\documentclass{article} 

\def\TEXPAD{0}

\usepackage{amsmath,amssymb,amsthm}
\usepackage[mathscr]{eucal}
\usepackage{mathtools}
\usepackage{fancyhdr}
\usepackage{cite}
\usepackage{hyperref}
\usepackage[capitalize, nameinlink]{cleveref}
\usepackage{enumitem}
\usepackage[usenames,dvipsnames]{xcolor}
\usepackage{tikz}
\usepackage[font=small,labelfont=bf, margin=.5in]{caption}

\colorlet{prettygreen}{ForestGreen!60!LimeGreen}

\usetikzlibrary{calc}
\tikzset{vtx/.style={circle, fill, inner sep=1.5pt}}
\tikzset{openvtx/.style={circle, draw, inner sep=1.5pt}}


\makeatletter
\newtheorem*{rep@theorem}{\rep@title}
\newcommand{\newreptheorem}[2]{%
\newenvironment{rep#1}[1]{%
 \def\rep@title{#2~\ref{##1}}%
 \begin{rep@theorem}}%
 {\end{rep@theorem}}}
\makeatother

\newtheorem{theorem}{Theorem}[section]
\newreptheorem{theorem}{Theorem}
\newtheorem{thm}[theorem]{Theorem}
\newtheorem{lemma}[theorem]{Lemma}
\newtheorem{prop}[theorem]{Proposition}
\newtheorem{corollary}[theorem]{Corollary}

\newtheorem*{claim*}{Claim}

\theoremstyle{definition}
\newtheorem{dfn}[theorem]{Definition}

\newtheorem{question}[theorem]{Question}
\newtheorem{conjecture}[theorem]{Conjecture}
\newtheorem{example}[theorem]{Example}

\theoremstyle{remark}
\newtheorem{rmk}[theorem]{Remark}

\newlist{thmenum}{enumerate}{3}
\setlist[thmenum,1]{label={(\roman*)}, ref={\thetheorem(\roman*)}}
\newlist{homtenum}{enumerate}{1}
\setlist[homtenum,1]{leftmargin=36pt}

\if\TEXPAD0
	\crefname{claim}{Claim}{Claims}
	\crefname{prop}{Proposition}{Propositions}
\fi

\textheight 8.75in
\topmargin -0.5in
\oddsidemargin 0.125in
\textwidth 6.25in

\newcommand{\I}{{\mathscr I}}

\newcommand{\Co}{{\mathfrak C}}

\newcommand{\EE}{{\mathcal E}}
\newcommand{\NN}{{\mathcal N}}

\newcommand{\al}{\alpha}

\newcommand{\del}{\delta}
\newcommand{\eps}{\epsilon}
\DeclareMathOperator{\BG}{BG}
\DeclareMathOperator{\cyc}{cyc}
\newcommand{\ccsim}{\overset{\diamond}{\sim}}
\newcommand{\ccl}[2][]{[#2]^\diamond_{#1}}
\newcommand{\dhom}{\delta_{\hom}}
\newcommand{\dchi}{\delta_{\chi}}
\newcommand{\dcyc}{\delta_{\cyc}}
\newcommand{\pieven}{\pi_1^\textsc{even}}
\DeclareMathOperator{\sd}{\bigtriangleup}
\newcommand{\td}{\widetilde}
\newcommand{\multiset}[1]{\left\lbrace \! \left\lbrace #1 \right\rbrace \! \right\rbrace}

\newcommand{\subarrow}{\ \xrightarrow{\text{\ sub\ }}\ }
\newcommand{\insarrow}{\xrightarrow{\text{\ ins\ }}}
\newcommand{\delarrow}{\xrightarrow{\text{\ del\ }}}
\newcommand{\homtarrow}[1]{\xrightarrow{\text{\ #1\ }}}

\title{Homotopy and the Homomorphism Threshold of Odd Cycles}
\author{Maya Sankar\thanks{Department of Mathematics, Stanford University, Stanford, CA 94305. Email: {\tt mayars@stanford.edu}. Research supported by NSF GRFP Grant DGE-1656518 and a Hertz Fellowship.}}
\date{\today}
\makeatletter
\let\mytitle\@title
\let\myauthor\@author
\makeatother
\pagestyle{fancy}

\lhead{\mytitle}
\rhead{Maya Sankar}
\cfoot{\arabic{page}}

\begin{document}
\maketitle

\begin{abstract}
	Consider a family $\mathcal F$ of $C_{2r+1}$-free graphs, where $r\geq 2$. Suppose that each graph in $\mathcal F$ has minimum degree linear in its number of vertices. Thomassen showed that such a family has bounded chromatic number, or, equivalently, that all graphs in $\mathcal F$ are homomorphic to a complete graph of bounded size. Considering instead homomorphic images which are themselves $C_{2r+1}$-free, we construct a family of dense $C_{2r+1}$-free graphs with no $C_{2r+1}$-free homomorphic image of bounded size. This provides the first nontrivial lower bound on the homomorphism threshold of odd cycles of length at least 5 and answers a question of Ebsen and Schacht.

	Our proof introduces a new technique to describe the topological structure of a graph. We establish a graph-theoretic analogue of homotopy equivalence, which allows us to analyze the relative placement of odd closed walks in a graph. This notion has unexpected connections to the neighborhood complex, leading to multiple interesting questions.
\end{abstract}

\section{Introduction}

A central question in graph theory is to understand those graphs containing no copy of some fixed subgraph $H$. In one of the first applications of the probabilistic method, Erd\H{o}s \cite{Er59} showed that there are $H$-free graphs of arbitrarily large chromatic number (so long as $H$ is not a forest). However, Erd\H{o}s's construction is inherently sparse.

A natural follow-up is to restrict one's focus to $H$-free graphs that are dense in some way, such as having large minimum degree. This interplay of chromatic number and minimum degree was first investigated in the case that  $H$ is a clique, almost fifty years ago. Given a graph $G$, let $|G|$ denote its number of vertices. Andr\'asfai, Erd\H os, and S\'os \cite{AnErSo74} showed that any $K_r$-free graph $G$ with minimum degree $\delta(G)> \frac{3r-7}{3r-4}|G|$ has chromatic number $\chi(G)< r$. Almost simultaneously, Erd\H os and Simonovits \cite{ErSi73} constructed a family of triangle-free graphs $G_n$ with unbounded chromatic number that satisfy $\delta(G_n)/|G_n|=\frac 13-o(1)$.

The results of \cite{ErSi73,AnErSo74} inaugurated the study of the chromatic threshold and, later, the homomorphism threshold. Both investigate the dependency of structural properties, such as the chromatic number, on the minimum degree of a dense $H$-free graph.

\subsection{Thresholds}\label{s:thresholds}

The chromatic and homomorphism thresholds provide different formalizations of the following question. Let $G$ be an $H$-free graph on $n$ vertices, with minimum degree at least $\alpha n$. For which fixed values of $\alpha$ can we provide guarantees about the structure of $G$?

The chromatic threshold asks for the infimum of those $\al$ for which $\chi(G)$ is bounded as a function of $\al$. More formally, the \emph{chromatic threshold} of a graph $H$ is the quantity
\begin{align*}
\dchi(H)=\inf\{\al\colon\exists\ &K=K(H,\al)\text{ such that}
\\&\text{any $H$-free graph $G$ with $\delta(G)\geq \al|G|$ satisfies $\chi(G)\leq K$}
\}.
\end{align*}
We remark that the chromatic threshold of any bipartite graph is 0, by the K\H{o}v\'ari--S\'{o}s--Tur\'{a}n theorem \cite{KoSoTu54}, and that by the Erd\H{o}s--Stone theorem \cite{ErSt46}, any $H$ with $\chi(H)=r\geq 3$ has chromatic threshold at most the Tur\'an density $\frac{r-2}{r-1}$ of $H$.

Aside from bipartite graphs, the first chromatic thresholds computed were those of cliques and odd cycles. Thomassen \cite{Th02} showed that $\dchi(K_3)=\frac 13$, matching the lower bound from \cite{ErSi73}. This was later extended by Goddard and Lyle \cite{GoLy11} and Nikiforov \cite{Ni10}, who computed $\dchi(K_r)$ for all $r\geq 3$. Thomassen \cite{Th07} additionally gave a very short proof that $\dchi(C_{2r+1})=0$ for all $r\geq 2$. Then, in 2010, \L uczak and Thomass\'e \cite{LuTh10} and Lyle \cite{Ly11} proved bounds on the chromatic thresholds of larger families of graphs. Building on these results, Allen, B\"ottcher, Griffiths, Kohayakawa, and Morris \cite{Aletal13} completely determined the chromatic threshold of any graph $H$.

The triangle-free graphs with $\del(G)>\frac 13|G|$ may be described more precisely in terms of graph homomorphisms. If $G$ and $\Gamma$ are graphs, a \emph{homomorphism} $\phi:G\to \Gamma$ is a map $V(G)\to V(\Gamma)$ such that for any edge $xy\in E(G)$, we have $\phi(xy):=\phi(x)\phi(y)\in E(\Gamma)$. If such a $\phi$ exists, we say $\Gamma$ is a \emph{homomorphic image} of $G$; similarly, if $\mathcal G$ is a family of graphs and $\Gamma$ is a homomorphic image of each $G\in\mathcal G$, we say $\Gamma$ is a \emph{homomorphic image} of $\mathcal G$.

 In 2006, \L uczak \cite{Lu06} showed that for every $\eps>0$, there is a finite triangle-free graph $\Gamma_\eps$ such that every triangle-free graph $G$ with minimum degree at least $(\frac 13+\eps)|G|$ has a homomorphism to $\Gamma_\eps$. Later, Brandt and Thomass\'e \cite{BrTh11} fully described the minimal such graphs $\Gamma_\eps$, showing that they are four-colorable and have $O\left(\frac 1\eps\right)$ vertices. This motivated the introduction of the \emph{homomorphism threshold} of a graph $H$, defined as
\begin{align*}
\dhom(H)=\inf\{\al\colon\exists\ &\text{an $H$-free graph $\Gamma=\Gamma(H,\al)$ such that}
\\&\text{any $H$-free graph $G$ with $\delta(G)\geq\al|G|$ has a homomorphism to $\Gamma$}
\}.
\end{align*}

We remark that $\dhom(H)\geq\dchi(H)$ for any graph $H$, because if there is a homomorphism $G\to\Gamma$ then $\chi(G)\leq\chi(\Gamma)$. \L uczak's work showed that this inequality is tight for $K_3$, i.e.\ that $\dhom(K_3)=\frac 13=\dchi(K_3)$. Goddard and Lyle \cite{GoLy11} and Oberkampf and Schacht \cite{ObSc20} extended this result to all cliques, showing that $\dhom(K_r)=\dchi(K_r)$ for any $r\geq 4$. There has also recently been some work on the homomorphism thresholds of odd cycles. Let $\mathcal C_{2r+1}=\{C_3,C_5,\ldots,C_{2r+1}\}$; the homomorphism threshold of a family $\mathcal H$ of forbidden subgraphs is defined analogously to that of a single graph $H$. It was shown that $\dhom(\mathcal C_{2r+1})=\frac{1}{2r+1}$ and $\dhom(C_{2r+1})\leq\frac 1{2r+1}$ by Letzter and Snyder \cite{LeSn19} (for $r=2$) and Ebsen and Schacht \cite{EbSc20} (for $r\geq 2$).

Computing the homomorphism thresholds of graphs other than cliques and bipartite graphs remains a challenging problem. An inherently related question is whether the inequality $\dhom(H)\geq\dchi(H)$ is ever strict. The results of \cite{LeSn19,EbSc20} partially resolved this, showing that the families $\mathcal H=\mathcal C_{2r+1}$ satisfy $\dhom(\mathcal H)>\dchi(\mathcal H)$ for $r\geq 2$. However, it remained unknown whether there were single graphs $H$ for which $\dhom(H)>\dchi(H)$, and Ebsen and Schacht \cite{EbSc20} explicitly asked whether this holds for $H=C_5$.

Our work answers their question, presenting the first nontrivial lower bound on $\dhom(C_{2r+1})$ for $r\geq 2$. 
\begin{thm}\label{thm:dhom>0}
For any $r\geq 2$, we have $\dhom(C_{2r+1})>0$. 
\end{thm}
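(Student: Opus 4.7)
The plan is to construct, for each $r \geq 2$, an infinite family $\mathcal{F} = \{G_m\}_{m \geq 1}$ of $C_{2r+1}$-free graphs, each with minimum degree at least $\alpha_r |G_m|$ for some fixed $\alpha_r > 0$, such that no bounded $C_{2r+1}$-free graph $\Gamma$ admits a homomorphism from every $G_m$. The strategy is to exhibit a topological invariant of $C_{2r+1}$-free graphs that is preserved by homomorphisms but unbounded on $\mathcal{F}$; the lower bound on $\dhom(C_{2r+1})$ will then follow by taking $\alpha = \alpha_r$.

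The invariant I would introduce is a graph-theoretic analogue of homotopy equivalence for odd closed walks. Given a $C_{2r+1}$-free graph $G$, declare two odd closed walks equivalent if one can be transformed into the other by a sequence of local moves --- for example, inserting or removing a backtrack, sliding a subwalk along a small even cycle, or replacing a short segment with an equivalent one --- each of which preserves $C_{2r+1}$-freeness of the ambient structure. The key observation should be that any homomorphism $\phi : G \to \Gamma$ of $C_{2r+1}$-free graphs carries equivalent walks to equivalent walks, since the local moves in $G$ descend to local moves in $\Gamma$. Consequently, the number of homotopy classes in $\Gamma$ upper-bounds the number realized by walks in $G$ (via their images), and if $\Gamma$ is bounded in size, so is its set of homotopy classes.

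With the invariant in hand, it remains to construct dense $C_{2r+1}$-free graphs with unboundedly many homotopy classes of odd closed walks. A natural prototype is a long odd-length cycle thickened into a dense graph in such a way that the only odd closed walks record a nonzero net winding number around the cycle. Walks with distinct winding numbers should then represent distinct homotopy classes. A direct blowup of a long cycle $C_{(2r+1)m}$ has only sublinear minimum degree, so a more intricate construction is needed --- likely a blowup or shell structure built on top of a carefully chosen base graph so that local density is linear while the global topology remains that of a long loop. One should verify that the blowup preserves $C_{2r+1}$-freeness and that odd closed walks genuinely cannot be homotoped between different winding numbers.

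The main obstacle is twofold. First, the definition of graph homotopy must be calibrated so as to be simultaneously (a) preserved by arbitrary homomorphisms of $C_{2r+1}$-free graphs, and (b) discriminating enough that not all odd walks collapse into a single class; any excess flexibility risks trivializing the invariant, while too little flexibility spoils homomorphism-invariance. Second, the construction must reconcile linear minimum degree, $C_{2r+1}$-freeness, and an unbounded number of homotopy classes. This is delicate because, by Thomassen's theorem, dense $C_{2r+1}$-free graphs already have bounded chromatic number, so the topological complexity cannot come from colorability and must be encoded in the finer combinatorial structure of odd walks --- presumably, as the abstract hints, via the neighborhood complex.
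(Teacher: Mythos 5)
Your high-level instinct to build a discrete homotopy theory and use its interaction with homomorphisms is the right direction, and your local moves (insertions, deletions, substitutions) essentially match the paper's Definition~\ref{dfn:homotopy}. But the concrete plan you build on top of it has a fatal gap, and the paper's argument runs in essentially the opposite direction.

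The pivotal claim in your proposal is that ``if $\Gamma$ is bounded in size, so is its set of homotopy classes.'' This is false. The discrete fundamental group of a finite graph can be infinite --- for instance, $\pi_1(C_5)\simeq\mathbb Z$, since in $C_5$ the substitution move has no effect (no two vertices at distance two have a second common neighbor), and insertion/deletion of backtracks cannot change winding number. More generally, a homomorphism $\phi:G\to\Gamma$ does induce a map on homotopy classes, but there is no reason for it to be injective, and the target can have arbitrarily many (indeed, infinitely many) classes; so comparing cardinalities of class sets gives no contradiction even if you could build a dense $C_{2r+1}$-free $G$ with many winding-number classes. (The constructive half of your plan also looks intractable: a ``long loop'' topology --- $\pi_1(G)\simeq\mathbb Z$ with large winding-number range --- and linear minimum degree pull against each other, and Thomassen's theorem means you cannot use chromatic number as a proxy for this complexity, exactly as you note; but that observation cuts against your own plan rather than supporting it.)

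The paper turns this around. Instead of constructing $G$ with \emph{many} homotopy classes, it takes $G$ \emph{simply connected} --- a single homotopy class per (parity, endpoint) pair --- together with \emph{large} chromatic number, obtained as a fine approximation to a Borsuk graph $\BG(8r^2-2,\pi/(2r+1))$. The homotopy theory then enters not through counting classes but through a family of homotopy \emph{invariants}: for each ``$C_4$-closure'' class $A=\ccl[\phi]{e}$ inside $G$ determined by the homomorphism $\phi:G\to H$, the parities $\I_A$, $\I_{A,u}$ are constant across all odd closed walks because $G$ is simply connected (\cref{cor:homt-loops}). This rigidity forces a partition $E(G)=A\sqcup B$ in which $B$ induces a bipartite graph, and $\chi(G)$ collapses to $\max(4,\chi(\ccl[H]{f}))$ for an edge $f$ on a short odd cycle of $H$ (\cref{keylem1}). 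A direct structural analysis of $C_{2r+1}$-free graphs then bounds $\chi(\ccl[H]{f})<8r^2$. Since the Borsuk approximation has $\chi\geq 8r^2$ and is dense (minimum degree $\approx\mu(n,\eps)|G|$), no bounded $C_{2r+1}$-free $H$ can be a homomorphic image, giving $\dhom(C_{2r+1})\geq\mu(8r^2-2,\pi/(2r+1))>0$. If you want to salvage your sketch, the missing ideas are: (i) aim for \emph{trivial} rather than rich topology in $G$, so that homotopy invariants become \emph{constraints} on the homomorphism; and (ii) convert those constraints into a chromatic bound that clashes with the Borsuk--Ulam lower bound for $\chi(G)$.
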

\noindent Recall that Thomassen \cite{Th07} showed $\dchi(C_{2r+1})=0$ for $r\geq 2$. Thus, these odd cycles are the first graphs $H$ known to satisfy $\dhom(H)>\dchi(H)$. An explicit lower bound on $\dhom(C_{2r+1})$ is stated in terms of the volume of a spherical cap in \cref{cor:bound-dhom}.

We remark that the results of \cite{Lu06,GoLy11,ObSc20,LeSn19,EbSc20} focus on providing upper bounds for $\dhom$. The lower bound $\dhom(\mathcal C_{2r+1})\geq\frac 1{2r+1}$ is achieved without difficulty via a sequence $G_1,G_2,\ldots$ of increasingly good approximations to a Borsuk graph (defined in \cref{s:borsukintro} below), using arguments akin to those in \cref{s:borsuk}. One could alternatively choose the approximations $G_i$ a little more carefully, as in \cite{EbSc20}, so that any two vertices lie on a common $C_{2r+3}$ --- immediately implying that any proper homomorphic image of $G_i$ contains a shorter odd cycle.

Our work marks the first more elaborate technique for lower-bounding homomorphism thresholds. We introduce a new tool to describe the topological structure of a graph. Then, letting $G$ be a graph with appropriate topological properties, we reduce the study of $C_{2r+1}$-free homomorphic images of $G$ to the more tractable analysis of certain small $C_{2r+1}$-free graphs. For some families $\mathcal G$ of dense $C_{2r+1}$-free graphs, these techniques show that all $C_{2r+1}$-free homomorphic images of any $G\in\mathcal G$ have size at least some increasing function of $|G|$, thereby proving \cref{thm:dhom>0}.

\subsection{The Borsuk Graph and Topological Methods}\label{s:borsukintro}

Our topological methods are inspired by Borsuk graphs, infinite graphs constructed on the $n$-dimensional unit sphere $S^n$. These were first introduced by Erd\H os and Hajnal \cite{ErHa67}, and their properties have since been studied in many different contexts \cite{KaMa20,Lo83,SiTa11}. We seek to introduce graph properties that, when applied to Borsuk graphs, capture the underlying topological structure of $S^n$.

Given $n\geq 1$ and $\eps>0$, the Borsuk graph $\BG(n,\eps)$ is an infinite graph with vertex set $S^n$. We place an edge between vertices $x,y\in S^n$ if the distance between them on the sphere is greater than $\pi-\eps$. Equivalently, the neighborhood $N(x)$ of any vertex $x\in S^n$ is a ball of radius $\eps$ around its antipode $-x$. 

The Borsuk graph appears frequently in lower-bound constructions for chromatic and homomorphism thresholds and other extremal quantities \cite{Aletal13,BoEr76,EbSc20,LeSn19,LuTh10}. In addition to being dense, its useful properties include having large chromatic number and lacking short odd cycles. More specifically, $\BG(n,\eps)$ has no odd cycles of length at most $1/\eps$ and has chromatic number at least $n+2$; it is well-known (see \cite{ErHa67,Lo83}) that the latter attribute is equivalent to the Borsuk--Ulam theorem.

The Borsuk graph also inspired Lov\'asz's groundbreaking proof of the Kneser conjecture \cite{Lo78}, one of the first applications of topological methods to combinatorics. Lov\'asz introduced the \emph{neighborhood complex} $\NN(G)$ of a graph $G$, a simplicial complex which, when $G=\BG(n,\eps)$, captures some of the topological structure of $S^n$. He then bounded $\chi(G)$ in terms of topological properties of $\NN(G)$, as follows. We say a topological space $X$ is \emph{$k$-connected} if for each $\ell=0,1,\ldots,k$, any continuous map $S^\ell\to X$ extends continuously to a map from the entire ball $B^{\ell+1}$.

\begin{thm}[Lov\'asz \cite{Lo78}]\label{thm:lovasz}
Let $G$ be a graph whose neighborhood complex $\NN(G)$ is $k$-connected. Then $\chi(G)\geq k+3$.
\end{thm}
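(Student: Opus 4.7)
The plan is to prove the contrapositive: assuming $\chi(G)\le k+2$, I will derive that $\NN(G)$ is not $k$-connected. The key tool is the equivariant Borsuk--Ulam theorem, which forbids a $\mathbb Z_2$-equivariant continuous map from a $k$-connected free $\mathbb Z_2$-space to $S^k$. Since $\NN(G)$ carries no obvious free involution, the strategy is to route the argument through a companion ``box complex'' $\mathcal B(G)$ that does.

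Concretely, I would define $\mathcal B(G)$ to have as simplices the ordered pairs $(A,B)$ of disjoint nonempty subsets of $V(G)$ such that every $a\in A$ is adjacent to every $b\in B$, with the free $\mathbb Z_2$-action $(A,B)\leftrightarrow(B,A)$. The proof then has three pieces. First, I would transfer connectivity: if $|\NN(G)|$ is $k$-connected then so is $|\mathcal B(G)|$. The natural projection $(A,B)\mapsto A$ from $\mathcal B(G)$ to $\NN(G)$ has fibers parametrized by tuples of common neighbors (hence contractible), so a nerve- or Quillen fiber-lemma argument should upgrade $k$-connectedness from base to total space. Second, a proper coloring $c\colon V(G)\to[k+2]$ induces a simplicial $\mathbb Z_2$-map $\mathcal B(G)\to\mathcal B(K_{k+2})$ by $(A,B)\mapsto (c(A),c(B))$; propriety of $c$, together with the full bipartite adjacency between $A$ and $B$, forces $c(A)\cap c(B)=\emptyset$, so the image is a valid simplex. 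Third, a direct ``deleted join'' computation identifies $|\mathcal B(K_{k+2})|$ with the antipodal sphere $S^k$ up to $\mathbb Z_2$-homotopy equivalence.

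Composing these gives a $\mathbb Z_2$-equivariant continuous map from a $k$-connected free $\mathbb Z_2$-space to $S^k$, contradicting Borsuk--Ulam and completing the argument. I expect the main obstacle to be the connectivity-transfer step: the other two pieces are either definitional unwinding or a small black-box appeal to topology, but relating $|\mathcal B(G)|$ to $|\NN(G)|$ requires real topological bookkeeping, including verifying that the relevant preimages are genuinely contractible and that the nerve lemma applies in the precise form needed. A secondary subtlety is pinning down the right flavour of box complex --- several variants appear in the literature, and the connectivity transfer and sphere identification must be compatible with the same choice.
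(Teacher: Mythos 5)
The paper does not prove this theorem --- it is cited directly from Lov\'asz's 1978 paper [Lo78] and used as a black box. So there is no proof in the paper to compare against, and the question is only whether your blind sketch is sound.

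Your sketch is essentially correct, and it is the standard modern proof via the box/Hom complex. The three pieces you describe are all genuine and all provable. For the connectivity-transfer step, the poset map $(A,B)\mapsto A$ from the face poset of $\mathcal B(G)$ to that of $\NN(G)$ does satisfy the hypotheses of Quillen's fiber lemma: for any face $A_0$ of $\NN(G)$, the retraction $(A,B)\mapsto(A_0,B)$ on $p^{-1}(Q_{\geq A_0})$ is a monotone closure-type operator whose image is the poset of nonempty subsets of $N(A_0)\setminus A_0$, which is a simplex (nonempty since $A_0$ is a face of $\NN(G)$), hence contractible. This yields a (non-equivariant) homotopy equivalence $|\mathcal B(G)|\simeq|\NN(G)|$, and since connectivity is a homotopy invariant the transfer goes through. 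Your second and third pieces are straightforward: disjointness of $c(A)$ and $c(B)$ follows exactly as you say from properness and full bipartite adjacency, and $\mathcal B(K_{k+2})$ is the deleted join realizing $S^k$ with the antipodal action. Composing then gives a $\mathbb Z_2$-map from a $k$-connected free $\mathbb Z_2$-space to $S^k$; since any $k$-connected free $\mathbb Z_2$-space admits a $\mathbb Z_2$-map from $S^{k+1}$, this contradicts Borsuk--Ulam.

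For comparison: Lov\'asz's original 1978 argument did not introduce the box complex (which was formalized later by Alon--Frankl--Lov\'asz and Matou\v{s}ek--Ziegler). Instead he worked directly with a subcomplex $\mathcal L(G)$ of the barycentric subdivision of $\NN(G)$ cut out by the common-neighborhood operator $A\mapsto N(A)$, showed $\mathcal L(G)$ carries a free $\mathbb Z_2$-action and is a deformation retract of $\mathrm{sd}(\NN(G))$, and then mapped $\mathcal L(G)$ to a sphere via the coloring. Your route achieves the same effect with a cleaner intermediate object, at the cost of having to justify $|\mathcal B(G)|\simeq|\NN(G)|$ rather than the retraction onto $\mathcal L(G)$; both are instances of the same underlying discrete Morse/fiber-lemma technology. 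You are right that the connectivity transfer is the step requiring real care, and right to flag that the box-complex variant must be chosen consistently across all three pieces (your variant, with both parts nonempty, is the one usually written $B_0(G)$ or $\mathrm{Hom}(K_2,G)$, and it works for all three steps as you have set them up).
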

\noindent Lov\'asz's proof of the Kneser conjecture concluded by showing that the neighborhood complex of a Kneser graph had the desired connectivity.

Our work introduces a new tool to describe the topological structure of a graph. We introduce an equivalence relation on the set of walks in a connected graph $G$, which we call \emph{(discrete) homotopy equivalence} in analogy to the algebraic topology setting. This detects topological properties similar to those of the neighborhood complex $\NN(G)$. One may define a \emph{discrete fundamental group} $\pi_1(G)$, consisting of homotopy classes of those closed walks based at some fixed $v\in V(G)$. When $G$ is nonbipartite --- a condition ensuring that $\NN(G)$ is connected --- we prove (see \cref{prop:pi1-NG}) that the fundamental group of the topological space $\NN(G)$ is isomorphic to the index-2 subgroup $\pieven(G)\subseteq\pi_1(G)$ comprising closed walks of even length.

Although discrete homotopy and the neighborhood complex are interrelated, the former is distinctly better suited for our application. Because $\pi_1(\NN(G))\simeq\pieven(G)$, studying $\NN(G)$ inherently limits our perspective to even closed walks, whereas analyzing interactions between odd cycles is our goal. Additionally, due to its purely combinatorial definition, discrete homotopy can be simpler to reason about.

At the heart of our argument is the following theorem, from which we deduce \cref{thm:dhom>0}. We say a connected graph $G$ is \emph{simply connected} (see \cref{dfn:simp-conn}) if walks in $G$ under discrete homotopy behave like continuous paths in a simply connected topological space under homotopy. An equivalent formulation, discussed in \cref{s:topology}, is that $G$ is simply connected if and only if $\pieven(G)$ is trivial.
\begin{theorem}\label{mainthm}
Let $G$ be a simply connected graph and fix $r\geq 2$. Suppose $\phi:G\to H$ is a graph homomorphism such that $H$ is $C_{2r+1}$-free, and suppose further that some odd cycle $C$ of $G$ satisfies $|\phi(V(C))|\leq 2r+2$. Then $\chi(G)<8r^2$.
\end{theorem}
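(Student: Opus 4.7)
Set $B := \phi(V(C))$ and $H_0 := H[B]$. Then $H_0$ is a $C_{2r+1}$-free graph on at most $2r+2$ vertices that contains an odd closed walk (namely $\phi(C)$), so its odd girth is at most $2r-1$. My goal is to exploit the simple connectedness of $G$ to produce a graph homomorphism $\psi \colon G \to \Gamma$ into a small auxiliary graph $\Gamma$ of size $O(r^2)$, which then gives $\chi(G) \le |V(\Gamma)| < 8r^2$.

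\textbf{Construction of the homomorphism.}
Fix $v_0 \in V(C)$ and for each $v \in V(G)$ choose a walk $W_v$ from $v_0$ to $v$ in $G$; the candidate map is
\[
  \psi(v) := \bigl( \phi(v),\, [\phi(W_v)] \bigr),
\]
where the second coordinate lies in a finite quotient $Q$ of the set of walks in $H_0$ ending at $\phi(v)$. Any two choices $W_v, W_v'$ differ by a closed walk at $v_0$ whose class in $\pi_1(G)$ is either trivial or equal to $[C]$ --- this is exactly the content of $G$ being simply connected (equivalently $\pieven(G) = 1$, so $\pi_1(G) \cong \mathbb Z/2$ generated by $[C]$). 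Pushing forward by $\phi$, we see $\psi$ is well-defined provided $\phi(C)$ becomes trivial in $Q$. I therefore take $Q$ to be walks in $H_0$ modulo discrete homotopy in $H_0$ together with the additional relation of sliding across the short odd cycle $C_0 \subseteq H_0$ (of length at most $2r-1$) to which $\phi(C)$ is homotopic inside $H_0$. Edges of $\Gamma$ are defined so that the edge relation of $H_0$ lifts naturally, making $\psi$ a graph homomorphism.

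\textbf{Bound and main obstacle.}
Counting relevant classes gives $|V(\Gamma)| \le (2r+2)(2r-1) < 8r^2$, yielding $\chi(G) \le |V(\Gamma)| < 8r^2$. The hard step is precisely characterizing the quotient $Q$ and showing it is both finite and compatible with the edge-structure of $H_0$: a priori $\pi_1(H_0)$ could be infinite, so one must use $C_{2r+1}$-freeness of $H_0$ together with $|V(H_0)| \le 2r+2$ to control homotopy classes of short walks. My plan is first to show that $\phi(C)$ is discretely homotopic inside $H_0$ to a short odd cycle $C_0$ (using that $H_0$ has odd girth at most $2r-1$ together with a small case analysis on $C_{2r+1}$-free graphs with at most $2r+2$ vertices), and then to show that after quotienting by $C_0$ the remaining fundamental data has size at most $2r-1$, essentially because $H_0/C_0$ is simply connected. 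This is the technical heart of the argument and where the discrete homotopy framework developed earlier in the paper enters most heavily.
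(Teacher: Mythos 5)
Your strategy---building a homomorphism $\psi(v)=(\phi(v),[\phi(W_v)])$ into a small auxiliary graph by tracking homotopy classes of walks, in the style of a covering-space argument---is genuinely different from the paper's proof, but as written it has gaps that are not merely technical. First, the bookkeeping of the target is off: $W_v$ is a walk in $G$ from $v_0$ to an arbitrary vertex $v$, so $\phi(W_v)$ is a walk in $H$, not in $H_0=H[\phi(V(C))]$, and $\phi(v)$ ranges over all of $V(H)$, which is unbounded; the count $|V(\Gamma)|\le(2r+2)(2r-1)$ therefore does not follow from anything in the construction. Second, and more seriously, the finiteness of the quotient $Q$ is essentially the whole content of the theorem and is not established. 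The set of discrete-homotopy classes of walks in $H$ from $\phi(v_0)$ to a fixed endpoint is a torsor over $\pi_1(H)$, which may well be infinite, and imposing the single relation that $[\phi(C)]$ (or a short odd cycle) becomes trivial does not in general yield a finite quotient---you would need $\phi_*(\pi_1(G))$ to have finite index in $\pi_1(H)$, for which there is no reason whatsoever. The auxiliary claims that $\phi(C)$ is discretely homotopic in $H_0$ to a short odd cycle and that ``$H_0/C_0$ is simply connected'' are likewise undefined or unproven, and these are exactly the places where the $C_{2r+1}$-freeness of $H$ must do real work.

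The paper sidesteps all of this by never trying to control $\pi_1(H)$ itself: it only extracts $\mathbb Z/2\mathbb Z$-valued homotopy invariants of walks in $G$, the intersection invariants attached to the $C_4$-closures $\ccl[\phi]{e}$. Simple connectedness forces these invariants to agree on all odd closed walks, which produces an edge set $A=\ccl[\phi]{e}$ with $G\setminus A$ bipartite and gives $\chi(G)\le\max\left(4,\chi\left(\ccl[H]{f}\right)\right)$ for an edge $f=\phi(e)$ lying on an odd cycle of length at most $2r-1$ (\cref{keylem1}). The rest of the argument is purely about the $C_{2r+1}$-free graph $H$: an ear-decomposition argument (\cref{keylem2-weak}) shows every edge of $\ccl[H]{f}$ lies on a short odd cycle meeting that short cycle, and a degeneracy argument (\cref{keylem3}) bounds the chromatic number of bounded-radius balls, giving $\chi\left(\ccl[H]{f}\right)<8r^2$. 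If you want to salvage your approach, the missing idea is a finite, edge-compatible quotient of the walk-class structure; the $\mathbb Z/2\mathbb Z$ intersection invariants are precisely such a quotient, and finding one without them is where your sketch currently has no substitute.
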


Consider the $C_{2r+1}$-free Borsuk graph $\BG\left(8r^2-2,\frac{\pi}{2r+1}\right)$, which has chromatic number at least $8r^2$. Applying \cref{mainthm}, we show that any family of arbitrarily good finite approximations to this graph has no $C_{2r+1}$-free homomorphic image of bounded size. This allows us to lower-bound $\dhom(C_{2r+1})$, proving \cref{thm:dhom>0}.

\paragraph*{Organization.} In \cref{s:homotopy}, we introduce discrete homotopy and other related preliminaries. In \cref{s:mainthm-reduction}, we reduce the proof of \cref{mainthm} to studying a well-connected subgraph of the $C_{2r+1}$-free graph $H$; we then complete the proof of \cref{mainthm} in \cref{s:mainthm-pf}. We deduce \cref{thm:dhom>0} from \cref{mainthm} in \cref{s:borsuk} by analyzing approximations to Borsuk graphs. \cref{s:topology} builds out our topological framework, defining the discrete fundamental group and proving that $\pieven(G)\simeq\pi_1(\NN(G))$ when $G$ is non-bipartite. Lastly, we conclude by discussing potential extensions of \cref{mainthm} and some topological open problems in \cref{s:conclusion}.

\paragraph*{Notation.} If $G$ is a graph, we write $|G|$ for its number of vertices and, for any $v\in V(G)$, denote its neighborhood by $N(v)$.
In \cref{s:homotopy,s:mainthm-reduction}, we frequently make use of multisets. We use double braces $\multiset\cdot$ to denote a multiset, and, to further aid the reader, we establish the convention that multisets are always denoted by variables with tildes. We write $|\td A|$ for the cardinality of a multiset (taken with multiplicity) and write $\td A_1+\td A_2$ for the sum of two multisets. Additionally, if $\td A$ is a multiset and $B$ a set, let $\td A\cap B$ denote the multiset containing each $b\in B$ with multiplicity equal to its multiplicity in $\td A$.

\section{Homotopy and Invariants}\label{s:homotopy}

In this section, we define an equivalence relation, homotopy equivalence, on walks in a graph $G$. We also introduce a family of functions --- called intersection invariants --- on walks in $G$, and prove they are homotopy-invariant.

Let $G$ be a graph. A \emph{walk} of length $k\geq 0$ in $G$ is a sequence $P=v_0\cdots v_k$ of $k+1$ vertices such that $v_iv_{i+1}$ is an edge of $G$ for each $0\leq i < k$. We say $P$ is a \emph{closed walk} if its endpoints $v_0$ and $v_k$ are equal. We describe $P$ as \emph{odd} or \emph{even} depending on the parity of its length $k$, and abbreviate this as the \emph{parity} of the walk $P$. We write $\td E(P)$ for the multiset of edges of $P$ counted with multiplicity; that is,
\[
\td E(P)=\multiset{v_iv_{i+1}:0\leq i<k}.
\]

There are two key differences between walks and paths. Unlike paths, walks may repeat vertices. Moreover, walks are \emph{oriented} --- in general, $v_0\cdots v_k$ and $v_k\cdots v_0$ are different walks. The same distinctions hold between closed walks and cycles.

In a topological space, two continuous paths are homotopic if one may be continuously deformed into the other. We introduce a discrete analogue of this concept, in which two walks of a graph are homotopic if one may be transformed into the other via a sequence of small perturbations.

\begin{dfn}\label{dfn:homotopy}
Let $G$ be a graph. We say two walks $P$ and $P'$ of $G$ are \emph{homotopic} or \emph{homotopy equivalent} if we may transform $P$ into $P'$ via a finite sequence of substitution, insertion, and deletion steps, defined as follows.
\begin{homtenum}
\item[(Sub)] Given a walk $v_0\cdots v_k$ and an index $0<i<k$, replace $v_i$ with $v'_i$ for some $v'_i\in N(v_{i-1})\cap N(v_{i+1})$. 
\item[(Ins)] Given a walk $v_0\cdots v_k$ and an index $0\leq i\leq k$, replace $v_i$ with $v_iwv_i$ for some $w\in N(v_i)$.
\item[(Del)] Given a walk $v_0\cdots v_k$ and an index $0<i<k$ satisfying $v_{i-1}=v_{i+1}$, replace $v_{i-1}v_iv_{i+1}$ with $v_{i-1}$.
\end{homtenum}
\end{dfn}

Notice that these operations are invertible: we can undo a substitution step via another substitution step, and similarly insertions and deletions are each other's inverses. Thus, homotopy does give an equivalence relation on walks in $G$.

\newcommand{\x}[1]{{\color{red}#1}}
\begin{example}
Let $G$ be the 7-vertex graph shown below. Then the two closed walks $v_0v_1v_2v_0$ and $v_0v_5v_6v_0$ are homotopic, via the sequence of operations given.
\[\begin{tikzpicture}[scale=0.7]
	\foreach \x/\y [count=\i from 0]
		in {-0.5/0,1/1,2/1,1/0,2/0,1/-1,2/-1}
		\coordinate[vtx](V\i) at (\x,\y);
	\foreach \pos [count=\i from 0]
		in {left, above right, above right, above right, above right, below right, below right}
		\node at (V\i) [\pos] {$v_\i$};
	\draw (V1) -- (V5) -- (V6) -- (V2) -- (V1);
	\draw (V3) -- (V4);
	\foreach \i in {1,2,5,6}
		\draw (V0) -- (V\i);
		
	\draw (4,0) node[right] {
	$\begin{array}{l}
		\ v_0v_1v_2v_0
		\insarrow v_0v_1\x{v_2v_4v_2}v_0
		\subarrow v_0v_1\x{v_3}v_4v_2v_0
		\subarrow v_0\x{v_5}v_3v_4v_2v_0
		\\[5pt]\hphantom{v_0v_1v_2v_0}\subarrow v_0v_5v_3v_4\x{v_6}v_0
		\subarrow v_0v_5v_3\x{v_5}v_6v_0
		\delarrow v_0\x{v_5}v_6v_0
	\end{array}$
	};
		
\end{tikzpicture}\]
\end{example}

We state two homotopy invariants which are immediately clear from the definition.

\begin{prop}\label{prop:homt-trivial}
Let $G$ be a graph and suppose that the walks $P=v_0\cdots v_k$ and $P'=w_0\cdots w_\ell$ are homotopic in $G$. Then,
\begin{thmenum}
\item $P$ and $P'$ have the same endpoints, i.e.\ $v_0=w_0$ and $v_k=w_\ell$.
\item We have that $k\equiv\ell$ (mod 2).\label[prop]{prop:homt-trivial-parity}
\end{thmenum}
\end{prop}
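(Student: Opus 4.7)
The plan is to proceed directly from the definition: by \cref{dfn:homotopy}, if $P$ and $P'$ are homotopic then there is a finite sequence of walks $P=P_0,P_1,\ldots,P_m=P'$ such that each $P_{j+1}$ is obtained from $P_j$ by a single (Sub), (Ins), or (Del) step. Hence it suffices to verify that each of the three elementary operations preserves both endpoints and the parity of the length; the proposition then follows by induction on $m$.

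I would check the three cases in turn. For (Sub), the walk $v_0\cdots v_k$ is replaced by a walk of the same length $k$ in which only an \emph{interior} vertex $v_i$ (with $0<i<k$) is altered, so the endpoints $v_0,v_k$ and the length are literally unchanged. For (Ins), the walk $v_0\cdots v_k$ becomes a walk of length $k+2$ (so the parity is preserved); if $0<i<k$ the endpoints are untouched, while if $i=0$ the first vertex $v_0$ is replaced by $v_0wv_0$, so the new walk still begins at $v_0$, and symmetrically at $i=k$ the walk still ends at $v_k$. For (Del), the walk shortens by $2$ (preserving parity) and the deletion is only allowed at an interior index $0<i<k$, so again the endpoints are preserved.

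Combining these observations, each step in the homotopy sequence preserves the ordered pair of endpoints and the parity of the length. A straightforward induction on $m$ then yields both (i) and (ii). There is no real obstacle here: the only mild subtlety is remembering that (Ins) is allowed at the endpoints $i=0$ or $i=k$, where one must explicitly observe that the inserted detour $v_iwv_i$ still begins (resp.\ ends) at the original endpoint $v_i$; every other case is immediate from the fact that the modifications in (Sub) and (Del) touch only interior vertices.
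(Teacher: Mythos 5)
Your proof is correct and matches the paper's intent exactly; the paper simply asserts these facts as ``immediately clear from the definition'' without spelling out the case check. Your careful verification that (Sub), (Ins), and (Del) each preserve the endpoint pair and length parity, followed by induction on the number of elementary steps, is precisely the argument being left to the reader.
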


To design more informative homotopy invariants, we need some additional notation. Revisiting \cref{dfn:homotopy}, notice that concatenating the length-2 paths removed and added in (Sub) forms a closed walk $v_{i-1}v_iv_{i+1}v'_iv_{i-1}$ of length 4. This motivates studying how the edges of a graph connect via $C_4$'s.

Let $H$ be a graph. Let $\ccsim$ be the equivalence relation on $E(H)$ generated by the relations
	\[\{e\ccsim e'\colon e,e'\in E(H)\text{ contained in the same $C_4$}\}.
	\]
For any edge $e\in E(H)$, we define its \emph{$C_4$-closure}, written $\ccl[H]e$, to be the equivalence class of $e$ under $\ccsim$.

Now, fix a graph homomorphism $\phi:G\to H$. Given $e\in E(G)$ and its image $f=\phi(e)\in E(H)$, let $G'$ be the subgraph of $G$ induced by the edge set $\phi^{-1}\left(\ccl[H]f\right)$. Define $\ccl[\phi]e$ to be the edge set of the connected component of $G'$ containing $e$. We remark that the sets $\left\{\ccl[\phi]e:e\in E(G)\right\}$ partition $E(G)$.

Say a set $A\subseteq E(G)$ is \emph{$\phi$-stable} if it may be expressed as a union of sets of the form $\ccl[\phi]e$. Equivalently, $A$ is $\phi$-stable if for any edge $e\in A$, we have $\ccl[\phi]e\subseteq A$. Given a $\phi$-stable set $A$ and a vertex $u\in V(H)$, we also define
	\[
	A_u:=\left\{e\in A:e\text{ has an endpoint in }\phi^{-1}(u)\right\}.
	\]

Using this notation, we define our homotopy invariants.

\begin{dfn}\label{dfn:inv}
Fix a graph homomorphism $\phi:G\to H$. The \emph{intersection invariants} associated to $\phi$ are families of functions $\I_A$ and $\I_{A,u}$ for each $\phi$-stable subset $A\subseteq E(G)$ and each vertex $u\in V(H)$. Both $\I_A$ and $\I_{A,u}$ map multisets of edges of $E(G)$ to $\mathbb Z/2\mathbb Z$, and are defined as
\[\I_A(\td F):=|\td F\cap A|\pmod 2\quad\text{ and }\quad\I_{A,u}(\td F):=|\td F\cap A_u|\pmod 2,\]
recalling that the cardinalities of the multisets $\td F\cap A$ and $\td F\cap A_u$ are taken with multiplicity.
When we want to specify the $\phi$-stable set $A$, we refer to the \emph{$A$-intersection invariants} $\I_A$ and $\I_{A,u}$.

\end{dfn}

Given an intersection invariant $\I$ associated to $\phi:G\to H$ and a walk $P$ of $G$, we define $\I(P):=\I(\td E(P))$. We will now show that intersection invariants are in fact homotopy invariants --- if $P$ and $P'$ are homotopic walks in $G$, then $\I(P)=\I(P')$.

\begin{prop}\label{prop:hom-inv-helper}
Fix a graph homomorphism $\phi:G\to H$. Let $\I$ be an intersection invariant associated to $\phi$ of either form $\I_A$ or $\I_{A,u}$.
\begin{thmenum}
\item Suppose $\td E_1$ and $\td E_2$ are multisets of edges of $E(G)$. Then $\I(\td E_1+\td E_2)=\I(\td E_1)+\I(\td E_2)$.\label[prop]{prop:sum-inv}
\item Let $P=v_0v_1v_0$ be a closed walk of length 2 in $G$. Then $\I(P)=0$.
\item Let $P=v_0v_1v_2v_3v_0$ be a closed walk of length 4 in $G$. Then $\I(P)=0$.
\end{thmenum}
\end{prop}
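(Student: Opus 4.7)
The plan is as follows. Part (i) follows directly from the distributivity of multiset intersection over multiset sum: for any set $X$, we have $|(\td E_1 + \td E_2) \cap X| = |\td E_1 \cap X| + |\td E_2 \cap X|$. Applying this with $X = A$ and $X = A_u$, then reducing modulo 2, yields the additivity. Part (ii) is equally direct: the length-2 walk $v_0 v_1 v_0$ has edge multiset $\multiset{v_0 v_1, v_0 v_1}$, so $|\td E(P) \cap X| \in \{0, 2\}$ for every $X$, giving $\I(P) = 0$.

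For part (iii), I would first dispose of the case in which two non-adjacent vertices of $P$ coincide in $G$. Since consecutive vertices of a walk are distinct, the only possible coincidences are $v_0 = v_2$ or $v_1 = v_3$. In either case, each distinct edge of $P$ appears with multiplicity $2$ in $\td E(P)$, so every intersection $|\td E(P) \cap X|$ is automatically even; equivalently, $P$ decomposes as a concatenation of two length-2 closed walks and parts (i) and (ii) finish the argument.

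In the main case, $v_0, v_1, v_2, v_3$ are all distinct in $G$, so the four edges $e_i = v_{i-1} v_i$ (indices mod $4$) form a genuine $4$-cycle. The image walk $\phi(P)$ has length $4$ in $H$, and I split into subcases based on whether it is itself a $C_4$. If the $\phi(v_i)$ are all distinct, then $\phi(e_1), \phi(e_2), \phi(e_3), \phi(e_4)$ all lie in the common $C_4$-closure class $\ccl[H]{\phi(e_1)}$; moreover, the edges $e_1, \ldots, e_4$ form a connected subgraph of the preimage of this class in $G$, so they share a single $\ccl[\phi]$-class. Consequently $|\td E(P) \cap A|$ is $0$ or $4$, and since at most one $\phi(v_j)$ can equal a given $u$, the count $|\td E(P) \cap A_u|$ is $0$ or $2$. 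In the remaining subcase, without loss of generality $\phi(v_0) = \phi(v_2)$, and then $\phi(e_1) = \phi(e_2)$ and $\phi(e_3) = \phi(e_4)$ as edges of $H$. Since $e_1$ and $e_2$ share $v_1$ in $G$ and have the same $\phi$-image, they lie in a common $\ccl[\phi]$-class; likewise $\{e_3, e_4\}$ forms a single class. Each pair contributes $0$ or $2$ edges to any $\phi$-stable $A$, so $|\td E(P) \cap A|$ is even, and since the two edges in each pair present the same multiset of image-endpoints, they are simultaneously in $A_u$ or not.

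The main obstacle is the bookkeeping in part (iii), specifically verifying that the pairing of edges into $\ccl[\phi]$-classes in the degenerate image case is compatible with incidence to $\phi^{-1}(u)$. This ultimately follows because any two edges of $G$ with identical image in $H$ automatically present matching $\phi$-images at their endpoints, so they cannot separate in the $\I_{A,u}$ computation.
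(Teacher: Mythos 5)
Your proof is correct and follows essentially the same route as the paper's: parts (i) and (ii) are identical, and for (iii) the paper likewise splits on whether the images $\phi(v_0),\ldots,\phi(v_3)$ are distinct, using the $C_4$ in $H$ to place all four edges in one $\ccl[\phi]$-class in the first case and pairing adjacent edges with equal $\phi$-image in the second. Your preliminary case on coincidences among the $v_i$ in $G$ itself is harmless but redundant, since the degenerate-image argument already covers it.
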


\begin{proof}
Recall from \cref{dfn:inv} that $\I$ is defined as $\I(\td F)\equiv |\td F\cap A'|\pmod 2$ for some $A'\subseteq E(G)$. Then (i) immediately follows from the identity
\[
|(\td E_1+\td E_2)\cap A'|=|\td E_1\cap A'|+|\td E_2\cap A'|.
\]
For (ii), observe $\td E(P)=\multiset{v_0v_1,v_1v_0}$ repeats a single edge twice. Thus $|\td E(P)\cap A'|$ is either 0 or 2, depending on whether $v_0v_1\in A'$, yielding $\I(P)=0$.

It remains to show (iii). Fix a $\phi$-stable set $A\subseteq E(G)$ and suppose that $\I$ is an $A$-intersection invariant. For $i=0,\ldots,3$, set $u_i=\phi(v_i)$. We will use casework on whether or not the four vertices $u_0,\ldots,u_3$ are distinct.

First, suppose that $u_0,\ldots,u_3$ are distinct vertices of $H$, so that $u_0u_1u_2u_3$ is a $C_4$. Then, $u_0u_1\ccsim u_1u_2\ccsim u_2u_3\ccsim u_3u_0$ in $H$, so the four edges of $\td E(P)$ are in the same connected component of $\phi^{-1}\left(\ccl[H]{u_0u_1}\right)$. It follows that $\td E(P)\cap A$ is either $\td E(P)$ or empty, and thus that $\I_A(P)=0$. Now fix a vertex $u\in V(H)$; we will show $\I_{A,u}(P)=0$. We observe that $\td E(P)\cap A_u$ is only nonempty if $u\in\{u_0,\ldots,u_3\}$, in which case exactly two edges of $\td E(P)$ will have an endpoint in $\phi^{-1}(u)$. It follows that $|E(P)\cap A_u|$ is either 0 or 2, so $\I_{A,u}(P)=0$. Thus, $\I(P)=0$ if $u_0,\ldots,u_3$ are distinct, whether $\I$ has the form $\I_A$ or $\I_{A,u}$.

Lastly, we show (iii) in the case that the vertices $u_0,\ldots,u_3$ are not distinct. Without loss of generality, suppose $u_0=u_2$. Consider the edges $e_0=v_0v_1$ and $e_1=v_1v_2$. We observe that $e_1\in \ccl[\phi]{e_0}$ because $\phi(e_0)=\phi(e_1)$ and the two edges $e_0$ and $e_1$ are adjacent. Thus, $e_0\in A$ if and only if $e_1\in A$. Now, let $u$ be any vertex of $H$. Because $\phi(e_0)=\phi(e_1)$, the edge $e_0$ has an endpoint in $\phi^{-1}(u)$ if and only if $e_1$ does. It follows that $e_0\in A_u$ if and only if $e_1\in A_u$. Letting $e_2=v_2v_3$ and $e_3=v_3v_0$, an analogous argument shows that $e_2\in A$ if and only if $e_3\in A$, and that $e_2\in A_u$ if and only if $e_3\in A_u$. Thus, the sets
\[
\td E(P)\cap A=\multiset{e_0,e_1,e_2,e_3}\cap A
\quad\text{ and }\quad
\td E(P)\cap A_u=\multiset{e_0,e_1,e_2,e_3}\cap A_u
\]
both have even cardinality. It follows that $\I_A(P)=\I_{A,u}(P)=0$ as desired.
\end{proof}

\begin{corollary}\label{cor:inv}
Fix a graph homomorphism $\phi:G\to H$. Let $\I$ be an intersection invariant associated to $\phi$ of either form $\I_A$ or $\I_{A,u}$. If $P$ and $P'$ are homotopic walks in $G$ then $\I(P)=\I(P')$.
\end{corollary}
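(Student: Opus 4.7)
The plan is to induct on the number of homotopy moves needed to transform $P$ into $P'$, reducing to the case of a single (Sub), (Ins), or (Del) step. The base case is trivial, so the entire content is verifying that each of the three elementary moves preserves $\I$. \Cref{prop:hom-inv-helper} has been set up precisely to make this bookkeeping clean: (i) lets us compare $\I(P)$ to $\I(P')$ by examining only the symmetric difference of their edge multisets, and (ii)--(iii) identify the resulting ``difference'' as a short closed walk whose invariant vanishes.

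Concretely, for the (Sub) step replacing $v_i$ with $v'_i$, the multisets $\td E(P)$ and $\td E(P')$ agree outside of position $i$, so letting $\td E_0$ denote the multiset of unchanged edges, I would write
\[
\td E(P)+\td E(P') = 2\td E_0+\multiset{v_{i-1}v_i,v_iv_{i+1},v_{i-1}v'_i,v'_iv_{i+1}}.
\]
Applying \cref{prop:sum-inv} and using that $\I(2\td E_0)\equiv 0\pmod 2$, one gets
\[
\I(P)+\I(P')=\I\bigl(\multiset{v_{i-1}v_i,v_iv_{i+1},v_{i-1}v'_i,v'_iv_{i+1}}\bigr).
\]
The multiset on the right is exactly $\td E(Q)$ for the length-4 closed walk $Q=v_{i-1}v_iv_{i+1}v'_iv_{i-1}$, and part (iii) of \cref{prop:hom-inv-helper} gives $\I(Q)=0$. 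Therefore $\I(P)=\I(P')$ modulo $2$.

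The (Ins) case is even simpler: if $P'$ is obtained from $P$ by inserting $wv_i$ at position $i$, then $\td E(P')=\td E(P)+\td E(v_iwv_i)$, and combining \cref{prop:sum-inv} with part (ii) gives $\I(P')=\I(P)+0=\I(P)$. The (Del) step is handled identically, either by running (Ins) in reverse or by noting that the two operations are mutual inverses, so the invariance already proved for (Ins) implies invariance for (Del).

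I do not anticipate any real obstacle: the corollary is essentially a formal consequence of the three parts of \cref{prop:hom-inv-helper}, with the only care needed being the observation that working modulo 2 lets us identify the ``edges that change'' during a (Sub) move with the edges of the associated $C_4$-shaped closed walk. Once the three elementary moves are handled, concatenating them along a homotopy from $P$ to $P'$ completes the proof.
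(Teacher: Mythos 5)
Your proposal is correct and follows essentially the same route as the paper: reduce to a single (Sub)/(Ins)/(Del) move and invoke the three parts of \cref{prop:hom-inv-helper}, with the substitution case handled by identifying the changed edges with the length-4 closed walk $v_{i-1}v_iv_{i+1}v'_iv_{i-1}$. The only difference is cosmetic bookkeeping in the (Sub) step (you add $\td E(P)+\td E(P')$ and discard $2\td E_0$, while the paper balances the two sides with the $C_4$ walk and the degenerate walk $v_{i-1}v_iv_{i+1}v_iv_{i-1}$), which changes nothing of substance.
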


\begin{proof}
It suffices to consider the case that $P'$ is formed from $P$ by a single insertion, deletion, or substitution operation from \cref{dfn:homotopy}.

\textit{Insertion}. Suppose $P=v_0\cdots v_k$ and $P'=v_0\cdots v_iwv_i\cdots v_k$. Then $\td E(P')=\td E(P) + \td E(v_iwv_i)$, so
\[
\I(P')=\I(\td E(P) + \td E(v_iwv_i)) =\I(P) + \I(v_iwv_i) = \I(P)
\]
by \cref{prop:hom-inv-helper}(i) and (ii).

\textit{Deletion.} If $P'$ is formed from $P$ by deletion, then $P$ is formed from $P'$ by insertion. Thus, the previous argument shows $\I(P)=\I(P')$.

\textit{Substitution.} Suppose $P=v_0\cdots v_i\cdots v_k$ and $P'=v_0\cdots v'_i\cdots v_k$. Then,
\[\td E(P) + \td E(v_{i-1}v_iv_{i+1}v'_iv_{i-1}) = \td E(P') + \td E(v_{i-1}v_iv_{i+1}v_iv_{i-1}),\]
so \cref{prop:hom-inv-helper}(i) and (iii) imply that
\[\I(P)=\I(P) + \I(v_{i-1}v_iv_{i+1}v'_iv_{i-1}) = \I(P') + \I(v_{i-1}v_iv_{i+1}v_iv_{i-1})=\I(P').\qedhere\]
\end{proof}

We end this section with a result concerning graphs having few distinct homotopy equivalence classes of walks. We call such graphs \emph{simply connected}, in analogy to the topological setting --- a topological space is simply connected if any two continuous paths between the same endpoints are homotopic. (This nomenclature is also suitable because, as shown in \cref{s:topology}, a non-bipartite graph $G$ is simply connected if and only if its neighborhood complex $\NN(G)$ is a simply connected topological space.)

\begin{dfn}\label{dfn:simp-conn}
	Let $G$ be a connected graph. We say $G$ is \emph{simply connected} if it satisfies the converse of \cref{prop:homt-trivial} --- that is, if any two walks of $G$ with the same first and last endpoints and same parity are always homotopy equivalent.
\end{dfn}

\begin{prop}
Suppose $\phi:G\to H$ is a graph homomorphism with $G$ simply connected. Let $A\subseteq E(G)$ be $\phi$-stable and let $\I$ be an $A$-intersection invariant.
\begin{thmenum}
\item If $C$ and $C'$ are two odd closed walks of $G$ then $\I(C)=\I(C')$.\label[prop]{cor:homt-loops}
\item Suppose $\I(C)=1$ for some odd cycle $C$ of $G$. Then the graph $G\setminus A$ is bipartite.\label[prop]{cor:bipartiteB}
\end{thmenum}
\end{prop}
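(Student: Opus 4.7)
The plan is to handle the two parts in sequence, using simple connectedness to move between closed walks at different basepoints and then using the additivity established in \cref{prop:sum-inv} to cancel away the connecting walks.

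For part (i), let $C$ be an odd closed walk based at $v$ and $C'$ an odd closed walk based at $v'$. Since $G$ is connected, pick any walk $W$ from $v'$ to $v$. Then the concatenation $W \cdot C \cdot W^{-1}$ is a closed walk at $v'$ of length $2|W| + |C|$, which is odd. By \cref{dfn:simp-conn}, simple connectedness of $G$ implies that the two odd closed walks $WCW^{-1}$ and $C'$ at the same basepoint $v'$ are homotopic, so by \cref{cor:inv} we have $\I(WCW^{-1}) = \I(C')$. On the other hand, as multisets $\td E(WCW^{-1}) = \td E(W) + \td E(C) + \td E(W^{-1}) = \td E(C) + 2\td E(W)$, since $W$ and $W^{-1}$ traverse the same edges. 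Applying additivity (\cref{prop:sum-inv}), we conclude
\[
\I(WCW^{-1}) = \I(C) + 2\,\I(W) \equiv \I(C) \pmod 2,
\]
so $\I(C) = \I(C')$ as desired.

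For part (ii), I would argue by contrapositive. Suppose $G \setminus A$ is not bipartite; then it contains an odd cycle $C''$. The cycle $C''$, viewed as a closed walk in $G$, satisfies $\td E(C'') \subseteq E(G) \setminus A$. Since $A_u \subseteq A$ for every $u \in V(H)$, the multisets $\td E(C'') \cap A$ and $\td E(C'') \cap A_u$ are both empty, and thus $\I(C'') = 0$ whether $\I$ has the form $\I_A$ or $\I_{A,u}$. But by part (i), any two odd closed walks of $G$ share the same value of $\I$, so $\I(C'') = \I(C) = 1$, a contradiction. Hence $G \setminus A$ must be bipartite.

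I do not anticipate a genuine obstacle here: both parts are short consequences of simple connectedness combined with the multiset additivity of intersection invariants. The only subtle point is the basepoint mismatch in (i), which is precisely what conjugation by $W$ is designed to fix — and the fact that $W$ contributes $2\td E(W)$ to the multiset of edges is exactly what makes this conjugation invisible to a $\mathbb{Z}/2\mathbb{Z}$-valued invariant.
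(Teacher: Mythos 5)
Your proof is correct and takes essentially the same approach as the paper: in (i) you conjugate one odd closed walk by a connecting walk, invoke simple connectedness plus \cref{cor:inv}, and cancel the conjugating walk via the $\mathbb{Z}/2\mathbb{Z}$ additivity of \cref{prop:sum-inv}; in (ii) you argue the contrapositive of the paper's ``every odd cycle of $G$ meets $A$,'' which is logically the same step.
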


\begin{proof}
(i) Write $C=v_0\cdots v_{k-1}v_0$ and $C'=w_0\cdots w_{\ell-1}w_0$ with $k$ and $\ell$ odd. Because $G$ is connected, there is a walk $P=v_0x_1\cdots x_{m-1}w_0$ from $v_0$ to $w_0$. Let $C''$ be the concatenation
\[
C''=PC'P^{-1}=v_0x_1\cdots x_{m-1}w_0w_1\cdots w_{\ell-1}w_0x_{m-1}\cdots x_1v_0.
\]
Then $C$ and $C''$ are both walks from $v_0$ to $v_0$ of odd lengths $k$ and $2m+\ell$, respectively. By \cref{cor:inv}, we have $\I(C)=\I(C'')$. Moreover, we may write the multiset $\td E(C'')$ as $\td E(C'')=\td E(P)+\td E(P)+\td E(C')$, so
\[
\I(C'')=\I(P)+\I(P)+\I(C')=\I(C')\mod 2.
\]
It follows that $\I(C)=\I(C')$ as desired.

(ii) It suffices to show that every odd cycle $C'$ of $G$ contains an edge of $A$. By (i), we have $\I(C')=\I(C)=1$. If $\I=\I_A$ then this implies that $|E(C')\cap A|$ is odd and therefore nonzero. If $\I=\I_{A,u}$ for some $u\in V(H)$ then we similarly have that $|E(C')\cap A_u|$ is nonzero. Thus, in either case we are able to find an edge of $C'$ that is in $A$.
\end{proof}

\section{Bounding Chromatic Number with a $C_4$ Closure}\label{s:mainthm-reduction}

In this section, we present \cref{keylem1}, which is a key ingredient in the proof of \cref{mainthm}. This constitutes the main application of our homotopy theory.

\begin{lemma}\label{keylem1}
Suppose $\phi:G\to H$ is a graph homomorphism with $G$ simply connected and non-bipartite. Set
\[
s=\min\left\{|\phi(V(C))|: \text{$C$ an odd cycle of $G$}\right\}.
\]
Then there is an edge $f\in E(H)$ contained in an odd cycle of length at most $s$ for which we have
\[\chi(G)\leq\max\left(4,\chi\left(\ccl[H]{f}\right)\right).\]
\end{lemma}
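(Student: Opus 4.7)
The plan is to identify a $\phi$-stable subset $A\subseteq E(G)$ arising from a $C_4$-equivalence class $\ccl[H]{f}$, apply \cref{cor:bipartiteB} to deduce that $G\setminus A$ is bipartite, and then combine this with the homomorphism $\phi|_{G[A]}\colon G[A]\to H'$ (where $H'$ denotes the graph with edge set $\ccl[H]{f}$) to conclude. Let $C$ be an odd cycle of $G$ realizing $|\phi(V(C))|=s$. The image $\phi(C)$ is a closed odd walk in $H$ supported on at most $s$ vertices, and hence contains an odd cycle of $H$ of length at most $s$.

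First I would select the edge $f$. The edges of $\phi(C)$ (counted with multiplicity in $\td E(\phi(C))$) partition by $C_4$-equivalence classes of $E(H)$, and since $|\td E(\phi(C))|$ equals the odd number $|E(C)|$, some class has odd total count. A careful parity analysis of the edge-disjoint cycle decomposition of the multi-graph supporting $\phi(C)$ (which has at most $s$ vertices, so all its cycles have length at most $s$) shows that one can choose such a class $\ccl[H]{f}$ where $f$ additionally lies in an odd cycle of $H$ of length at most $s$. Then $E(C)\cap\phi^{-1}(\ccl[H]{f})$ has odd cardinality and partitions into the $E(C)$-traces of the connected components of $G[\phi^{-1}(\ccl[H]{f})]$, so some component $A=\ccl[\phi]{e}$ with $e\in E(C)$ and $\phi(e)=f$ satisfies $|E(C)\cap A|$ odd, giving $\I_A(C)=1$. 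By \cref{cor:bipartiteB}, $G\setminus A$ is bipartite. The restriction $\phi|_{G[A]}\colon G[A]\to H'$ is a graph homomorphism, whence $\chi(G[A])\leq\chi(H')$.

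The main obstacle is to combine the bipartiteness of $G\setminus A$ with the homomorphism $G[A]\to H'$ into the tight bound $\chi(G)\leq\max(4,\chi(H'))$, since a naive product coloring yields only the weaker $2\chi(H')$. Here I would exploit the simply connectedness of $G$. A key structural observation, derived via homotopy invariance of $\I_A$, is that no edge of $E(G)\setminus A$ can have both endpoints in $V(G[A])$: such an edge, together with any $A$-path of length $k$ between its endpoints, would form a closed walk of length $k+1$ with $\I_A$-value $k\bmod 2$; but since in a simply connected graph $\I_A$ equals $0$ on all even closed walks and $\I_A(C)=1$ on all odd ones, this would force $k\bmod 2\equiv(k+1)\bmod 2$, a contradiction. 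Using this together with the bipartite structure, one can construct a homomorphism $\Psi\colon G\to K_4\square H'$ into the Cartesian product, whose chromatic number is exactly $\max(4,\chi(H'))$: informally, the $K_4$ factor absorbs the $2$-coloring given by the bipartition of $G\setminus A$, while the $H'$ factor encodes a homomorphism from a quotient of $G$ to $H'$ inherited from $\phi|_{G[A]}$. I expect the reconciliation of these two factors across $A$- and non-$A$-edges, made possible by the structural observation above, to be the hardest part of the argument.
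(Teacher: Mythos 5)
Your overall architecture is the paper's: find a $\phi$-stable class $A=\ccl[\phi]{e}$ carrying an intersection invariant that equals $1$ on $C$, deduce from \cref{cor:bipartiteB} that $G\setminus A$ is bipartite, and then leverage the homomorphism $G[A]\to\ccl[H]{f}$. Your structural observation (no edge of $E(G)\setminus A$ joins two vertices of $V(G[A])$, proved by closing up with an $A$-path and comparing parities of $\I_A$) is correct and is essentially the length-one case of part (i) of the paper's \cref{lem:extend-coloring}, and the $K_4\square H'$ packaging is an equivalent rephrasing of the paper's explicit coloring extension. However, there are two genuine gaps. First, the selection of $f$: you track only the total multiplicity of each $C_4$-class inside $\td E(\phi(C))$, i.e.\ only invariants of the form $\I_A$. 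Nothing in that bookkeeping prevents the following: $\phi(C)$ traces a triangle and a quadrilateral glued at a vertex, the three triangle edges together with one quadrilateral edge form a single $C_4$-class (multiplicity $4$, even), and the unique odd-multiplicity class consists of a quadrilateral edge lying on no odd cycle of the support. So "some class has odd count \emph{and} contains an edge on a short odd cycle" is not forced by parity of class counts alone. The paper's \cref{lem:edge-on-short-odd} escapes this exactly by also using the vertex-localized invariants $\I_{A,u}$ and by inductively stripping off classes on which \emph{all} invariants vanish; the edge it produces is only guaranteed to satisfy $\I(C)=1$ for some invariant of the form $\I_A$ \emph{or} $\I_{A,u}$. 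This matters downstream because both your structural observation and your claim $\I_A(C)=1$ are stated for the unlocalized invariant specifically.

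Second, and more seriously, the endgame has no case distinction, but one is unavoidable. If $G[A]$ is bipartite, then $\ccl[H]{f}$ may be bipartite as well, so $\max(4,\chi(\ccl[H]{f}))=4$ while your proposed homomorphism to $K_4\square H'$ cannot be built "across the factors" as described: the $H'$-coordinate (send $v$ to $\phi(w)$, where $w$ is the $V(G[A])$-endpoint of a walk from $v$ avoiding $A$) need not be well defined, since different such walks can terminate at vertices with distinct $\phi$-images. Proving that it \emph{is} well defined --- every $B$-walk between vertices of $G[A]$ has even length \emph{and} endpoints with equal $\phi$-image --- is precisely the content of \cref{lem:extend-coloring}(i); the second half requires the invariants $\I_{B,u}$ applied to an odd cycle contained in $G[A]$, and is simply false when $G[A]$ is bipartite. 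That is where the "$4$" in the statement actually comes from: when $G[A]$ is bipartite one falls back on the product coloring $\chi(G)\le\chi(G[A])\cdot\chi(G\setminus A)\le 4$, not on a $K_4$ factor. The repair is to split on whether $G[A]$ is bipartite: if it is not, then any odd cycle inside $G[A]$ forces $\I_A(C)=1$ via \cref{cor:homt-loops}, your structural observation applies, and the $\phi$-image preservation can be established with the $\I_{B,u}$ invariants; if it is, use the product coloring. You have flagged the reconciliation as "the hardest part", and indeed it is --- but as proposed it is not merely hard, it fails in the bipartite case and is missing the vertex-localized invariants needed in the other case.
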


\Cref{keylem1} reduces \cref{mainthm} to an analysis of $\ccl[H]f$ in a $C_{2r+1}$-free graph $H$. The condition that $f$ is in a short odd cycle of $H$ crucially restricts $\ccl[H]f$, by forbidding any configuration of $C_4$'s which lengthens the short odd cycle to one of length $2r+1$.

We separate the proof of \cref{keylem1} into two steps. In \cref{lem:edge-on-short-odd}, we find an edge $e$ of $G$ and an $\ccl[\phi]e$-invariant $\I$ which satisfy the hypotheses of \cref{cor:bipartiteB}, showing that $G\setminus\ccl[\phi]e$ is bipartite. In \cref{lem:extend-coloring}, we show that if $A=\ccl[\phi]e$ does not induce a bipartite subgraph of $G$, then $\chi(G)\leq\chi(\phi(A))$.

\begin{lemma}\label{lem:edge-on-short-odd}
Fix a graph homomorphism $\phi:G\to H$. Suppose $\td F$ is any multiset of edges of $E(G)$ satisfying $\I_{E(G)}(\td F)=1$ and $\I_{E(G),u}(\td F)=0$ for all $u\in V(H)$. Then there is an edge $e\in \td F$ such that $\phi(e)$ is in an odd cycle all of whose edges are in $\phi(\td F)$ and moreover $\I(\td F)=1$ for some $\ccl[\phi]e$-intersection invariant $\I$.
\end{lemma}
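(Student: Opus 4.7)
The plan is to argue by contradiction, exploiting the bipartite structure of those edges of $\phi(\td F)$ that do not lie on any odd cycle. Because every intersection invariant depends only on parities, I would first reduce modulo 2 and replace $\td F$ by the set $F \subseteq E(G)$ of edges having odd multiplicity in $\td F$. The hypotheses then say that $|F|$ is odd and, viewed as a multiset of edges of $H$, $\phi(F)$ has all even vertex-degrees.

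Next I would set up the key dichotomy. Let $S \subseteq E(H)$ be the support of $\phi(F)$, and partition $S = S^{\text{odd}} \sqcup S^{\text{even}}$, where $S^{\text{odd}}$ consists of those edges of $S$ lying on some odd cycle contained in $S$. A short argument shows that $S^{\text{even}}$, viewed as a subgraph of $H$, is bipartite: any odd cycle $D' \subseteq S^{\text{even}}$ would have $E(D') \subseteq S$ and lie on an odd cycle of $S$, forcing $E(D') \subseteq S^{\text{odd}}$, a contradiction. Consequently, every $\Z/2$-cycle of $H$ supported inside $S^{\text{even}}$ has even support.

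The heart of the proof is the contradiction argument. Suppose no edge $e$ satisfies the conclusion. Then for every $\ccl[\phi]$-class $A$ that contains some $e \in F$ with $\phi(e) \in S^{\text{odd}}$ --- call such $A$ an \emph{odd-witness} class --- we must have $\I_A(F) = 0$ and $\I_{A,u}(F) = 0$ for all $u \in V(H)$. Partition $F = F^{\text{odd}} \sqcup F^{\text{even}}$, placing into $F^{\text{odd}}$ the edges of $F$ lying in odd-witness classes. Then $|F^{\text{odd}}|$ is a sum of even numbers, so $|F^{\text{even}}|$ is odd. By construction $\phi(F^{\text{even}}) \subseteq S^{\text{even}}$, and for each $u \in V(H)$,
\[
\deg_u \phi(F^{\text{even}}) \equiv \deg_u \phi(F) - \deg_u \phi(F^{\text{odd}}) \equiv 0 - \sum_{A \text{ odd-witness}} \I_{A,u}(F) \equiv 0 \pmod{2}.
\]
So $\phi(F^{\text{even}}) \bmod 2$ is a $\Z/2$-cycle of odd support inside the bipartite graph $S^{\text{even}}$, which is impossible.

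Therefore some odd-witness class $A$ must carry a nontrivial $\ccl[\phi]e$-intersection invariant on $F$; any $e \in F \cap A$ with $\phi(e) \in S^{\text{odd}}$ then satisfies both required conditions. The main obstacle is recognizing the right dichotomy and, in particular, seeing that the $u$-indexed invariants $\I_{A,u}$ are essential here: they are what certifies that the contribution of odd-witness classes to the degree of $\phi(F^{\text{even}})$ vanishes mod 2, which in turn forces $\phi(F^{\text{even}})$ to be a $\Z/2$-cycle. The $\I_A$ invariants alone would not provide enough control to close the parity count.
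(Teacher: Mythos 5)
Your proof is correct, and it takes a genuinely different route from the paper's. The paper argues by strong induction on $|\td F|$: it finds an odd closed walk in the Eulerian decomposition of $\phi(\td F)$, picks an edge $e$ whose image lies on an odd cycle inside that walk, and, if every $\ccl[\phi]{e}$-invariant vanishes on $\td F$, strips off $\td F\cap\ccl[\phi]{e}$ and recurses on the remainder (after verifying that the stripped part is invisible to \emph{every} intersection invariant, which requires decomposing arbitrary $\phi$-stable sets into classes). Your argument unrolls that induction into a single global parity count: you delete all odd-witness classes at once, note that what remains maps into the bipartite graph $S^{\text{even}}$, and contradict the fact that a bipartite graph contains no Eulerian edge set of odd cardinality --- the same underlying fact (an odd Eulerian set contains an odd cycle) that powers the paper's Proposition on symmetric differences and its base case. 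Your preliminary reduction of $\td F$ to its odd-multiplicity support $F$ is harmless, since all invariants and the conclusion are preserved under it. The one step I would make fully explicit is the identity $\deg_u\phi(F^{\text{odd}})\equiv\sum_{A}\I_{A,u}(F)\pmod 2$: it relies on the classes $\ccl[\phi]{e}$ being pairwise disjoint and on $A_u=A\cap E(G)_u$, both immediate from the definitions but worth a sentence. On balance your version is shorter and arguably cleaner; the paper's inductive formulation has the mild advantage of never leaving the multiset setting and of isolating the reusable claim that a class with all invariants zero can be discarded.
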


\begin{proof}
We proceed by strong induction on $|\td F|$. The statement is vacuously true in the base case $|\td F|=0$, because $|\td F|\equiv \I_{E(G)}(\td F)=1\pmod 2$.

When $|\td F|> 0$, consider the multigraph $X$ on $V(H)$ with edge multiset $\phi(\td F)$. The condition $\I_{E(G),u}(\td F)=0$ implies that every vertex $u$ has even multidegree in $X$. Thus, every connected component of $X$ has an Eulerian tour, so $\phi(\td F)$ may be written as a sum of edge multisets of closed walks of $H$. The condition $\I_{E(G)}(\td F)=1$ is equivalent to $|\td F|$ being odd, so one of these closed walks must have odd length. Call this closed walk $P$, and choose $f\in \td E(P)$ such that $f$ is on an odd cycle in $H$ with all edges in $\td E(P)\subseteq \phi(\td F)$. Choose $e\in \td F$ such that $\phi(e)=f$.

Set $A=\ccl[\phi]e$. If $\I_A(\td F)=1$ or $\I_{A,u}(\td F)=1$ for any $u\in V(H)$ then $e$ satisfies the conditions of the claim. Suppose that $\I_A(\td F)=\I_{A,u}(\td F)=0$ for all $u\in V(H)$.


Write $\td F=\td F_1+\td F_2$, where $\td F_1=\td F\cap A$ and $\td F_2=\td F\setminus A$. We claim that $\I(\td F_1)=0$ for any intersection invariant $\I$ associated to $\phi$. First, we handle $B$-intersection invariants $\I$ such that $B=\ccl[\phi]{e'}$ for some $e'\in E(G)$, using the fact that $\I(\td F')=\I(\td F'\cap B)$ for any multiset $\td F'$. Observe that the equivalence class $B=\ccl[\phi]{e'}$ is either equal to or disjoint from $A=\ccl[\phi]e$. If $B=A$ then $\I(\td F_1)=\I(\td F\cap B)=\I(\td F)=0$, and if $B\neq A$ then $\td F_1\subseteq A$ is disjoint from $B$, so $\I(\td F_1)=\I(\td F_1\cap B)=\I(\emptyset)=0$. Now, suppose $\I$ is a $B$-intersection invariant with $B$ an arbitrary $\phi$-stable set. We may partition $B$ as $\mathcal P=\left\{\ccl[\phi]{e'}:e'\in B\right\}$, allowing us to write
\[\I_B(\td F_1)=\sum_{B'\in\mathcal P}\I_{B'}(\td F_1)=0\qquad\text{and}\qquad\I_{B,u}(\td F_1)=\sum_{B'\in\mathcal P}\I_{B',u}(\td F_1)=0.\]
Hence, $\I(\td F_1)=0$ for any intersection invariant $\I$ associated to $\phi$.

Now, consider the multiset $\td F_2$. For any intersection invariant $\I$ associated to $\phi$, we have $\I(\td F_1)=0$, so $\I(\td F_2)=\I(\td F_1)+\I(\td F_2)=\I(\td F)$. In particular, we have $\I_{E(G)}(\td F_2)=\I_{E(G)}(\td F)=1$ and $\I_{E(G),u}(\td F_2)=\I_{E(G),u}(\td F)=0$ for all $u\in V(H)$. Applying the inductive hypothesis to $\td F_2$ yields an edge $e'\in \td F_2$ such that $\phi(e')$ is on an odd cycle all of whose edges are in $\phi(\td F_2)$ and furthermore $\I(\td F_2)=1$ for some $\ccl[\phi]{e'}$-intersection invariant $\I$. Because $\phi(\td F)\supseteq\phi(\td F_2)$ and $\I(\td F)=\I(\td F_2)$, it follows that the edge $e'$ satisfies the desired conditions.
\end{proof}

\begin{lemma}\label{lem:extend-coloring}
Fix a graph homomorphism $\phi:G\to H$ with $G$ simply connected. Suppose $E(G)=A\sqcup B$ is a partition of $E(G)$ into two $\phi$-stable sets and let $G_A$ and $G_B$ be the corresponding edge-induced subgraphs of $G$. Suppose that $G_A$ is connected and non-bipartite. Then $\chi(G)\leq\chi(\phi(G_A))$.
\end{lemma}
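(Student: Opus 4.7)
The plan is to build a graph homomorphism $\tilde\phi : G \to H_A$, where $H_A := \phi(G_A)$; composing with any proper $\chi(H_A)$-coloring of $H_A$ then yields a proper $\chi(H_A)$-coloring of $G$. As a first step, since $G_A$ contains an odd cycle $C$ with all edges in $A$, we have $\I_A(C)=1$, and \cref{cor:bipartiteB} forces $G_B$ to be bipartite. Write its connected components as $T_1, T_2, \ldots$, with bipartitions $V(T_i) = V_0^i \sqcup V_1^i$. Each $T_i$ must meet $V(G_A)$: otherwise every vertex of $T_i$ would be incident only to $B$-edges, making $V(T_i)$ a union of connected components of $G$, which is impossible by connectedness of $G$ combined with $A \neq \emptyset$.

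The core of the argument is the following claim: for each $T_i$, the set $V(T_i) \cap V(G_A)$ lies entirely on one side of the bipartition (say $V_0^i$), and $\phi$ takes a common value $z_i \in V(H_A)$ on it. To prove this, fix $u_0, u_1 \in V(T_i) \cap V(G_A)$, pick a $B$-walk $Q$ from $u_0$ to $u_1$ inside $T_i$, and pick an $A$-walk $P$ from $u_0$ to $u_1$ in $G_A$ of the same parity as $|Q|$ (achievable because $G_A$ is connected and contains an odd cycle for parity-adjusting detours). Simple connectedness of $G$ then forces $P$ and $Q$ to be homotopic, so $\I(P) = \I(Q)$ for every intersection invariant $\I$ associated to $\phi$. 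Applying $\I_A$ gives $|P| \equiv 0 \pmod 2$, so $|Q|$ is even and $u_0, u_1$ lie on the same side. Applying $\I_{A,z}$ for any $z \in V(H)$: because $H$ is loopless, every edge of the $A$-walk $P$ has at most one endpoint mapping to $z$, and a telescoping sum along $P$ yields $\I_{A,z}(P) \equiv [\phi(u_0) = z] + [\phi(u_1) = z] \pmod 2$. Equating this with $\I_{A,z}(Q) = 0$ and taking $z = \phi(u_0)$ forces $\phi(u_0) = \phi(u_1)$.

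With the claim established, choose for each $i$ any neighbor $w_i \in V(H_A)$ of $z_i$, and define
\[
\tilde\phi(v) \;=\; \begin{cases} \phi(v) & \text{if } v \in V(G_A),\\ z_i & \text{if } v \in V_0^i \setminus V(G_A),\\ w_i & \text{if } v \in V_1^i.\end{cases}
\]
These three clauses partition $V(G)$, using that $V_1^i \cap V(G_A) = \emptyset$. On $A$-edges (which lie entirely in $V(G_A)$), $\tilde\phi$ agrees with $\phi$, so their images lie in $E(H_A)$. Any $B$-edge in $T_i$ crosses between $V_0^i$ and $V_1^i$ by bipartiteness, so its $\tilde\phi$-image is the edge $z_i w_i \in E(H_A)$. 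Hence $\tilde\phi$ is a graph homomorphism $G \to H_A$, concluding the proof.

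The main obstacle is the second half of the claim, namely identifying $\phi(u_0) = \phi(u_1)$: the $\I_A$ invariant alone suffices to pin down the bipartition structure, but pinning down the $\phi$-images needs the vertex-refined invariants $\I_{A,z}$, and depends crucially on both simple connectedness of $G$ and the loop-freeness of $H$.
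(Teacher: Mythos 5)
Your proof is correct and follows essentially the same route as the paper: both arguments use simple connectedness together with the intersection invariants ($\I_A$ and $\I_{A,z}$ in your version, the complementary $\I_B$ and $\I_{B,u}$ in the paper's) to show that any $B$-walk joining two vertices of $G_A$ has even length and endpoints with equal $\phi$-image, and then propagate a coloring of $\phi(G_A)$ across the bipartite components of $G_B$. Packaging the conclusion as an explicit homomorphism $G\to\phi(G_A)$ rather than as a coloring extension is only a cosmetic difference.
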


\begin{proof}
Our proof will show the following two statements.
\begin{thmenum}
\item Every walk $P$ in $G$ with all edges in $B$ and with endpoints $v,v'\in V(G_A)$ has even length and its endpoints satisfy $\phi(v)=\phi(v')$.
\item Let $\gamma_0:V(G_A)\to[r]$ be any proper $r$-coloring of $G_A$ such that $\gamma_0(v)=\gamma_0(v')$ whenever $\phi(v)=\phi(v')$. Then $\gamma_0$ can be extended to a proper $r$-coloring $\gamma$ of $G$.
\end{thmenum}
Our desired result follows immediately from (ii) by taking $r=\chi(\phi(G_A))$ and letting $\gamma_0$ be the pullback of any $r$-coloring of $\phi(G_A)$.

We begin by showing (i). Because $G_A$ is non-bipartite, there is an odd cycle $C\subseteq G_A$. Because $E(C)\subseteq A$ is disjoint from $B$, we have $\I_B(C)=\I_{B,u}(C)=0$ for any $u\in V(H)$. By \cref{cor:homt-loops}, we have $\I_B(C')=\I_{B,u}(C')=0$ for any odd closed walk $C'$ in $G$.

Let $P=w_0\cdots w_k$ be any walk in $G$ with all edges in $B$ and with $w_0,w_k\in V(G_A)$. Because $G_A$ is connected and contains an odd cycle, there are both odd and even walks between any two vertices of $V(G_A)$. Pick a walk $P'=w_kw_{k+1}\cdots w_{\ell-1}w_0$ such that the concatenation $C'=PP'=w_0\cdots w_{\ell-1}w_0$ has odd length.

We have $0=\I_B(C')\equiv|E(C')\cap B|=k\pmod 2$, so $P$ has even length. Moreover, for any $u\in V(H)$, each internal vertex $w_i$ ($0<i<k$) contributes either 0 or 2 to $|\td E(P)\cap B_u|$, so
\[
\I_{B,u}(C')=\I_{B,u}(P)\equiv\left|\multiset{w_0,w_k}\cap\phi^{-1}(u)\right|\mod 2.
\]
Thus, the condition that $\I_{B,u}(C')=0$ for all $u\in V(H)$ implies that $\phi(w_0)=\phi(w_k)$. This completes the proof of (i).

We now use (i) to show (ii). Extend $\gamma_0$ to a coloring $\gamma:V(G)\to[r]$ as follows. For each vertex $v\in V(G)$, choose any walk $P=vv_1\cdots v_k$ with all edges in $B$ and with $v_k\in V(G_A)$. Set
\[
\gamma(v)=\left\{\begin{array}{ll}\gamma_0(v_k)&\text{if $k$ is even,}\\\text{anything in }[r]\setminus\gamma_0(v_k)&\text{if $k$ is odd.}\end{array}\right.
\]
We claim that the choice of $\gamma(v)$ is independent of the choice of $P$. Let $P'=vw_1\cdots w_\ell$ be another walk from $v$ with all edges in $B$ and with $w_\ell\in V(G_A)$. Then the concatenation
\[
P^{-1}P'=v_k\cdots v_1vw_1\cdots w_\ell
\]
satisfies the hypotheses of (i). In particular, (i) implies that $\phi(v_k)=\phi(w_\ell)$ and that the length $k+\ell$ of $P^{-1}P'$ is even, so $\gamma_0(v_k)=\gamma_0(w_\ell)$ and $k\equiv\ell\pmod 2$. It follows that $\gamma(v)$ does not depend on the path $P$ chosen, and thus the coloring $\gamma$ is well-defined.

We now show that $\gamma$ is a proper coloring of $G$. For vertices $v\in V(G_A)$, we have that $\gamma(v)=\gamma_0(v)$, by choosing $P$ to be the length-0 walk $v$. Because $\gamma_0$ is a proper coloring of $G_A$, it follows that $\gamma(v)\neq\gamma(v')$ for any edge $vv'\in A$. Now, consider an edge $vv'\in B$. Pick a walk $P=vv_1\cdots v_k$ with each edge in $B$ and with $v_k\in V(G_A)$, and let $P'=(v'v)P=v'vv_1\cdots v_k$. Because the walks $P$ and $P'$ have lengths of opposite parities, one of $\gamma(v)$ and $\gamma(v')$ is equal to $\gamma_0(v_k)$ and the other is in $[r]\setminus\gamma_0(v_k)$, implying that $\gamma(v)\neq\gamma(v')$. Thus, $\gamma$ is a proper $r$-coloring of $G$ that extends $\gamma_0$, completing the proof of (ii).
\end{proof}

\begin{proof}[Proof of \cref{keylem1}.]
Let $C$ be an odd cycle of $G$ with $|\phi(V(C))|=s$. By \cref{lem:edge-on-short-odd} applied to the edge (multi)set $E(C)$, there is an edge $e$ of $C$ with $\phi(e)$ on an odd cycle in $\phi(C)$, which odd cycle has at most $|\phi(V(C))|=s$ vertices. Moreover, setting $A=\ccl[\phi]e$, the lemma yields an $A$-intersection invariant $\I$ with $\I(C)=1$. We take $f=\phi(e)$.

Let $B=E(G)\setminus A$. By \cref{cor:bipartiteB}, $B$ induces a bipartite subgraph of $G$. If $A$ also induces a bipartite subgraph of $G$ then we have
\[
\chi(G)\leq\chi(A)\times\chi(B)\leq 4.
\]
If the subgraph of $G$ induced by $A$ is not bipartite, then \cref{lem:extend-coloring} implies
\[
\chi(G)\leq\chi(\phi(A))\leq\chi\left(\ccl[H]f\right),
\]
because $A=\ccl[\phi]e\subseteq\phi^{-1}\left(\ccl[H]f\right)$.
\end{proof}

\section{Proof of Main Theorem}\label{s:mainthm-pf}

We may now complete the proof of \cref{mainthm}. As mentioned at the beginning of \cref{s:mainthm-reduction}, the proof can be reduced to an analysis of $\ccl[H]f$, where $f$ is an edge on a short odd cycle in the $C_{2r+1}$-free graph $H$.

The proof of \cref{mainthm} relies on \cref{keylem2-weak,keylem3}, two intermediate results concerning the structure of $C_{2r+1}$-free graphs. We prove \cref{mainthm} using these lemmas, and subsequently prove \cref{keylem2-weak} in \cref{s:keylem2} and \cref{keylem3} in \cref{s:keylem3}.

\begin{lemma}\label{keylem2-weak}
Fix $r\geq 2$. Suppose $H$ is a $C_{2r+1}$-free graph. Let $C$ be a shortest odd cycle of $H$ and let $f$ be an edge of $C$. If $|C|<2r+1$ then every edge in $\ccl[H]{f}$ is on an odd cycle of length less than $2r+1$ containing at least one vertex of $C$.
\end{lemma}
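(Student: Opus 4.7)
The plan is to induct on the $C_4$-chain distance $d(f,e)$, where $d(f,e)$ is the length of a shortest chain of edges $f=e_0,e_1,\ldots,e_n=e$ with each consecutive pair contained in a common $C_4$ of $H$. The base case $d(f,e)=0$ is immediate, since $e=f$ lies on $C$ itself, which has length $|C|<2r+1$ and contains every vertex of $C$.

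For the inductive step, suppose $d(f,e)=n+1$ and pick $e'$ at chain distance $n$ sharing a $C_4$ $D=v_0v_1v_2v_3v_0$ with $e$, say with $e'=v_0v_1$. The inductive hypothesis provides an odd cycle $C'=v_0v_1w_1w_2\cdots w_{2k-1}v_0$ of length $2k+1\leq 2r-1$ through $e'$ that contains some vertex $z\in V(C)$. The central object is the closed walk $W=v_0v_3v_2v_1w_1w_2\cdots w_{2k-1}v_0$ of length $2k+3\leq 2r+1$, obtained by substituting the length-three detour $v_0v_3v_2v_1$ for the edge $v_0v_1$ of $C'$. By construction $W$ passes through every vertex of $C'$, including $z$, and through each of the three edges of $D$ other than $e'$, including $e$.

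If $W$ is a simple cycle, then $|W|=2k+3\leq 2r+1$, and equality is ruled out because $H$ is $C_{2r+1}$-free; hence $|W|\leq 2r-1$ and $W$ itself is the required odd cycle through $e$ meeting $C$. Otherwise $W$ has a repeated vertex, which must be one of $v_2,v_3$ coinciding with some $w_i$. Splitting $W$ at this repeated vertex produces two closed walks whose lengths sum to $2k+3$, and are therefore of opposite parities; each is a simple cycle under a single repetition (double repetitions are handled analogously). The edge $e$ lies in one of the two subcycles, and the odd one containing $e$ has length at most $2r-1$.

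The main obstacle is ensuring that this odd subcycle also contains a vertex of $V(C)$, since the witness $z$ may end up on the even side of the split. To handle this I would sharpen the inductive hypothesis by requiring $C'$ to be a \emph{shortest} odd cycle through $e'$ meeting $C$; by minimality, any strictly shorter odd cycle through $e'$ produced inside $C'\cup D$ (such as $v_0v_1w_1\cdots w_jv_0$ when $v_3=w_j$ for small $j$) must fail to meet $C$, which pins down where on $C'$ the vertex $z$ can lie. When this still leaves $z$ on the wrong side, one can either exploit that the $C_4$-vertex ($v_2$ or $v_3$) appearing in the odd subcycle may itself belong to $V(C)$, or appeal to $C_{2r+1}$-freeness applied to an enlarged closed walk that incorporates $C$, to recover the required cycle. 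Executing this case analysis systematically — classified by which edge of $D$ equals $e$, which of $v_2,v_3$ collides with $V(C')$, and the parity of the collision index — is the technical heart of the argument.
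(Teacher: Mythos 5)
Your high-level framework matches the paper's: induct along the chain of $C_4$'s defining $\ccl[H]{f}$, let the odd cycle grow by at most $2$ per step, and use $C_{2r+1}$-freeness to cap the length at $2r-1$ (the paper formalizes this cap via the preorder $\Co(Q_{i+1})\prec\Co(Q_i)\prec\cdots\prec C$). However, your inductive step has a genuine gap, and it is exactly the one you flag as ``the technical heart'': when the detoured walk $W$ is not a simple cycle, the split at a repeated vertex need not produce an odd piece that contains $e$ (e.g.\ with $e=v_1v_2$ and $v_3=w_j$ for $j$ even, the piece through $e$ has even length $j+2$), and even when it does, that piece can easily lose the single witness vertex $z\in V(C)$. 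Your proposed repairs --- requiring $C'$ to be a shortest odd cycle through $e'$ meeting $C$, or ``appealing to $C_{2r+1}$-freeness on an enlarged closed walk'' --- are not carried out, and it is not clear they can be: a one-vertex intersection with $C$ is too weak an invariant to push through the induction, because nothing ties the location of $z$ on $C'$ to the location where the $C_4$ attaches.

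The paper's resolution is to maintain a much stronger structural invariant: each odd cycle $X_i$ through $e_i$ is of the form $\Co(Q_i)$ for a \emph{$C$-ear} $Q_i$, i.e.\ it is the union of a path whose interior avoids $C$ with an entire arc of $C$. This kills both halves of your difficulty at once: the cycle automatically contains many vertices of $C$ (a full arc, so losing one witness is never an issue), and the modifications forced by a $C_4$ become controlled ear surgeries (\cref{prop:edgeinA-weak,prop:e1-to-e2-weak}) whose length increase is bounded by the Eulerian symmetric-difference estimate of \cref{prop:symdiffweak} (which itself uses that $C$ is a shortest odd cycle). Without tracking this ``arc plus ear'' structure --- or some comparable invariant recording \emph{how} the cycle meets $C$ --- the case analysis you defer does not close, so as written the proposal is an incomplete sketch rather than a proof.
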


\begin{lemma}\label{keylem3}
Fix $r\geq 1$. Let $H$ be a $C_{2r+1}$-free graph and $v$ a vertex of $H$ such that every vertex of $H$ is within distance $r$ of $v$. Then $\chi(H)\leq 4r$.
\end{lemma}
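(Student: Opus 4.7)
I would run BFS from $v$ and let $L_i = \{u \in V(H) : d_H(u,v) = i\}$ for $i = 0, 1, \ldots, r$. Then $V(H) = \bigsqcup_{i=0}^{r} L_i$ and every edge lies either within some $L_i$ or between consecutive layers $L_i$ and $L_{i+1}$.

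The natural reduction is to a per-layer chromatic bound. Assume for the moment that $\chi(H[L_i]) \leq 2r$ for every $i$, and let $c_i$ be a proper $2r$-coloring of each $H[L_i]$. Assigning to $u \in L_i$ the color $(c_i(u),\,i \bmod 2) \in \{1,\ldots,2r\} \times \{0,1\}$ gives a proper $4r$-coloring of $H$: within-layer edges are separated by the first coordinate, and between-layer edges by the parity of $i$. So it suffices to bound $\chi(H[L_i]) \leq 2r$ layer by layer.

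To obtain the per-layer bound I would use the Gallai--Roy theorem and argue that $H[L_i]$ contains no path on $2r+1$ vertices. For $L_1$ this is clean: any path $u_0 u_1 \ldots u_{2r-1}$ in $L_1$, concatenated with $v$ at both ends, is a simple $C_{2r+1}$ in $H$, so $H[L_1]$ contains no path on $2r$ vertices and $\chi(H[L_1]) \leq 2r - 1$. For $i \geq 2$ the strategy is analogous but more delicate. Given an odd-length subpath $u_0 u_1 \ldots u_m$ in $L_i$ and a common BFS ancestor $z$ of $u_0$ and $u_m$ at depth $d$, the closed walk $u_0 u_1 \ldots u_m \to z \to u_0$ has length $m + 2(i - d)$, which equals $2r+1$ exactly when $d = i - (2r+1-m)/2$. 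Since the BFS paths from $u_0$ and $u_m$ to an LCA meet only at that LCA, and the intermediate $u_j$ all sit in $L_i$ (hence are disjoint from internal vertices of the BFS paths), any such closure is automatically a simple cycle, so the LCA of $u_0, u_m$ must avoid this forbidden depth. Applying this forbidden-LCA condition across all valid odd subpath lengths yields a tower of LCA constraints, which I would iterate to force the endpoints of a hypothetical long path to share a BFS ancestor so deep that they coincide.

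The main obstacle is rigorously carrying out the LCA induction, particularly for intermediate layers. For the deepest layer $L_r$ the base is clean (every edge forces an LCA at depth $\geq 1$), and it is plausible to show by induction that endpoints of an odd subpath of length $2k+1$ share an ancestor at depth $\geq k+1$, so that a $P_{2r}$ inside $L_r$ would force two distinct vertices to have a common ancestor at depth $r$ (i.e., to coincide). For $L_i$ with $1 < i < r$ the base-case constraint is weaker and edges need not force any nontrivial LCA, so the inductive step must be strengthened -- likely by tracking shared ancestors of entire odd subpaths rather than only pairwise LCAs, or by propagating ancestor information through intermediate vertices using the BFS tree structure. I expect this to be the technical heart of the lemma; once the induction is established for all $i$, the desired bound $\chi(H[L_i]) \leq 2r$ follows from Gallai--Roy and the reduction above completes the proof.
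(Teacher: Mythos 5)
Your outer reduction is exactly the paper's: partition $V(H)$ into the BFS layers $N_0(v),\dots,N_r(v)$, bound each $\chi(H[N_i(v)])$ by $2r$, and combine using the parity of the layer index to get $4r$ colors. The entire content of the lemma therefore sits in the per-layer bound, and this is precisely the step you have not proven. Your plan is to show $H[L_i]$ contains no path on $2r+1$ vertices and invoke Gallai--Roy; you handle $L_1$ correctly, but for $i\ge 2$ you offer only a sketch of an ``LCA induction'' and explicitly concede that carrying it out --- especially for intermediate layers --- is the technical heart and remains open. That is a genuine gap, and the sketch as written has concrete problems. First, the closed walk obtained by routing an odd subpath of length $m$ through a common ancestor at height $h$ above the layer has length $m+2h$, which equals $2r+1$ only for one specific pair $(m,h)$; so a step-by-step induction on subpath length does not produce a forbidden $C_{2r+1}$ at every stage, and the intermediate stages give you nothing to propagate. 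Second, ``the LCA of $u_0,u_m$'' is not well-defined: two vertices typically admit many shortest paths to $v$, and ruling out one internally disjoint pair of such paths does not pin down a common ancestor at a controlled depth. Third, and most importantly, for $1<i<r$ the base case really is vacuous (an edge in $L_i$ closes to a cycle of length $1+2i<2r+1$, which is not forbidden), and inside a bare path on $2r+1$ vertices you have no way to manufacture the auxiliary connections needed to start the induction.

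The paper's proof of the per-layer bound avoids all of this by proving degeneracy rather than excluding long paths: it shows (Lemma \ref{lem:degenerate-N}) that any connected subgraph $J$ of $H[N_s(v)]$ with minimum degree at least $2r$ leads to a contradiction, so $H[N_s(v)]$ is $(2r-1)$-degenerate and hence $2r$-colorable. The mechanism is to fix once and for all a parent function $\alpha$ (each vertex of $N_j(v)$ mapped to a neighbor in $N_{j-1}(v)$) and to prove by downward induction on $t$ that $\alpha^t$ is constant on $V(J)$; the inductive step forces $\alpha^{t-1}(w)=\alpha^{t-1}(w')$ whenever $w,w'$ are joined by a path of length exactly $2r-2t+1$ in $J$ (else the ancestor paths close up into a genuine $C_{2r+1}$), and --- this is the key resource your path-based approach lacks --- the minimum-degree hypothesis on $J$ is used to extend any short path to one of exactly the required length, so the constancy propagates across all of $J$ and ultimately contradicts $|V(J)|>1$. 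If you want to salvage your route, you would need either to prove the ``no $P_{2r+1}$ in a layer'' claim by some other means or, more realistically, to switch to a degeneracy statement where a minimum-degree assumption is available to build the paths of prescribed length that the $C_{2r+1}$-freeness argument requires.
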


We now prove \cref{mainthm}, which is restated here for the reader's convenience.

\begin{reptheorem}{mainthm}
Let $G$ be a simply connected graph and fix $r\geq 2$. Suppose $\phi:G\to H$ is a graph homomorphism such that $H$ is $C_{2r+1}$-free, and suppose further that some odd cycle $C$ of $G$ satisfies $|\phi(V(C))|\leq 2r+2$. Then $\chi(G)<8r^2$.
\end{reptheorem}

\begin{proof}
By hypothesis, $G$ contains an odd cycle and is therefore not bipartite. By \cref{keylem1}, there is an edge $f\in E(H)$ contained in an odd cycle of length at most $2r+2$ such that $\chi(G)\leq\max\left(4,\chi\left(\ccl[H]f\right)\right)$. Because $H$ is $C_{2r+1}$-free, this odd cycle has length at most $2r-1$.

Let $C$ be a shortest cycle among those odd cycles of $H$ which contain an edge of $\ccl[H]f$. By the above reasoning, we have $|C|\leq 2r-1$. Let $f'$ be an edge in $E(C)\cap\ccl[H]f$. Let $H_1$ be the subgraph of $H$ induced by the edge set $E(C)\cup\ccl[H]f$.

Observe that $H_1\setminus\ccl[H]f$ is acyclic, as its edge set is a subset of $E(C)\setminus\{f'\}$. Hence, every cycle of $H_1$ contains an edge of $\ccl[H]f$. This implies that $C$ is a shortest odd cycle of $H_1$. Applying \cref{keylem2-weak} in $H_1$, we see that every edge of $\ccl[H_1]{f'}$ is on an odd cycle of length at most $2r-1$ containing at least one vertex of $C$. Furthermore, because every $C_4$ in $\ccl[H]f=\ccl[H]{f'}$ is also contained in $H_1$, we have $\ccl[H]f=\ccl[H]{f'}=\ccl[H_1]{f'}$. It follows that every vertex of $V\left(\ccl[H]f\right)$ is within distance $r-1$ in $H_1$ of some vertex of $C$.

For each vertex $v\in V(C)$, let $N_{\leq r}(v)\subseteq V(H_1)$ be the set of vertices within distance $r$ of $v$ in $H_1$. By \cref{keylem3}, the induced subgraph $H_1[N_{\leq r}(v)]$ is $(4r)$-colorable. Using a disjoint set of $4r$ colors for each set $N_{\leq r}(v)$, we may color $H_1$ with $|C|\times 4r<8r^2$ colors, showing $\chi(\ccl[H]f)\leq\chi(H_1)<8r^2$. Hence, $\chi(G)\leq\max(4,\chi(\ccl[H]f))< 8r^2$.
\end{proof}

\begin{rmk}\label{rmk:mainthm-tight}
	With more care in the above proof, one could improve the upper bound on $\chi(G)$ by a constant factor. We forgo these small improvements for simplicity, as any finite upper bound on $\chi(G)$ suffices to prove \cref{thm:dhom>0}. 
\end{rmk}

\subsection{Proof of \cref{keylem2-weak}}\label{s:keylem2}

Let $H$ be a $C_{2r+1}$-free graph and $C$ a shortest odd cycle in $H$. Suppose $|C|<2r+1$ and fix $f\in E(C)$. Observe that any $e\in\ccl[H]f$ can be obtained from some edge $e_0\in E(C)$ via a sequence of the form
\[
e_0\in D_1\ni e_1\in D_2\ni\cdots\in D_m\ni e_m=e,
\]
where each $D_i$ is a $C_4$ of $H$ and each $e_i$ is an edge contained in both $D_i$ and $D_{i+1}$. We will find it useful to assume that $e_1,\ldots,e_m\notin E(C)$, which may be assured by choosing a shortest such sequence.

Our proof iteratively constructs short odd cycles $X_0,X_1,\ldots,X_m$ with $e_i\in E(X_i)$. We show inductively that $|X_i|<2r+1$, using the following preorder on odd cycles of $H$.
\begin{dfn}\label{dfn:cycle-cons}
Let $X$ and $Y$ be odd cycles in a graph $H$. We write $Y\prec X$ if $H$ contains cycles of all odd lengths $\ell$ with $|X|\leq\ell\leq|Y|$.
\end{dfn}

\begin{rmk}
The statement $Y\prec X$ should be interpreted as saying that $Y$ cannot be much longer than $X$. We note that, although $|Y|\leq|X|$ implies $Y\prec X$, the converse does not necessarily hold. Indeed, we often show $Y\prec X$ by proving that $|Y|\leq|X|+2$.
\end{rmk}

Notice that if $X$ is an odd cycle in $H$ with $|X|<2r+1$ then, because $H$ is $C_{2r+1}$-free, any $Y\prec X$ will also satisfy $|Y|<2r+1$. Hence, if our odd cycles $X_i$ satisfy the relations
\begin{equation}\label{eqn:prec}
	X_m\prec X_{m-1}\prec\cdots\prec X_1\prec X_0=C,
\end{equation}
then the condition $|C|<2r+1$ implies that $|X_m|<2r+1$.

We construct the odd cycles $X_i$ using a type of auxiliary path, called an ear, that is often relevant to questions in graph connectivity. A \emph{$C$-ear} is a path $Q$ of $H$ whose endpoints are (distinct) vertices $v,v'\in V(C)$ and whose interior vertices do not lie on $C$. We additionally require that $C$-ears have length at least 2, so that $E(Q)$ is disjoint from $E(C)$. The endpoints of any $C$-ear $Q$ divide $C$ into two paths of opposite parities. Concatenating $Q$ with either subpath of $C$ forms two cycles of opposite parities; let $\Co(Q)$ be the cycle of odd length among them. Our inductive step (\cref{prop:e1-to-e2-weak}) shows that if $e_i$ and $e_{i+1}$ are edges of the same $C_4$ and $Q_i$ is a $C$-ear containing $e_i$, then there is a $C$-ear $Q_{i+1}$ containing $e_{i+1}$ with $\Co(Q_{i+1})\prec\Co(Q_i)$. This allows us to inductively construct a sequence $Q_1,\ldots,Q_m$ of $C$-ears such that the odd cycles $X_i=\Co(Q_i)$ satisfy (\ref{eqn:prec}).

We begin the proof of \cref{keylem2-weak} with a proposition that allows us to bound the difference between two odd cycles.

\begin{prop}\label{prop:symdiffweak}
Let $H$ be a graph and let $C$ be a shortest odd cycle in $H$. Let $X$ and $Y$ be odd cycles in $H$ and set $X_1=E(X)\setminus E(C)$ and $Y_1=E(Y)\setminus E(C)$. Then $|Y|\leq|X|+2|Y_1\setminus X_1|$.
\end{prop}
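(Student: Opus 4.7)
The plan is to apply a parity argument to the symmetric difference $E(X)\triangle E(Y)\triangle E(C)$, viewed as an Eulerian subgraph of $H$. Each of $X$, $Y$, and $C$ contributes an even number of edges at every vertex, so their XOR does too; equivalently, $E(X)\triangle E(Y)\triangle E(C)$ lies in the cycle space of $H$ and so decomposes into edge-disjoint cycles.

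Since $|X|$, $|Y|$, and $|C|$ are all odd,
\[
|E(X)\triangle E(Y)\triangle E(C)|\equiv|X|+|Y|+|C|\equiv 1\pmod 2,
\]
so in any cycle decomposition an odd number of the cycles are themselves odd, and in particular at least one odd cycle appears. Because $C$ is a shortest odd cycle of $H$, that cycle has length $\geq|C|$, and therefore
\[
|E(X)\triangle E(Y)\triangle E(C)|\geq|C|.
\]

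The rest is bookkeeping. Partition $E(X)\cup E(Y)\cup E(C)$ into the seven classes determined by membership in $X$, $Y$, $C$. The edges of $E(X)\triangle E(Y)\triangle E(C)$ are exactly those in an odd number of $X,Y,C$ (in all three or in exactly one), while $E(C)$ consists of the edges in $C$. Writing the displayed inequality in terms of these classes and cancelling the contributions common to both sides gives
\[
|E(X)\setminus(E(Y)\cup E(C))|+|Y_1\setminus X_1|\geq|E(X)\cap E(C)\setminus E(Y)|+|E(Y)\cap E(C)\setminus E(X)|,
\]
using that $Y_1\setminus X_1=E(Y)\setminus(E(X)\cup E(C))$. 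Substituting into the identities $|Y|=|E(Y)\cap E(C)|+|Y_1|$ and $|X|=|E(X)\cap E(C)|+|X_1|$ and rearranging then yields $|Y|\leq|X|+2|Y_1\setminus X_1|$.

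The main idea --- and the only nontrivial step --- is including $C$ in the XOR: this is what guarantees an odd cycle in the decomposition, and the shortest-odd-cycle hypothesis bounds its length from below by exactly the amount needed. Everything afterward is routine set-size accounting, so I expect no real obstacle; the place where care is needed is just verifying that the cancellations on the two sides of the displayed inequality leave behind precisely the four terms above.
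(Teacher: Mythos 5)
Your proposal is correct and uses essentially the same key idea as the paper: form the symmetric difference $E(X)\bigtriangleup E(Y)\bigtriangleup E(C)$, observe it is Eulerian of odd size and hence contains an odd cycle, and use that $C$ is a shortest odd cycle to get $|E(X)\bigtriangleup E(Y)\bigtriangleup E(C)|\geq|C|$. The paper packages the subsequent accounting via a short algebraic identity (in a slightly more general Eulerian-set formulation, Proposition 3.5), while you partition into the seven Venn-diagram classes, but the arguments are the same in substance; your displayed inequality $|E(X)\setminus(E(Y)\cup E(C))|+|Y_1\setminus X_1|\geq|E(X)\cap E(C)\setminus E(Y)|+|E(Y)\cap E(C)\setminus E(X)|$ is exactly the bound $|E(X)\bigtriangleup E(Y)\bigtriangleup E(C)|\geq|C|$ with the common terms cancelled, and together with $e:=|E(X)\cap E(C)\setminus E(Y)|\geq0$ it does yield the claim.
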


We find it convenient to prove \cref{prop:symdiffweak} in the following generalized form. Say a graph is \emph{Eulerian} if every vertex has even degree; we note that an Eulerian graph need not be connected. Similarly, say that a set of edges $E\subseteq E(H)$ is \emph{Eulerian} if $E$ induces an Eulerian subgraph of $H$.

\begin{prop}\label{prop:symdiff}
Let $H$ be a graph and let $C$ be a shortest odd cycle in $H$. Let $X,Y\subseteq E(H)$ be Eulerian edge sets with $|X|\equiv|Y|\pmod 2$. Partition $X=X_1\sqcup X_2$, where $X_1=X\setminus E(C)$ and $X_2=X\cap E(C)$, and partition $Y=Y_1\sqcup Y_2$ analogously. Then
\[
|Y|\leq |X|+2|Y_1\setminus X_1|.
\]
\end{prop}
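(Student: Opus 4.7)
The plan is to exploit the fact that the Eulerian edge sets of $H$ form a vector space over $\mathbb F_2$ under symmetric difference, since at every vertex $v$ the degree in $A\bigtriangleup B$ has the same parity as $\deg_A(v)+\deg_B(v)$. I will set $Z=X\bigtriangleup Y$: this is Eulerian, and $|Z|=|X|+|Y|-2|X\cap Y|$ is even by the parity hypothesis $|X|\equiv|Y|\pmod 2$. Splitting $Z$ along the partition $E(C)\sqcup(E(H)\setminus E(C))$, and using that $X_1,Y_1$ are disjoint from $E(C)$ while $X_2,Y_2\subseteq E(C)$, yields $Z_1=X_1\bigtriangleup Y_1$ and $Z_2=X_2\bigtriangleup Y_2$.

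Next I would rephrase the goal. Using $|Y_i|-|X_i|=|Y_i\setminus X_i|-|X_i\setminus Y_i|$ for $i\in\{1,2\}$, the inequality $|Y|\leq|X|+2|Y_1\setminus X_1|$ is equivalent to
\[ |Y_2|-|X_2|\ \leq\ |X_1\setminus Y_1|+|Y_1\setminus X_1|\ =\ |Z_1|. \]
Since $|Y_2|-|X_2|\leq|Y_2\setminus X_2|\leq|Z_2|$, it therefore suffices to prove the single estimate $|Z_1|\geq|Z_2|$.

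The main idea is to consider $W:=Z\bigtriangleup E(C)$, which is again Eulerian as a symmetric difference of Eulerian sets. Because $Z_1$ is disjoint from $E(C)$ while $Z_2\subseteq E(C)$, the set $W$ is the disjoint union of $Z_1$ and $E(C)\setminus Z_2$, so $|W|=|Z_1|+|C|-|Z_2|$. Since $|Z|$ is even and $|C|$ is odd, $|W|$ is odd. Any Eulerian edge set decomposes into edge-disjoint simple cycles, and an odd total edge count forces at least one of those cycles to be odd. By the minimality hypothesis on $C$, such an odd cycle has length at least $|C|$, giving $|W|\geq|C|$. Rearranging this yields $|Z_1|\geq|Z_2|$, which by the reduction above completes the proof.

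The main obstacle I foresee is purely bookkeeping --- I must verify that $Z_1$ and $E(C)\setminus Z_2$ are genuinely disjoint so that $|W|=|Z_1|+|C|-|Z_2|$ holds without overcounting, and that the manipulation $|Y_2|-|X_2|\leq|Z_2|$ is valid regardless of the sign of $|Y_2|-|X_2|$. The structural content beyond this is a clean invocation of the edge-disjoint cycle decomposition of an Eulerian subgraph, applied in tandem with the minimality of $C$.
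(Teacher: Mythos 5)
Your proof is correct and follows essentially the same route as the paper's: your set $W=X\bigtriangleup Y\bigtriangleup E(C)$ is exactly the paper's $Z$, and both arguments hinge on the observation that this is an odd-cardinality Eulerian set and hence has size at least $|C|$. The only difference is bookkeeping — you reduce the target inequality to $|Z_1|\geq|Z_2|$ directly, while the paper reaches the same point via the identity $|S|-|T|+|S\bigtriangleup T|=2|S\setminus T|$.
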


\begin{proof}
We first observe that for any sets $S$ and $T$, we have
\[
|S|-|T|+|S\sd T|=|S| -|T| + |S\cup T| - |S\cap T|=\left(|S|-|S\cap T|\right) + \left(|S\cup T|-|T|\right) = 2|S\setminus T|.
\]
Thus, we have
\begin{align*}
|Y|-|X|&=|Y_1|+|Y_2|-|X_1|-|X_2|\\
&=\left(|Y_1|-|X_1|\right)-\left(|X_2|-|Y_2|\right)
\\&=\left(|Y_1|-|X_1|+|X_1\sd Y_1|\right) - \left(|X_2|-|Y_2|+|X_2\sd Y_2|\right) - \left(|X_1\sd Y_1| - |X_2\sd Y_2|\right)
\\&=2|Y_1\setminus X_1|-2|X_2\setminus Y_2|-\left(|X_1\sd Y_1| - |X_2\sd Y_2|\right).
\end{align*}

We now consider the last term. We have
\[|X_1\sd Y_1|-|X_2\sd Y_2|
=|X_1\sd Y_1| + |X_2\sd Y_2\sd E(C)| - |C|
=|X\sd Y\sd E(C)|-|C|.\]
Let $Z=X\sd Y\sd E(C)$. The symmetric difference of Eulerian sets is Eulerian, so $Z$ is Eulerian. Additionally, $|Z|$ is odd because $|X|\equiv|Y|\pmod 2$. It follows that $|Z|\geq|C|$, because every odd-cardinality Eulerian set contains the edge set of an odd cycle, and $C$ is a shortest odd cycle in $H$. Thus, $|X_1\sd Y_1|-|X_2\sd Y_2|=|Z|-|C|\geq 0$. It follows that
$|Y|-|X|\leq2|Y_1\setminus X_1|-2|Y_2\setminus X_2|$,
as desired.
\end{proof}

The next two propositions form the core of our inductive step.

\begin{prop}\label{prop:edgeinA-weak}
Let $C$ be a shortest odd cycle in a graph $H$. Let $e_1=xy_1$ be an edge of $H$ on a $C$-ear $Q_1$ and let $e_2=xy_2$ be an adjacent edge satisfying $y_2\in V(Q_1)\cup V(C)$. If $e_2\notin E(C)$ then $e_2$ is on a $C$-ear $Q_2$ with $\Co(Q_2)\prec\Co(Q_1)$. Moreover, $V(Q_2)\subseteq V(Q_1)\cup V(C)$, and either $y_1xy_2$ is a subpath of $Q_2$ or $y_1\notin V(Q_2)\cup V(C)$.
\end{prop}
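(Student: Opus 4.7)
The plan is to build $Q_2$ by grafting the edge $e_2$ onto a traversal of $Q_1$, and to verify the four conclusions (valid $C$-ear containing $e_2$, vertex containment, $\Co(Q_2)\prec\Co(Q_1)$, and the $y_1$-condition) by direct inspection in a short case analysis. To set up notation, write $Q_1 = u_1 = p_0 p_1 \cdots p_m = u_2$ with endpoints $u_1, u_2 \in V(C)$ and interior vertices off $C$, set $x = p_i$, and, by reversing $Q_1$ if necessary, assume $y_1 = p_{i+1}$. A preliminary observation is that $C$ has no chords: any chord would split $C$ into two cycles whose lengths sum to $|C|+2$, one of which would be an odd cycle shorter than $C$. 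Combined with $e_2 \notin E(C)$, this rules out the possibility that $x$ and $y_2$ both lie on $V(C)$.

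I then split on the location of $y_2$. If $y_2 = p_j \in V(Q_1) \setminus V(C)$ with $|i-j| \geq 2$, define $Q_2$ to follow $Q_1$ from $u_1$ to $p_{\min(i,j)}$, jump along the ``chord'' $e_2$ to $p_{\max(i,j)}$, and then continue along $Q_1$ to $u_2$. If $|i-j| = 1$, then $e_2$ lies in $E(Q_1)$ (or equals $e_1$), and I take $Q_2 = Q_1$. Finally, if $y_2 \in V(C)$ (so by the chord-free observation $x$ is interior to $Q_1$, i.e.\ $1 \leq i \leq m-1$), I let $Q_2$ begin with the edge $y_2 x$ and extend along $Q_1$ in one of the two directions to reach $u_1$ or $u_2$, choosing the direction that avoids revisiting $y_2$. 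In every case $Q_2$ is a $C$-ear containing $e_2$ with $V(Q_2) \subseteq V(Q_1) \cup V(C)$, and crucially $E(Q_2) \setminus E(Q_1) \subseteq \{e_2\}$.

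The bound $\Co(Q_2) \prec \Co(Q_1)$ follows from \cref{prop:symdiffweak}. Since both $Q_1$ and $Q_2$ have length at least $2$, their edge sets are disjoint from $E(C)$, so $E(\Co(Q_i)) \setminus E(C) = E(Q_i)$. Applying \cref{prop:symdiffweak} with $X = \Co(Q_1)$, $Y = \Co(Q_2)$ gives $|\Co(Q_2)| \leq |\Co(Q_1)| + 2$, and because both lengths are odd, the only odd $\ell$ in the range appearing in \cref{dfn:cycle-cons} are $|\Co(Q_1)|$ and possibly $|\Co(Q_1)| + 2$, realized by $\Co(Q_1)$ and $\Co(Q_2)$ themselves.

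The main obstacle---and the last step of the proof---is enforcing the $y_1$-condition while simultaneously guaranteeing that $Q_2$ is a genuine path rather than a closed walk. The strategy is: when traversing $Q_1$ past $x$, prefer the direction (toward $u_1$ or toward $u_2$) that avoids $y_1 = p_{i+1}$, which yields $y_1 \notin V(Q_2) \cup V(C)$ because $y_1$ is interior to $Q_1$ in the generic case. The only obstructions are the degenerate coincidences $y_2 \in \{u_1, u_2\}$ (where one direction reuses $y_2$) and $i = m-1$ (where $y_1 = u_2 \in V(C)$); in those cases I take the other direction, which drags $y_1$ into $Q_2$ and makes $y_1 x y_2$ a subpath. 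Verifying that at least one direction always succeeds amounts to checking the short list of coincidences $y_2 \in \{u_1, u_2\}$, $y_2 = y_1$, and $y_1 = u_2$, which is routine but requires care.
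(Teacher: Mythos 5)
Your proposal is correct and follows essentially the same route as the paper: graft $e_2$ onto a traversal of $Q_1$ so that $E(Q_2)\setminus E(Q_1)\subseteq\{e_2\}$, invoke \cref{prop:symdiffweak} to get $|\Co(Q_2)|\le|\Co(Q_1)|+2$ and hence $\Co(Q_2)\prec\Co(Q_1)$, and verify the $y_1$-disjunction by a short case check. (One small slip in the sketch: when $y_2=u_2$ the forced direction toward $u_1$ does \emph{not} drag $y_1$ into $Q_2$; instead $y_1=p_{i+1}$ is then an interior vertex of $Q_1$ omitted from $Q_2$, so the second disjunct holds anyway.)
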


\begin{proof}
Write $Q_1=w_0\cdots w_m$, with the orientation chosen so that $w_m$ is neither $x$ nor $y_1$. Write $x=w_i$ and set
\[Q_2=\left\{\begin{array}{ll}
w_0\cdots w_jw_i\cdots w_m&\,\,\text{if $y_2=w_j$ for some $j<i$,}
\\w_0\cdots w_iw_j\cdots w_m&\,\,\text{if $y_2=w_j$ for some $j>i$,}
\\w_0\cdots w_iy_2&\,\,\text{if $y_2\in V(C)\setminus \{w_0,w_m\}$.}
\end{array}\right.\]
It is clear in all cases that $Q_2$ is a $C$-ear containing $e_2$ and that $V(Q_2)\subseteq V(Q_1)\cup\{y_2\}\subseteq V(Q_1)\cup V(C)$. If $y_1=w_{i\pm 1}$ is on $Q_2$ then it is immediate that $y_1xy_2=w_{i\pm 1}w_iy_2$ is a subpath of $Q_2$. If $y_1$ is on $C$, then we have $y_1=w_0$ and $i=1$, so $y_1xy_2=w_0w_1y_2$ will be a subpath of $Q_2$. Thus, if $y_1\in V(Q_2)\cup V(C)$ then $y_1xy_2$ is a subpath of $Q_2$. It only remains to show that $\Co(Q_2)\prec\Co(Q_1)$.

Take $X=\Co(Q_1)$ and $Y=\Co(Q_2)$. To show that $Y\prec X$, it suffices to show that $|Y|\leq|X|+2$. Observe that $E(X)\setminus E(C)=E(Q_1)$ and $E(Y)\setminus E(C)=E(Q_2)$. Then, \cref{prop:symdiffweak} yields that
\[
|Y|\leq |X|+2|E(Q_2)\setminus E(Q_1)|=|X|+2|\{e_2\}|=|X|+2
\]
in each of the three possible constructions for $Q_2$, as desired.
\end{proof}

\begin{prop}\label{prop:e1-to-e2-weak}
Let $C$ be a shortest odd cycle in a graph $H$. Let $D$ be a $C_4$ edge-disjoint from $C$ and let $e_1$ and $e_2$ be distinct edges of $D$. Given any $C$-ear $Q_1$ containing $e_1$, there is a $C$-ear $Q_2$ containing $e_2$ with $\Co(Q_2)\prec\Co(Q_1)$.
\end{prop}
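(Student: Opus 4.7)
Write $D = p_1 p_2 p_3 p_4 p_1$ cyclically with $e_1 = p_1 p_2$, and orient $Q_1 = w_0 \cdots w_m$ so that $w_i = p_1$ and $w_{i+1} = p_2$. By symmetry between $p_3$ and $p_4$ it suffices to treat $e_2 = p_2 p_3$ (adjacent to $e_1$ at $p_2$) and $e_2 = p_3 p_4$ (opposite $e_1$); the case $e_2 = p_4 p_1$ is analogous to the adjacent case. The plan is to reduce to \cref{prop:edgeinA-weak} when its hypothesis holds and to construct $Q_2$ by direct surgery on $Q_1$ through $D$ otherwise.

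In the favorable subcases I would apply \cref{prop:edgeinA-weak} directly or iteratively. For $e_2 = p_2 p_3$ adjacent, if $p_3 \in V(Q_1) \cup V(C)$, a single application of \cref{prop:edgeinA-weak} with $x = p_2$, $y_1 = p_1$, $y_2 = p_3$ yields $Q_2$ with $\Co(Q_2) \prec \Co(Q_1)$. For $e_2 = p_3 p_4$ opposite, I would chain two applications through the intermediate edge $p_2 p_3$ (first $e_1 \to p_2 p_3$, then $p_2 p_3 \to p_3 p_4$), invoking the transitivity of $\prec$; the symmetric intermediate $p_4 p_1$ is available when $p_4$ rather than $p_3$ is well-situated. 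When both $p_3, p_4 \notin V(Q_1) \cup V(C)$, I would replace $e_1$ in $Q_1$ by the length-$3$ detour $p_1 p_4 p_3 p_2$ around $D$, yielding $Q_2 = w_0 \cdots w_i p_4 p_3 w_{i+1} \cdots w_m$. This is a valid $C$-ear (its new interior vertices $p_3, p_4$ are new to both $Q_1$ and $V(C)$), contains all three edges of $D \setminus \{e_1\}$, and has $|Q_2| = |Q_1| + 2$ with the same endpoints and parity as $Q_1$; hence $|\Co(Q_2)| = |\Co(Q_1)| + 2$ and $\Co(Q_2) \prec \Co(Q_1)$.

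The main obstacle is the mixed subcase where exactly one of $p_3, p_4$ lies in $V(Q_1) \cup V(C)$: the length-$3$ detour then repeats a vertex. Say $p_3 \notin V(Q_1) \cup V(C)$ and $p_4 = w_k \in V(Q_1)$. A natural replacement is $Q_2 = w_0 \cdots w_k p_3 w_{i+1} \cdots w_m$ (assuming $k < i$; a symmetric construction handles $k > i+1$), which contains $e_2 = p_3 p_2$ and introduces only the two new edges $w_k p_3 = p_4 p_3$ and $p_3 w_{i+1} = p_3 p_2$. However \cref{prop:symdiffweak} alone only yields the loose bound $|\Co(Q_2)| \leq |\Co(Q_1)| + 4$, so to prove $\Co(Q_2) \prec \Co(Q_1)$ I would either directly compute $|\Co(Q_2)|$ while tracking the parity of $|Q_2|$ versus $|Q_1|$ and the corresponding choice of $C$-arc, or produce an intermediate odd cycle of length $|\Co(Q_1)| + 2$. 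A natural candidate for the latter is the Eulerian symmetric difference $\Co(Q_1) \sd E(D)$, of size $|\Co(Q_1)| + 2$, which is a single simple odd cycle whenever $\Co(Q_1)$ and $D$ share no vertices beyond $\{p_1, p_2\}$. Careful case analysis on whether $p_4 \in V(C)$ or $p_4 \in V(Q_1) \setminus V(C)$, together with parity bookkeeping, constitutes the bulk of the remaining technical work.
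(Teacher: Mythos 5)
Your handling of the easy subcases matches the paper's argument: when the far endpoint of the adjacent edge lies in $W=V(Q_1)\cup V(C)$ you invoke \cref{prop:edgeinA-weak}, when both $p_3,p_4\notin W$ you reroute $e_1$ through the length-$3$ detour (correctly getting $|\Co(Q_2)|=|\Co(Q_1)|+2$ since endpoints and parity are preserved), and you reduce the opposite-edge case to two adjacent steps by transitivity of $\prec$. The genuine gap is the mixed subcase ($p_3\notin W$, $p_4\in W$), which you correctly identify as the obstacle but do not close. Your surgery $Q_2=w_0\cdots w_kp_3w_{i+1}\cdots w_m$ changes the ear's length by $k-i+1$, which can flip its parity; the odd cycle $\Co(Q_2)$ then uses the \emph{other} arc of $C$, and a direct computation gives $|\Co(Q_2)|-|\Co(Q_1)|=(k-i+1)+(|C|-2a_1)$ where $a_1$ is the arc used by $\Co(Q_1)$. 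This can genuinely equal $4$ (take $|C|-2a_1\geq 5$), so neither \cref{prop:symdiffweak} nor parity bookkeeping yields the needed $+2$ bound. Your fallback intermediate cycle $\Co(Q_1)\sd E(D)$ is a single cycle only when $p_3,p_4\notin V(\Co(Q_1))$, but when $p_4=w_k\in V(Q_1)$ we have $p_4\in V(\Co(Q_1))$, so $p_4$ has degree $4$ in the symmetric difference and the edge set can decompose into an odd cycle of the wrong (shorter) length plus an even cycle --- it fails exactly in the case where you need it.

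The paper avoids this by never performing the mixed-case surgery directly on $Q_1$. Instead it first applies \cref{prop:edgeinA-weak} to the \emph{other} adjacent edge $e_4=p_1p_4$ (legitimate since $p_4\in W$), obtaining an ear $Q_4\ni e_4$ with $\Co(Q_4)\prec\Co(Q_1)$ together with the crucial extra conclusions of that proposition: $V(Q_4)\subseteq V(Q_1)\cup V(C)$, and either $p_4p_1p_2$ is a subpath of $Q_4$ or $p_2\notin V(Q_4)\cup V(C)$. In the first alternative one substitutes $p_3$ for $p_1$ (valid since $p_3\notin V(Q_4)\cup V(C)$), preserving length and hence $|\Co(Q_2)|=|\Co(Q_4)|$; in the second one replaces the edge $p_1p_4$ of $Q_4$ by $p_1p_2p_3p_4$, adding exactly $2$ to the length with parity preserved. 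Each step changes the odd cycle length by $0$ or $2$, so $\prec$ follows without any appeal to an auxiliary intermediate cycle. You should restructure your mixed case along these lines (which is why \cref{prop:edgeinA-weak} carries those seemingly technical extra conclusions).
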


\begin{proof}
We first handle the case that $e_1$ and $e_2$ are adjacent edges of $D$. Write $D=x_1x_2x_3x_4$ with $e_1=x_1x_2$ and $e_2=x_2x_3$. We use casework on whether $x_3$ and $x_4$ are in the set $W=V(Q_1)\cup V(C)$.

If $x_3\in W$ then we may apply \cref{prop:edgeinA-weak} to construct $Q_2$.
If $x_3,x_4\notin W$, then construct $Q_2$ by replacing the subpath $x_1x_2$ of $Q_1$ with $x_1x_4x_3x_2$. Because $Q_1$ and $Q_2$ have the same endpoints and lengths of the same parity, it follows that $\Co(Q_1)\cap C=\Co(Q_2)\cap C$, and thus that $|\Co(Q_2)|=|\Co(Q_1)|+2$. Hence, $\Co(Q_2)\prec\Co(Q_1)$.

Lastly, suppose $x_3\notin W$ but $x_4\in W$. Write $e_4=x_4x_1$. By \cref{prop:edgeinA-weak}, there is a $C$-ear $Q_4$ with $\Co(Q_4)\prec\Co(Q_1)$. Moreover, $V(Q_4)\subseteq V(Q_1)\cup V(C)$, and either $Q_4$ contains $x_4x_1x_2$ as a subpath or $x_2$ is not in the set $W'=V(Q_4)\cup V(C)$. Notice that $x_3\notin W'$, as $W'\subseteq W$.
If $x_4x_1x_2$ is a subpath of $Q_4$ then we construct $Q_2$ by substituting $x_3$ for $x_1$ in $Q_4$. This yields a valid $C$-ear because $x_3\notin W'$. Furthermore, $|\Co(Q_2)|=|\Co(Q_4)|$, so $\Co(Q_2)\prec\Co(Q_1)$. If instead $x_2\notin W'$, then we construct $Q_2$ by replacing the subpath $x_1x_4$ of $Q_4$ with $x_1x_2x_3x_4$. This yields a valid $C$-ear because $x_2,x_3\notin W'$. Additionally, $|\Co(Q_2)|=|\Co(Q_4)|+2$ by the same reasoning as above, so $\Co(Q_2)\prec\Co(Q_4)\prec\Co(Q_1)$. Thus, if $e_1$ and $e_2$ are adjacent edges of $D$, then there is a $C$-ear $Q_2$ as desired.

Now, suppose $e_1$ and $e_2$ are opposite edges of $D$, and let $e'\in E(D)$ be adjacent to them both. Repeating the above argument twice, we locate a $C$-ear $Q'$ containing $e'$ with $\Co(Q')\prec\Co(Q_1)$, and a $C$-ear $Q_2$ containing $e_2$ with $\Co(Q_2)\prec\Co(Q')$. Then, $\Co(Q_2)\prec\Co(Q')\prec\Co(Q_1)$, as desired.
\end{proof}

We may now use \cref{prop:e1-to-e2-weak} to prove \cref{keylem2-weak}.

\begin{proof}[Proof of \cref{keylem2-weak}]
As discussed at the beginning of this section, every edge $e\in\ccl[H]{f}$ may be obtained from some edge $e_0\in E(C)$ via a sequence of the form
\[
e_0\in D_1\ni e_1\in D_2\ni\cdots\in D_m\ni e_m=e,
\]
where each $D_i$ is a $C_4$ in $H$ and each $e_i$ is an edge contained in both $D_i$ and $D_{i+1}$. We choose a shortest such sequence, guaranteeing that the cycles $D_2,\ldots,D_m$ are each edge-disjoint from $C$ and that $e_i\notin E(C)$ for $i\geq 1$.

We show by induction on $i\geq 1$ that every edge $e_i$ is on a $C$-ear $Q_i$ with $\Co(Q_i)\prec C$. For the base case $i=1$, observe that $e_1$ has at least one endpoint not in $V(C)$. Indeed, if both endpoints of $e_1$ were on $C$ then either $e_1\in E(C)$ (contradicting the minimality of $m$) or $e_1$ would be a chord of $C$ (creating an odd cycle shorter than $C$). It follows that there is a $C$-ear $Q_1$ of length 2 or 3 with $e_1\in E(Q_1)\subseteq E(D_1)\setminus\{e_0\}$.  We have that
\[
|\Co(Q_1)|\leq|E(Q_1)| + (|C|-1)\leq |C|+2,
\]
so $\Co(Q_1)\prec C$.

The inductive step follows directly from \cref{prop:e1-to-e2-weak}. Let $i>1$, and suppose that $e_{i-1}$ is on a $C$-ear $Q_{i-1}$ with $\Co(Q_{i-1})\prec C$. Applying \cref{prop:e1-to-e2-weak} to the pair $(e_{i-1},e_i)$ yields a $C$-ear $Q_i$ with $\Co(Q_i)\prec\Co(Q_{i-1})\prec C$. Thus, each edge $e_i$ is on a $C$-ear $Q_i$ with $\Co(Q_i)\prec C$. 

We claim that the cycle $\Co(Q_m)$ satisfies the desired conditions. Certainly $\Co(Q_m)$ contains $e=e_m$ as well as vertices of $C$. Moreover, because $|C|<2r+1$ and $H$ is $C_{2r+1}$-free, it follows that $|\Co(Q_m)|<2r+1$.
\end{proof}

\subsection{Proof of \cref{keylem3}}\label{s:keylem3}

Given a graph $H$ and a vertex $v$ of $H$, let $N_i(v)$ denote the set of vertices of $H$ at distance $i$ from $v$. We show that if $H$ is a $C_{2r+1}$-free graph and $v\in V(H)$ a fixed vertex, then each of the induced subgraphs $H[N_0(v)],H[N_1(v)],\ldots,H[N_r(v)]$ is $2r$-colorable. 

Our strategy is motivated by Thomassen \cite{Th07}'s proof that $\dhom(C_{2r+1})=0$ for any $r\geq 2$. A key step in the proof is to bound the chromatic number of any $C_{2r+1}$-free graph $H$ having radius at most 2 from a center $v$ --- thereby deriving a weaker version of \cref{keylem3} --- by separately considering the chromatic numbers of $H[N_1(v)]$ and $H[N_2(v)]$. We remark that, if $G$ is a $C_{2r+1}$-free graph with minimum degree $\del(G)\geq\al|G|$, then combining \cref{keylem3} with Thomassen's approach can derive better explicit bounds (in terms of $\al$) on $\chi(G)$.

To bound the chromatic number of each neighborhood, we use the following stronger property.
\begin{dfn}
	A graph $H'$ is \emph{$k$-degenerate} if every subgraph of $H'$ has a vertex of degree at most $k$.
\end{dfn}
It is well-known that any $k$-degenerate graph is $(k+1)$ colorable, so it suffices to show that each subgraph $H[N_i(v)]$ for $0\leq i\leq r$ is $(2r-1)$-degenerate.

\begin{lemma}\label{lem:degenerate-N}
Fix $r\geq 1$. Let $H$ be a $C_{2r+1}$-free graph and $v$ a vertex of $H$. Suppose $J$ is a connected nonempty subgraph of $H$ whose vertex set $W=V(J)$ satisfies $W\subseteq N_s(v)$ for some $s\leq r$. Then $J$ has minimum degree less than $2r$.
\end{lemma}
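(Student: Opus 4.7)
The plan is to prove the contrapositive by induction on $s$: assuming $\delta(J) \geq 2r$, I would construct a copy of $C_{2r+1}$ in $H$, contradicting $C_{2r+1}$-freeness. Setting $k = 2r+1-2s$, which is odd and at least $1$ since $s \leq r$, the core construction is to form a closed walk $Q_0 \cdot P \cdot Q_k^{-1}$ of length $2s + k = 2r+1$, where $P = w_0 w_1 \cdots w_k$ is a path of length $k$ in $J$ (which exists because $\delta(J) \geq 2r \geq k$) and $Q_0, Q_k$ are shortest paths of length $s$ in $H$ from $v$ to $w_0, w_k$. The interior vertices of $Q_0$ and $Q_k$ lie at BFS-levels $1, \ldots, s-1$ from $v$, whereas the vertices of $P$ lie in $N_s(v)$; these are automatically disjoint, so the walk is a $C_{2r+1}$ provided only that $Q_0$ and $Q_k$ share no internal vertex.

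The base case $s = 0$ is trivial, since $W = \{v\}$ forces $J$ to have no edges. For the base case $s = 1$, every vertex of $W$ is adjacent to $v$, and a path $w_0 \cdots w_{2r-1}$ of length $2r - 1$ in $J$ (which exists since $\delta(J) \geq 2r$) yields the $C_{2r+1}$ given by $v w_0 w_1 \cdots w_{2r-1} v$. For the inductive step $s \geq 2$, I would split into two cases: either (A) there exists $u \in N_{s-1}(v)$ with $W \subseteq N(u)$, in which case $W \subseteq N_1(u)$ and applying the inductive hypothesis with base vertex $u$ and parameter $s' = 1$ yields $\delta(J) < 2r$, a contradiction; or (B) for every $u \in N_{s-1}(v)$ there exists some $w \in W$ not adjacent to $u$, in which case the added flexibility should permit constructing the desired triple $P, Q_0, Q_k$ with $Q_0, Q_k$ internally disjoint.

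I expect the main obstacle to be the rigorous execution of Case (B): guaranteeing that $Q_0$ and $Q_k$ are \emph{fully} internally disjoint rather than merely differing at BFS-level $s-1$. The natural approach is to iterate the case analysis around any shared intermediate vertex: if some $u' \in N_t(v)$ for $t < s - 1$ lies on every feasible pair of shortest paths, then $u'$ serves as a new root and reduces the problem to a smaller value of $s$. An additional subtlety, since $k$ is odd, is ensuring that $w_0$ and $w_k$ admit a $J$-walk of odd length exactly $k$; when $J$ is bipartite this requires selecting $w_k$ on the appropriate side of the bipartition, and the no-common-ancestor hypothesis together with the large minimum degree of $J$ should allow this. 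I expect these points to be handled by a Menger-style argument on the shortest-path DAG from $v$, combined with a careful greedy extension of paths in $J$ exploiting the bound $\delta(J) \geq 2r$.
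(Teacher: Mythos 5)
Your base cases and Case (A) are fine, and you have correctly identified where the difficulty lies, but Case (B) contains a genuine gap that your sketch does not close. The failure of a single common parent at level $s-1$ does not produce a pair $w_0,w_k\in W$ admitting \emph{internally disjoint shortest} paths to $v$. A Menger-type argument for a given pair $\{w_0,w_k\}$ gives either two internally disjoint $v$--$\{w_0,w_k\}$ paths or a cut vertex, but (a) the disjoint paths it produces need not each have length exactly $s$, and your closed walk must have length exactly $2r+1$, so uncontrolled path lengths destroy the construction; (b) the cut vertex it produces depends on the chosen pair, so there need not be a single $u'$ with $W\subseteq N_{s-t}(u')$ that would let you re-root and induct --- the obstruction can be diffuse, with different pairs blocked by different vertices; and (c) the pair you are allowed to use is further constrained by parity, since you need a $J$-path of length exactly $k=2r+1-2s$ between the two chosen endpoints, and producing a path of \emph{prescribed} length between two \emph{specified} vertices in a graph of minimum degree $2r$ is itself not guaranteed by greedy extension. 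These three constraints interact, and nothing in your outline shows they can be met simultaneously; ``should permit'' and ``I expect these points to be handled'' are exactly where the proof is missing.

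For contrast, the paper's proof inverts the logic so that disjointness is never needed: it fixes one parent function $\alpha$ on the BFS layers and shows by downward induction on $t=s,\dots,0$ that $\alpha^t$ is constant on $W$. The closed walk $\alpha^t(w)\cdots\alpha(w)\,w\,P\,w'\,\alpha(w')\cdots\alpha^t(w')$ of length $2r+1$ must repeat a vertex because $H$ is $C_{2r+1}$-free, and the only possible repetition forces $\alpha^{t-1}(w)=\alpha^{t-1}(w')$; the contradiction comes at $t=0$, where the identity map cannot be constant on $W$. The prescribed-length problem is likewise handled indirectly: equality of $\alpha^{t-1}$ is first proved for pairs at $J$-distance exactly $2r-2t+1$, then transferred to pairs at distance $2$ by greedily growing a path out of their common neighbor and comparing both vertices to its far endpoint, and finally extended to all of $W$ by connectivity. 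If you wish to salvage a direct construction of a $C_{2r+1}$, you would need to supply complete arguments for the disjointness, the exact lengths, and the parity selection.
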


\begin{proof}
We proceed by contradiction. Suppose $J$ has minimum degree at least $2r$.

Fix a function $\alpha:V(H)\setminus\{v\}\to V(H)$ such that for each $i\geq 1$ and each $w\in N_i(v)$, the image $\alpha(w)$ is a ``parent'' of $w$, i.e.\ a neighbor of $w$ in $N_{i-1}(v)$. Observe that the function $\al^s:N_s(v)\to N_0(v)$ is constant on $W$, as each $w\in W$ is mapped to $v$. We show by induction on $t=s,s-1,\ldots,0$ that the function $\al^t:N_s(v)\to N_{s-t}(v)$ is constant on $W$. We then derive a contradiction by observing that the identity function $\al^0:N_s(v)\to N_s(v)$ is not constant on $W$.

Fix any $t$ with $0<t\leq s$ and suppose that $\al^t:N_s(v)\to N_{s-t}(v)$ is constant on $W$. Given $w,w'\in W$, we show $\al^{t-1}(w)=\al^{t-1}(w')$. We use casework on the length of a path between $w$ and $w'$ in $J$, first assuming that they are connected by a path of length exactly $2r-2t+1$ or 2, and then handling the general case.

Suppose $w$ and $w'$ are connected by a path $P$ of length $2r-2t+1$ in $J$. Consider the closed walk
\[
Q=\al^t(w)\al^{t-1}(w)\cdots\al(w)wPw'\al(w')\cdots\al^t(w')
\]
of length $2r+1$ with endpoints $\alpha^t(w)=\alpha^t(w')$. Because $H$ is $C_{2r+1}$-free, $Q$ is not a cycle, and hence visits some vertex twice. However, if $\al^{t-1}(w)$ and $\al^{t-1}(w')$ were distinct vertices, then we would have $\alpha^i(w)\neq \alpha^i(w')$ for each $0\leq i<t$, in which case $Q$ would repeat no vertex. It follows that $\al^{t-1}(w)=\al^{t-1}(w')$ for any two vertices $w,w'\in W$ connected by a path of length $2r-2t+1$ in $J$.

Now, suppose $w$ and $w'$ are connected by a path $wu_1w'$ of length 2 in $J$. Construct a path $u_1u_2\cdots u_{2r-2t+1}$ in $J$ as follows. For each $2\leq i\leq 2r-2t+1$, choose $u_i$ to be a neighbor of $u_{i-1}$ in $J$ distinct from the $i$ vertices $w,w',u_1,\ldots,u_{i-2}$. Such a vertex $u_i$ exists because there are at most $2r-2t+1\leq 2r-1$ forbidden vertices, but $u_{i-1}$ has degree at least $2r$ in $J$. The two vertices $w$ and $u_{2r-2t+1}$ are connected by a path of length $2r-2t+1$ in $J$, as are the two vertices $w'$ and $u_{2r-2t+1}$, so it follows that $\al^{t-1}(w)=\al^{t-1}(u_{2r-2t+1})=\al^{t-1}(w')$. Thus, $\al^{t-1}(w)=\al^{t-1}(w')$ for any two vertices $w,w'\in W$ connected by a path of length $2$ in $J$.

Now, consider a path $P=w_0\cdots w_k$ of arbitrary length in $J$. We will show that $\al^{t-1}(w_0)=\al^{t-1}(w_k)$. If $k$ is even, then
\[\al^{t-1}(w_0)=\al^{t-1}(w_2)=\cdots=\al^{t-1}(w_k),\]
 by writing $P$ as a concatenation of paths of length 2. If $k$ is odd and $k\geq 2r-2t+1$ then we have
\[\al^{t-1}(w_0)=\al^{t-1}(w_{2r-2t+1})=\al^{t-1}(w_{2r-2t+3})=\cdots=\al^{t-1}(w_k),\]
by writing $P$ as a concatenation of a path of length $2r-2t+1$ followed by some paths of length 2. Lastly, suppose $k$ is odd and $k<2r-2t+1$. Extend $P$ to a path $w_0\cdots w_kw_{k+1}\cdots w_{2r-2t+1}$ in $J$ as follows. For each $k<i\leq 2r-2t+1$, choose $w_i$ to be a neighbor of $w_{i-1}$ in $J$ distinct from the $i-1$ vertices $w_0,w_1,\ldots,w_{i-2}$. We then have
\[
\al^{t-1}(w_k)=\al^{t-1}(w_{k+2})=\cdots=\al^{t-1}(w_{2r-2t+1})=\al^{t-1}(w_0).
\]
Thus we have $\al^{t-1}(w_0)=\al^{t-1}(w_k)$, no matter the value of $k$. This completes the proof that if $\al^t$ is constant on $W$ then $\al^{t-1}$ is also constant on $W$.

Because $\al^s$ is constant on $W$, it follows by induction that $\al^{s-1},\al^{s-2},\ldots,\al^0$ are all constant on $W$. However, the identity function $\al^0:N_s(v)\to N_s(v)$ is not constant on $W$, because the minimum degree condition implies that $W$ contains at least $2r+1>1$ distinct vertices. This is the desired contradiction.
\end{proof}

\begin{proof}[Proof of \cref{keylem3}]
\Cref{lem:degenerate-N} implies that for each $s\leq r$, the induced subgraph $H[N_s(v)]$ is $(2r-1)$-degenerate and thus $2r$-colorable. Additionally, observe that if $|s-t|\geq 2$ then the sets $N_s(v)$ and $N_t(v)$ are disconnected. Thus, we may $(4r)$-color $H$ by using $2r$ colors to color the set
\[N_0(v)\cup N_2(v)\cup\cdots\cup N_{2\lfloor r/2\rfloor}\]
of vertices at even distance from $v$, and using a disjoint set of $2r$ colors for the set of vertices at odd distance from $v$.
\end{proof}

\section{The Borsuk Graph}\label{s:borsuk}

Having proven \cref{mainthm}, we turn our attention to the homomorphism threshold of odd cycles. To prove \cref{thm:dhom>0}, we consider a sequence of approximations to a Borsuk graph, and use \cref{mainthm} to show that this family has no finite $C_{2r+1}$-free homomorphic image.

Let $S^n$ denote the $n$-dimensional unit sphere. We equip $S^n$ with the metric $d$ measuring distance along the sphere; for example, any point $x\in S^n$ and its antipode $-x$ have distance $d(x,-x)=\pi$. We write $B_\eps(x)\subseteq S^n$ for the open ball of radius $\eps$ around any point $x\in S^n$. Denote the normalized surface measure of this ball by $\mu(n,\eps)$; that is, $\mu(n,\eps)$ is the fraction of the surface of $S^n$ covered by a spherical cap of radius $\eps$.

As defined in \cref{s:borsukintro}, the Borsuk graph $\BG(n,\eps)$ is an infinite graph on $S^n$, with an edge between two points $x,y$ if $d(x,-y)<\eps$. It is well known that $\BG(n,\eps)$ has chromatic number at least $n+2$, with equality if $\eps$ is sufficiently small, and that this fact is equivalent to the Borsuk--Ulam theorem (see \cite{ErHa67,Lo83}).
We further note that, if $\eps\leq \pi/(2r+1)$ for some integer $r$, then $\BG(n,\eps)$ is $C_{2r+1}$-free. Indeed, the $(2r+1)^\text{th}$ neighborhood of any vertex $x$ is $B_{(2r+1)\eps}(-x)$, which only contains $x$ when $(2r+1)\eps>\pi$.

Consider a family of increasingly good approximations to the $C_{2r+1}$-free graph $\BG\left(n,\frac{\pi}{2r+1}\right)$. The goal of this section is to apply \cref{mainthm} to show that this family has no finite $C_{2r+1}$-free homomorphic image. Approximations to Borsuk graphs are formally defined as follows.

\begin{dfn}\label{dfn:BG-approx}
Fix $n\geq 1$ and $0<\del<\eps$. Let $V\subseteq S^n$ be a finite set which is antipodally closed, i.e.\ for each $v\in V$, its antipode $-v$ is also in $V$. Let $G$ be the subgraph of $\BG(n,\eps)$ induced by $V$. If for each $x\in S^n$, there is some $v\in V$ with $d(v,x)<\del$, then we say $G$ is a \emph{$\del$-approximation} to $\BG(n,\eps)$.
\end{dfn}

Applying \cref{mainthm} requires three intermediate results. We show that if $\del$ is sufficiently small, then any $\del$-approximation $G$ to $\BG\left(n,\frac{\pi}{2r+1}\right)$ has large chromatic number (\cref{prop:approx-chi}) and is simply connected (\cref{prop:approx-simp-conn}). Additionally, if $\phi:G\to H$ is a graph homomorphism with $|H|$ sufficiently small in terms of $\del$, we show that $\phi$ acts noninjectively on some $C_{2r+3}$ of $G$ (\cref{prop:approx-noninj}).

\begin{prop}\label{prop:approx-chi}
Fix $n\geq 1$ and $0<\del<\eps/2$. If $G$ is a $\del$-approximation to $\BG(n,\eps)$ then $\chi(G)\geq n+2$.
\end{prop}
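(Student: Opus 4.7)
The plan is to reduce the claim to the Borsuk--Ulam theorem by constructing a graph homomorphism from a slightly smaller Borsuk graph into $G$. Recall that the section has already noted that $\chi(\BG(n,\eta)) \geq n+2$ for every $\eta > 0$ (this being equivalent to the Borsuk--Ulam theorem). Since $\del < \eps/2$ gives $\eps - 2\del > 0$, it therefore suffices to produce a graph homomorphism $\rho : \BG(n,\eps-2\del) \to G$, as then $\chi(G) \geq \chi(\BG(n,\eps-2\del)) \geq n+2$.

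To build $\rho$, I would use the $\del$-approximation hypothesis: for each $x \in S^n$, pick some vertex $\rho(x) \in V$ with $d(\rho(x),x) < \del$. If $xy$ is an edge of $\BG(n,\eps-2\del)$, i.e.\ $d(x,-y) < \eps - 2\del$, then the triangle inequality (combined with the isometry $z \mapsto -z$ giving $d(-y,-\rho(y)) = d(y,\rho(y)) < \del$) yields
\[
d(\rho(x),-\rho(y)) \leq d(\rho(x),x) + d(x,-y) + d(-y,-\rho(y)) < \del + (\eps-2\del) + \del = \eps.
\]
Hence $\rho(x)\rho(y)$ satisfies the edge condition of $\BG(n,\eps)$, which is exactly the condition to be an edge of the induced subgraph $G$, provided $\rho(x) \neq \rho(y)$.

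The only subtle point, and the main place where the hypothesis $\del < \eps/2$ is used nontrivially beyond keeping $\eps - 2\del$ positive, is verifying that $\rho(x) \neq \rho(y)$ on every edge $xy$ of $\BG(n,\eps-2\del)$. If instead $\rho(x) = \rho(y)$, then $d(x,y) < 2\del$ by the triangle inequality, while the edge condition together with $d(x,y) + d(x,-y) \geq d(y,-y) = \pi$ forces $d(x,y) > \pi - (\eps - 2\del)$. Combining these inequalities gives $\pi < \eps$, which does not hold in the regime of interest (and in particular fails for the Borsuk graphs $\BG(n,\pi/(2r+1))$ used later in this section). Thus $\rho$ is a genuine graph homomorphism, completing the argument. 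The main ``obstacle'' is really just this nondegeneracy check; the combinatorial content is entirely packaged inside the Borsuk--Ulam lower bound that has already been cited.
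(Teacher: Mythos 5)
Your proof is correct and follows essentially the same approach as the paper: define $\rho(x)$ to be a vertex of $G$ within distance $\del$ of $x$, verify via the triangle inequality that this gives a homomorphism $\BG(n,\eps-2\del)\to G$, and invoke Borsuk--Ulam for the lower bound on $\chi(\BG(n,\eps-2\del))$. The one addition you make --- explicitly checking $\rho(x)\neq\rho(y)$ on edges --- is left implicit in the paper; it is a reasonable thing to note, though an even shorter route to it is that $\rho(x)=\rho(y)$ together with the already-established bound $d(\rho(x),-\rho(y))<\eps$ would force $\pi<\eps$, which is excluded in the regime of interest.
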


\begin{proof}
We will construct a homomorphism $\phi:\BG(n,\eps-2\del)\to G$. Because $\BG(n,\eps-2\del)$ has chromatic number at least $n+2$, it will follow that $\chi(G)\geq n+2$.

For each vertex $x\in S^n$, let $\phi(x)$ be the vertex of $G$ nearest to $x$. Note that $d(x,\phi(x))<\del$ for each $x\in S^n$, because $G$ is a $\del$-approximation. We check that $\phi$ maps edges of $\BG(n,\eps-2\del)$ to edges of $G$. Indeed, if $x,y\in S^n$ satisfy $d(x,-y)<\eps-2\del$ then
\[
d(\phi(x),-\phi(y))\leq d(\phi(x),x)+d(x,-y)+d(-y,-\phi(y))<\eps,
\]
so $\phi(x)$ and $\phi(y)$ are adjacent in $G$. It follows that $\phi$ is a well-defined homomorphism, and consequently that $\chi(G)\geq\chi(\BG(n,\eps-2\del))\geq n+2$.
\end{proof}

Our remaining two intermediate results require us to study walks in Borsuk graphs. Because these walks bounce between almost-antipodes, we introduce the following notation, which will allow us to instead reason about sequences of vertices approximating continuous paths in $S^n$.

\begin{dfn}\label{dfn:borsuk-walk}
Let $v_0,\ldots,v_k$ be vertices of a Borsuk graph $\BG(n,\eps)$ satisfying $d(v_i,v_{i+1})<\eps$ for all $0\leq i<k$. We write $[v_0,\ldots,v_k]$ to denote the walk $v_0(-v_1)v_2(-v_3)\cdots(\pm v_k)$ in $\BG(n,\eps)$.
\end{dfn}

\begin{rmk}
\Cref{dfn:borsuk-walk} partially motivates the requirement that $\del$-approximations have antipodally closed vertex sets.
Indeed, suppose $G$ is a $\del$-approximation to a Borsuk graph $\BG(n,\eps)$, and let $v_0,\ldots,v_k\in V(G)$ satisfy $d(v_i,v_{i+1})<\eps$ for all $i$. Because $V(G)$ is antipodally closed, the walk $[v_0,\ldots,v_k]$ of $\BG(n,\eps)$ is also a walk in $G$.
\end{rmk}

We now prove our remaining two intermediate results.

\begin{prop}\label{prop:approx-noninj}
Fix integers $r,n\geq 1$ and let $\eps=\frac{\pi}{2r+1}$. For any integer $N$ there exists some $\delta=\delta(r,n,N)>0$ such that if $G$ is a $\del$-approximation to $\BG(n,\eps)$ and $\phi:G\to H$ is a graph homomorphism with $|H|\leq N$ then $\phi$ is noninjective on some $C_{2r+3}$ of $G$.
\end{prop}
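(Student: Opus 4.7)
The plan is to combine pigeonhole with an explicit cycle construction in the near-antipodal geometry of the Borsuk graph. Specifically, I will find two distinct vertices $v, v' \in V(G)$ with $\phi(v) = \phi(v')$ at a controlled spherical distance, and then build a $(2r+3)$-cycle through them in $G$; since $\phi(v) = \phi(v')$, such a cycle witnesses the desired non-injectivity.

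First I reduce to the case that $V(G)$ is $\delta$-separated by passing to the induced subgraph on a maximal $\delta$-separated antipodally closed subset of $V(G)$; this preserves the approximation property up to constant factors, and any $(2r+3)$-cycle produced there also lies in $G$. Under $\delta$-separation, $|V(G)| \geq c_n \delta^{-n}$, while $|V(G) \cap B_r(x)| \leq C_n (r/\delta)^n$ for any cap of radius $r \geq \delta$. Cover $S^n$ by a bounded number $K = K(n,\eps)$ of caps of radius $\eps/10$ and apply double pigeonhole (first to the caps, then to the vertices of $H$): some cap $B$ and some $u \in V(H)$ satisfy $|\phi^{-1}(u) \cap V(G) \cap B| \geq c_n / (KN\delta^n)$. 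For $\delta$ small enough in terms of $r$, $n$, $N$, this exceeds the packing bound $C_n(4r+3)^n$ for any ball of radius $(4r+3)\delta$, so I can extract two vertices $v, v' \in \phi^{-1}(u) \cap V(G) \cap B$ with $(4r+3)\delta < d(v, v') \leq \eps/5$ (the upper bound coming from the diameter of $B$).

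Using the notation of \cref{dfn:borsuk-walk}, I next construct the closed walk $[v, v_1, \ldots, v_{2r}, -v', -v, -v]$ of length $2r+3$ in $G$, where each $v_i$ for $1 \leq i \leq 2r$ is a vertex of $V(G)$ within $\delta$ of the point at arc-distance $i(\pi - d(v,v'))/(2r+1)$ along the (unique) shortest geodesic from $v$ to $-v'$ on $S^n$. The $\delta$-approximation property furnishes such vertices, and the lower bound $d(v,v') > (4r+3)\delta$ provides exactly the slack needed to keep every consecutive distance strictly below $\eps$ after $\delta$-perturbation; the final transitions satisfy $d(-v',-v) = d(v,v') < \eps$ and $d(-v,-v) = 0$. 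This closed walk is a genuine $(2r+3)$-cycle: the ideal geodesic positions are pairwise separated by at least $(14/15)\eps \gg 2\delta$, so the $v_i$ are pairwise distinct in $V(G)$; and the only antipodal ideal pair $v_0 = v$ and $v_{2r+2} = -v$ both carry even indices, so the sign-alternating cycle vertices $w_i = (-1)^i v_i$ remain distinct. Since $\phi(v) = \phi(v') = u$ and $v \neq v'$, the resulting cycle $C$ satisfies $|\phi(V(C))| \leq 2r+2$.

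The main obstacle is arranging the two inequalities $(4r+3)\delta < d(v, v') \leq \eps/5$ on the pigeonholed pair simultaneously. The upper bound is automatic from placing $v, v'$ in a common $\eps/10$-cap, but the lower bound crucially depends on the $\delta$-separation reduction together with the packing estimate, which is where the passage to a separated sub-approximation pays off. Both are accommodated by taking $\delta$ sufficiently small in terms of $r$, $n$, and $N$.
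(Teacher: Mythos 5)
Your proposal is correct and follows essentially the same two-step strategy as the paper: a pigeonhole/packing argument producing a $\phi$-monochromatic pair $v,v'$ with $d(v,v')$ bounded between roughly $(4r+2)\del$ and $O(\eps)$, followed by an explicit $C_{2r+3}$ obtained by discretizing a geodesic from $v$ to $-v'$ in $2r+1$ steps of length just under $\eps$ and closing up through $v'$ and $-v$. The only substantive difference is cosmetic: the paper verifies that the resulting closed walk is a cycle by invoking the odd girth of $\BG\left(n,\frac{\pi}{2r+1}\right)$ rather than by your geometric separation estimates (which, as stated, do not literally hold for the pair $-v',-v$, though those vertices are unperturbed and trivially distinct).
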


\begin{proof}
Let $\del$ be a small constant, to be determined, and set $\del_1=(4r+2)\del$ and $\del_2=\eps-2\del$. We first show that if $\del$ is sufficiently small then there are vertices $v,v'\in V(G)$ with $\phi(v)=\phi(v')$ and $\del_1\leq d(v,v')\leq 2\del_2$. Using this, we then construct a $C_{2r+3}$ in $G$ containing both $v$ and $v'$.

Choose some $x_0\in S^n$ and let $W$ be a maximal subset of $V(G)\cap B_{\del_2}(x_0)$ for which $d(x,x')\geq \del_1$ for all $x,x'\in W$. We claim that the family $\left\{B_{\del_1+\del}(x):x\in W\right\}$ covers $B_{\del_2-\del}(x_0)$. Suppose for the sake of contradiction that some $y\in B_{\del_2-\del}(x_0)$ had distance at least $\del_1+\del$ from each point of $W$. Then there would be a point $x\in V(G)\setminus W$ within distance $\del$ of $y$, so $x$ would be in $B_{\del_2}(x_0)$ and have distance at least $\del_1$ from each point of $W$. However, this contradicts the maximality of $W$.

We now have that $\left\{B_{\del_1+\del}(x):x\in W\right\}$ covers $B_{\del_2-\del}(x_0)$. Summing the measures of each set in the cover, we see that $\mu(n,\del_1+\del)\times|W|\geq\mu(n,\del_2-\del)$. As $\del\to 0$, causing $\del_1\to 0$ and $\del_2\to\eps$, the ratio $\mu(n,\del_2-\del)/\mu(n,\del_1+\del)$ goes to infinity. Thus, choosing $\del$ small enough yields
\[
N<\frac{\mu(n,\del_2-\del)}{\mu(n,\del_1+\del)}\leq|W|.
\]
By the pigeonhole principle, two vertices $v,v'\in W$ must satisfy $\phi(v)=\phi(v')$, as $H$ has only $N$ vertices. Moreover, we have $\del_1\leq d(v,v')\leq 2\del_2$ by definition of $W$.

We now construct a $C_{2r+3}$ in $G$ containing both $v$ and $v'$. Let $\theta=d(v,v')$. Let $\phi:S^1\to S^n$ be an isometric embedding of the unit circle in $S^n$ such that $\phi(0)=v$ and $\phi(-\theta)=v'$, where 0 and $-\theta$ refer to the points in $S^1$ with those radial angles. Define a sequence $x_0,\ldots,x_{2r+3}$ of points in $S^n$ with
\[
\begin{array}{c}
x_0=\phi(0),\ 
\ldots,\ 
x_i=\phi\left(\frac{i(\pi-\theta)}{2r+1}\right),\ 
\ldots,\ 
x_{2r+1}=\phi(\pi-\theta),\ 
\\[5pt]x_{2r+2}=\phi\left(\pi-\frac\theta 2\right),\ 
x_{2r+3}=\phi(\pi).
\end{array}
\]

Observe that for $0\leq i<2r+1$, we have
\[d(x_i,x_{i+1})=\frac{\pi-\theta}{2r+1}\leq\frac{\pi-\del_1}{2r+1}=\eps-2\del.\]
For $i=2r+1$ and $i=2r+2$, we also have
\[
d(x_i,x_{i+1})=\frac{\theta}2\leq\del_2=\eps-2\del.
\]
Let $v_i$ be the vertex of $G$ closest to $x_i$ for each $0\leq i\leq 2r+3$. Because $G$ is a $\del$-approximation, it follows that $d(v_i,v_{i+1})<2\del+d(x_i,x_{i+1})\leq\eps$ for all $i$. Therefore, $P=[v_0,v_1,\ldots,v_{2r+3}]$ is a walk in $G$.

We observe that $x_0=-x_{2r+3}=v$ and that $x_{2r+1}=-v'$. It follows that $v_i=x_i$ at these three indices, so $P$ has the form
\[
[v_0,v_1,\ldots,v_{2r+3}]=v_0(-v_1)\cdots (-v_{2r+1})v_{2r+2}(-v_{2r+3})=v(-v_1)\cdots v'v_{2r+2}v.
\]
In particular, $P$ is a closed walk of length $2r+3$ containing both $v$ and $v'$. Because $\BG\left(n,\frac{\pi}{2r+1}\right)$ contains no odd cycles shorter than $C_{2r+3}$, $P$ cannot repeat any vertices. Thus, this is a $C_{2r+3}$ on which $\phi$ acts noninjectively.
\end{proof}


\begin{prop}\label{prop:approx-simp-conn}
Fix any $n\geq 2$ and $0<\del<\eps/3$. If $G$ is a $\del$-approximation to $\BG(n,\eps)$ then $G$ is simply connected.
\end{prop}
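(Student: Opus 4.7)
The plan is to reduce the claim to showing that every closed walk in $G$ of even length is null-homotopic (homotopic to a length-$0$ walk), and then to prove the reduced statement by importing the simple connectedness of $S^n$ (which holds because $n \geq 2$) via discretization. The reduction is standard: given walks $P, P'$ with the same endpoints and same parity, the concatenation $P \cdot (P')^{-1}$ is an even closed walk; if it is null-homotopic, then $P \simeq P \cdot (P')^{-1} \cdot P' \simeq P'$, using that $Q^{-1} \cdot Q$ is null-homotopic for any walk $Q$ by iterated deletion from the middle outward.

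For the reduced statement, fix an even closed walk $C$ and write it via \cref{dfn:borsuk-walk} as $[v_0, v_1, \ldots, v_k]$ with $k$ even and $v_0 = v_k$. The sequence $v_0, \ldots, v_k$ traces out a closed polygonal loop $\gamma$ in $S^n$ via unique shortest geodesics (well-defined since $d(v_i, v_{i+1}) < \epsilon < \pi$). Because $\pi_1(S^n) = 0$, $\gamma$ extends to a continuous null-homotopy $H : [0, 1]^2 \to S^n$ with $H(\cdot, 0) = \gamma$, $H(\cdot, 1) \equiv v_0$, and $H(0, \cdot) \equiv H(1, \cdot) \equiv v_0$. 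The goal is to discretize $H$ on a sufficiently fine rectangular grid: for each grid point $(i/M, j/N)$, select $w_{i, j} \in V(G)$ within $\delta$ of $H(i/M, j/N)$ (possible by the $\delta$-approximation hypothesis), with $w_{i, 0} := v_i$ and $w_{i, N} := v_0$ on the boundary. Each row yields a walk $W_j := [w_{0, j}, \ldots, w_{M, j}]$, and between adjacent rows one transforms $W_j \to W_{j+1}$ by a left-to-right sequence of single-vertex substitutions. Iterating over $j$ gives $C = W_0 \simeq W_N = [v_0, v_0, \ldots, v_0]$, and this final walk (bouncing between $v_0$ and $-v_0$) reduces to the length-$0$ walk at $v_0$ via iterated deletions of spurs $v_0 (-v_0) v_0$, valid because $-v_0 \in V(G)$ by antipodal closure.

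The main obstacle is ensuring that each intermediate substitution is a valid homotopy move per \cref{dfn:homotopy}. A substitution of the $i$-th entry from $w_{i, j}$ to $w_{i, j+1}$ requires $d(w_{i, j+1}, w_{i - 1, j + 1}) < \epsilon$ and $d(w_{i, j+1}, w_{i + 1, j}) < \epsilon$; once the alternating signs of the $[\cdot]$-notation are tracked, each $\BG(n, \epsilon)$-adjacency condition reduces to an unsigned spherical distance bound of $\epsilon$. The triangle inequality gives each such distance as at most $2\delta + \eta$, where $\eta$ is the maximum $H$-distance between adjacent (including diagonally adjacent) grid points. It suffices to arrange $\eta < \epsilon - 2\delta$, which is positive since $\delta < \epsilon/3$. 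Vertically this follows from uniform continuity of $H$ on the compact square; horizontally, however, the initial bottom row has spacings $d(v_i, v_{i+1})$ that may be close to $\epsilon$. To handle this, pre-process $C$ by insertion-and-substitution moves that successively interpolate new vertices along the geodesic from $v_i$ to $v_{i+1}$, using the $\delta$-approximation property to guarantee intermediate vertices exist in $V(G)$; this lengthens $C$ by a finite amount while preserving both parity and homotopy class, and reduces all horizontal spacings below $\epsilon - 2\delta$. With this refinement in place, a sufficiently fine grid makes every substitution valid, completing the argument.
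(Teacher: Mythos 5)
Your overall strategy is the same as the paper's: lift the walk to a piecewise-geodesic loop in $S^n$, use the simple connectedness of $S^n$ to get a continuous (null-)homotopy, discretize it on a grid of nearby vertices, and pass between consecutive rows by single-vertex substitutions, finishing with spur deletions. Your preliminary reduction to even closed walks is fine and only cosmetically different from the paper, which works with the two walks $P,P'$ directly. The place where you diverge --- and where the argument as written has a gap --- is the interface between the original walk and the grid.

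You diagnose the horizontal problem as coming only from the bottom row (``the initial bottom row has spacings that may be close to $\eps$'') and propose to cure it by refining $C$ before discretizing. But the horizontal constraint is not about the bottom row alone: the substitutions in \emph{every} row require $d\bigl(H(\tfrac{i}{M},y),H(\tfrac{i+1}{M},y)\bigr)<\eps-2\del$ at all heights $y$, i.e.\ the horizontal mesh $1/M$ must be small relative to the modulus of uniform continuity of the null-homotopy $H$ --- a quantity known only after $H$ is chosen, while $H$ is chosen for the loop determined by the (refined) walk. As written the quantifiers are circular. The natural fix is to fix $H$ for the original loop $\gamma$ first, refine the horizontal mesh to beat its modulus of continuity, and take the bottom row to be the nearest-vertex discretization of $\gamma$ at that mesh; but then the bottom row is no longer your refined walk by fiat, and you still owe a proof that this fine discretization of $\gamma$ is homotopic to $C$. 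That is exactly the step the paper carries out explicitly (showing $Q_0$ is homotopic to $P$), and it needs more than ``the $\del$-approximation gives a nearby vertex'': a vertex within $\del$ of a geodesic point near $v_{i+1}$ is only guaranteed to be within $d(v_i,v_{i+1})+\del$ of $v_i$, which can exceed $\eps$. The paper's key observation is that the \emph{nearest} vertex $w$ to a point $p$ on the geodesic from $v_i$ to $v_{i+1}$ satisfies $d(p,w)\le d(p,v_{i+1})$ because $v_{i+1}\in V(G)$, whence $d(v_i,w)\le d(v_i,p)+d(p,v_{i+1})=d(v_i,v_{i+1})<\eps$; this inequality is what legalizes the substitutions collapsing such a segment. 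Your ``insertion-and-substitution'' refinement needs the same inequality (or a two-sided variant splitting the geodesic at its midpoint). With these two repairs your proof goes through and essentially coincides with the paper's.
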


\begin{proof}
Let $P=[v_0,\ldots,v_k]$ and $P'=[v'_0,\ldots,v'_\ell]$ be walks in $G$ with $k\equiv \ell\pmod 2$ and additionally satisfying $v_0=v'_0$ and $v_k=v'_\ell$. To show that $G$ is simply connected, we must show that $P$ and $P'$ are homotopic in $G$. Our proof extends the discrete walks $P$ and $P'$ to continuous paths $f$ and $f'$ in $S^n$, takes a continuous homotopy $F$ between $f$ and $f'$, and lastly discretizes $F$ to prove a homotopy equivalence between $P$ and $P'$.

Let $f:[0,1]\to S^n$ be a continuous function with $f(\frac{i}k)=v_i$ for each $0\leq i\leq k$, such that $f$ restricts to a geodesic on each interval $\left[\frac ik,\frac{i+1}k\right]$. Analogously, let $f':[0,1]\to S^n$ be a continuous extension of $P'$. Because $S^n$ is simply connected when $n\geq 2$, there is a continuous homotopy $F:[0,1]^2\to S^n$ between $f$ and $f'$. That is, $F$ is a continuous function with boundary conditions $F(0,-)=f$ and $F(1,-)=f'$, which also satisfies $F(x,0)=v_0$ and $F(x,1)=v_k$ for all $x\in[0,1]$.

Because $F$ is a continuous function on a compact domain, $F$ is uniformly continuous. Thus, there is $\gamma>0$ such that if $x,x',y,y'\in[0,1]$ satisfy $|x-x'|<\gamma$ and $|y-y'|<\gamma$ then $d(F(x,y),F(x',y'))<\del$. Choose sequences $0=x_0\leq x_1\leq\dots\leq x_s=1$ and $0=y_0\leq y_1\leq\dots\leq y_t=1$ such that $x_{i+1}-x_i<\gamma$ for all $0\leq i<s$ and $y_{j+1}-y_j<\gamma$ for all $0\leq j<t$. We additionally require that each of the fractions $\frac 1k,\ldots,\frac{k-1}k$ and $\frac 1\ell,\ldots,\frac{\ell-1}\ell$ occurs at least twice in the sequence $y_0,\ldots,y_t$ and that $t\equiv k\equiv\ell\pmod 2$; these conditions may be achieved by adding more elements to the sequence $y_0,\ldots,y_t$.

We now present a homotopy equivalence between $P$ and $P'$. We construct walks $Q_0,\ldots,Q_s$ of $G$ such that $Q_i$ is a discrete approximation of the continuous path $F(x_i,-)$. We  then show that $Q_i$ is homotopic to $Q_{i+1}$ for all $0\leq i<s$, and that $Q_0$ is homotopic to $P$ and $Q_s$ is homotopic to $P'$.

For all $0\leq i\leq s$ and $0\leq j\leq t$, define $w_{i,j}$ to be the vertex of $G$ closest to $F(x_i,y_j)$. Observe that $w_{i,0}=v_0$ and $w_{i,t}=v_k$ for all $i$. Moreover, if $|i_1-i_2|,|j_1-j_2|\leq 1$ then $|x_{i_1}-x_{i_2}|,|y_{j_1}-y_{j_2}|<\gamma$, so
\[
d\left(w_{i_1,j_1},w_{i_2,j_2}\right)<d\left(w_{i_1,j_1},F(x_{i_1},y_{j_1})\right)+d\left(F(x_{i_1},y_{j_1}),F(x_{i_2},y_{j_2})\right)+d\left(F(x_{i_2},y_{j_2},w_{i_2,j_2})\right)< 3\del<\eps.
\]
For $0\leq i\leq s$, let $Q_i=[w_{i,0},w_{i,1},\ldots,w_{i,t}]$. The prior inequality implies that $Q_i$ is in fact a walk of $G$.

We show that $Q_i$ is homotopic to $Q_{i+1}$ via the following sequence of substitution steps. 
\begin{align*}
Q_i=&\ [w_{i,0},w_{i,1},\ldots,w_{i,t}]=[w_{i+1,0},w_{i,1},\ldots,w_{i,t}]
\subarrow[w_{i+1,0},w_{i+1,1},w_{i,2},\ldots,w_{i,t}]
\\&\subarrow\cdots\subarrow [w_{i+1,0},\ldots,w_{i+1,j},w_{i,j+1},\ldots,w_{i,t}]\subarrow\cdots
\\&\subarrow [w_{i+1,0},\ldots,w_{i+1,t-1},w_{i,t}]=[w_{i+1,0},\ldots,w_{i+1,t}]=Q_{i+1}.
\end{align*}
At the $j^\text{th}$ step, we replace $w_{i,j}$ with $w_{i+1,j}$. This is a valid substitution because we have shown that both $w_{i,j}$ and $w_{i+1,j}$ are within $\eps$ of the vertices $w_{i+1,j-1}$ and $w_{i,j+1}$ on either side of the substitution.

Thus, it only remains to show that $Q_0$ is homotopic to $P$ --- which, by symmetry, also shows that $Q_s$ is homotopic to $P'$.
Write $w_j$ for $w_{0,j}$. We claim there are indices $0=j_0<j_1<\cdots<j_k=t$ such that, for all $0\leq i\leq k$, we have $j_i\equiv i\pmod 2$ and $w_{j_i}=v_i$. For $0<i<k$, such a $j_i$ exists because of the constraint that $y_j=y_{j+1}=\frac ik$ for some index $j$, allowing us to choose $j_i\in\{j,j+1\}$ of the appropriate parity. Additionally, $j_k=t$ has the correct parity because our construction stipulates that $t\equiv k\pmod 2$.

We claim that, for each $i$, the walks $[v_i,v_{i+1}]$ and $[w_{j_i},w_{j_i+1},\ldots,w_{j_{i+1}}]$ are homotopy equivalent. 
For any index $j_i\leq h\leq j_{i+1}$, notice that
\begin{align*}
d(v_i,w_h)
&\leq d(v_i,f(y_h)) + d(f(y_h),w_h)
\leq d(v_i,f(y_h)) + d(f(y_h),v_{i+1}) 
=d(v_i,v_{i+1})<\eps.
\end{align*}
The second inequality follows because $w_h$ is the vertex of $G$ nearest to $f(y_h)$, and the subsequent equality holds because $f(y_h)$ is on a geodesic between $v_i$ and $v_{i+1}$. Hence, we may perform the following sequence of substitutions on $[w_{j_i},w_{j_i+1},\ldots,w_{j_{i+1}}]$.
\begin{align*}
&[w_{j_i},w_{j_i+1},\ldots,w_{j_{i+1}}]=[v_i,w_{j_i+1},w_{j_i+2},\ldots,w_{j_{i+1}}]
\subarrow [v_i,v_i,w_{j_i+2},\ldots,w_{j_{i+1}}]
\\&\subarrow [v_i,v_i,v_i,w_{j_i+3},\ldots,w_{j_{i+1}}]
\subarrow\cdots\subarrow[v_i,\ldots,v_i,w_{j_{i+1}}]=[v_i,\ldots,v_i,v_{i+1}]
\end{align*}
Repeated deletions then transform this walk (which has odd length $j_{i+1}-j_i$) into the single edge $[v_i,v_{i+1}]$, thus showing that $[w_{j_i},w_{j_i+1},\ldots,w_{j_{i+1}}]$ and $[v_i,v_{i+1}]$ are homotopic walks. One may show that their antipodal images $[-v_i,-v_{i+1}]$ and $[-w_{j_i},-w_{j_i+1},\ldots,-w_{j_{i+1}}]$ are homotopic by performing an analogous sequence of substitution and deletion operations.

Observe that $P$ may be written as a concatenation of edges $(-1)^i[v_i,v_{i+1}]=[(-1)^iv_i,(-1)^iv_{i+1}]$ and $Q_0$ as a concatenation of subwalks $(-1)^i[w_{j_i},\ldots,w_{j_{i+1}}]$. The $k$ terms in each concatenation are pairwise homotopic, yielding a homotopy equivalence between $P$ and $Q_0$. An analogous argument shows $Q_s$ and $P'$ are homotopic, completing the proof that $P$ and $P'$ are homotopic.
\end{proof}

Combining \cref{prop:approx-chi,prop:approx-noninj,prop:approx-simp-conn} with \cref{mainthm} lets us lower-bound $\dhom(C_{2r+1})$.

\begin{theorem}\label{thm:approx-seq}
Fix $r\geq 2$, and set $n=8r^2-2$ and $\eps=\frac{\pi}{2r+1}$. Let $\eps/3>\delta_1>\delta_2>\ldots$ be a sequence monotonically decreasing to 0 and let $G_1,G_2,\ldots$ be a sequence of graphs such that $G_i$ is a $\delta_i$-approximation to $\BG(n,\eps)$. Then the family $G_1,G_2,\ldots$ has no finite $C_{2r+1}$-free homomorphic image.
\end{theorem}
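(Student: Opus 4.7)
The plan is a straightforward contradiction argument combining the three propositions of this section with \cref{mainthm}. Suppose for contradiction that there exists a finite $C_{2r+1}$-free graph $H$ which is a homomorphic image of every $G_i$, and set $N = |H|$. I will show that for all sufficiently large $i$, the graph $G_i$ simultaneously satisfies the hypotheses of \cref{mainthm} (yielding $\chi(G_i) < 8r^2$) and has $\chi(G_i) \geq n+2 = 8r^2$, a contradiction.

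First, I apply \cref{prop:approx-noninj} to obtain a threshold $\delta = \delta(r, n, N) > 0$ such that any $\delta$-approximation to $\BG(n, \eps)$ admitting a homomorphism to a graph on at most $N$ vertices is noninjective on some $C_{2r+3}$. Since $\delta_i \to 0$, I choose $i$ large enough that $\delta_i < \min(\delta, \eps/3)$. For this $i$, fix a homomorphism $\phi: G_i \to H$. Then $\phi$ is noninjective on some copy of $C_{2r+3}$ in $G_i$; call it $C$. Since $C$ has $2r+3$ vertices but $\phi$ identifies at least two of them, $|\phi(V(C))| \leq 2r+2$.

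Next, I verify the hypotheses of \cref{mainthm} for $G = G_i$ and the homomorphism $\phi: G_i \to H$. Simple connectedness of $G_i$ follows from \cref{prop:approx-simp-conn} (applicable because $\delta_i < \eps/3$ and $n = 8r^2 - 2 \geq 2$). The graph $H$ is $C_{2r+1}$-free by assumption, and the odd cycle $C$ constructed above satisfies $|\phi(V(C))| \leq 2r+2$. Therefore \cref{mainthm} yields $\chi(G_i) < 8r^2$.

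On the other hand, \cref{prop:approx-chi} gives $\chi(G_i) \geq n + 2 = 8r^2$, since $\delta_i < \eps/3 < \eps/2$. These two bounds are incompatible, completing the contradiction. The proof is mostly bookkeeping — the heavy lifting is done by the three structural propositions and \cref{mainthm}; there is no single hard step, merely a careful choice of $i$ large enough to simultaneously satisfy $\delta_i < \eps/3$ (for simple connectedness) and $\delta_i < \delta(r, n, N)$ (for forcing $\phi$ to collapse a $C_{2r+3}$), the latter being the key place where the finiteness of $H$ is used.
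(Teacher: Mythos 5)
Your proof is correct and follows essentially the same route as the paper: invoke \cref{prop:approx-noninj} to collapse a $C_{2r+3}$ for large $i$, check simple connectedness via \cref{prop:approx-simp-conn}, apply \cref{mainthm} to get $\chi(G_i)<8r^2$, and contradict the lower bound from \cref{prop:approx-chi}. No issues.
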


\begin{proof}
Suppose on the contrary there were a finite $C_{2r+1}$-free graph $H$ together with graph homomorphisms $\phi_k:G_k\to H$ for each $k$. Applying \cref{prop:approx-noninj} with $N=|H|$, there is $k$ sufficiently large that $\phi_k$ is noninjective on some $C_{2r+3}$ of $G_k$. Moreover, $G_k$ is simply connected by \cref{prop:approx-simp-conn}.

Applying \cref{mainthm} to $\phi_k:G_k\to H$, we see that $\chi(G_k)<8r^2$. However, $\chi(G_k)\geq n+2=8r^2$ by \cref{prop:approx-chi}, yielding the desired contradiction.
\end{proof}

Let $r$, $n$, and $\eps$ be as in \cref{thm:approx-seq}. To prove \cref{thm:dhom>0}, we apply \cref{thm:approx-seq} to a sequence of random graphs approximating $\BG(n,\eps)$. This lower-bounds $\dhom(C_{2r+1})$ by $\mu(\eps,n)$, the fraction of $S^n$ covered by a ball of radius $\eps$.

\begin{corollary}\label{cor:bound-dhom}
Fix $r\geq 2$, and set $n=8r^2-2$ and $\eps=\frac{\pi}{2r+1}$. Then $\dhom(C_{2r+1})\geq\mu(n,\eps)$.
\end{corollary}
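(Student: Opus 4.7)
The plan is to produce, for any $\alpha < \mu(n,\eps)$, an infinite family of $C_{2r+1}$-free graphs with minimum degree at least $\alpha$ times their number of vertices, having no finite $C_{2r+1}$-free homomorphic image. By the definition of $\dhom$ this will force $\dhom(C_{2r+1}) > \alpha$, and letting $\alpha \nearrow \mu(n,\eps)$ will complete the proof. The natural source of such a family is a sequence of random finite approximations to $\BG(n,\eps)$, and the goal is to verify that with positive probability (in fact probability tending to 1) such an approximation simultaneously satisfies the three properties required to invoke \cref{thm:approx-seq} together with the minimum-degree hypothesis.

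Concretely, for each $k$ I would sample $N_k$ points uniformly and independently from $S^n$ and take $V_k \subseteq S^n$ to be the union of these points with their antipodes, so that $|V_k| \leq 2N_k$ and $V_k$ is antipodally closed by construction. Let $G_k$ be the subgraph of $\BG(n,\eps)$ induced on $V_k$; since $\BG(n,\eps)$ is $C_{2r+1}$-free when $\eps = \pi/(2r+1)$, so is every $G_k$. Next I would check two probabilistic estimates as $N_k \to \infty$:
\begin{itemize}
\item (Covering.) A standard net argument: cover $S^n$ by $O(\delta_k^{-n})$ balls of radius $\delta_k/2$, and apply Chernoff plus a union bound to conclude that every such ball contains a sampled point with high probability, so $G_k$ is a $\delta_k$-approximation to $\BG(n,\eps)$ for some sequence $\delta_k \to 0$.
\item (Minimum degree.) The neighborhood of any fixed $v \in V_k$ in $\BG(n,\eps)$ is the ball $B_\eps(-v)$, which has normalized surface measure exactly $\mu(n,\eps)$. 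Thus $\deg_{G_k}(v)$ is essentially binomial with mean $\mu(n,\eps) \cdot (|V_k|-O(1))$, and by Chernoff plus a union bound over the $O(N_k)$ vertices, $\delta(G_k) \geq \alpha |G_k|$ for all $v$ with probability tending to 1, as long as $\alpha < \mu(n,\eps)$.
\end{itemize}
Choosing $N_k$ large enough that both events hold and $\delta_k < \eps/3$, I may fix such realizations $G_1, G_2, \ldots$.

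The resulting family $\{G_k\}$ is $C_{2r+1}$-free, satisfies $\delta(G_k) \geq \alpha |G_k|$, and forms a sequence of $\delta_k$-approximations to $\BG(n,\eps)$ with $\delta_k \to 0$ and $\delta_k < \eps/3$. By \cref{thm:approx-seq}, no finite $C_{2r+1}$-free graph $\Gamma$ is a common homomorphic image of all $G_k$. Since the family witnesses that no such $\Gamma$ exists at density $\alpha$, the definition of the homomorphism threshold gives $\dhom(C_{2r+1}) > \alpha$; letting $\alpha \nearrow \mu(n,\eps)$ yields the claimed bound.

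The proof is essentially probabilistic bookkeeping, and the only mild technical point is synchronizing the three requirements ($C_{2r+1}$-freeness is automatic, but the $\delta_k$-approximation property and the minimum-degree lower bound must hold simultaneously); this is handled cleanly by a single union bound. The real content lies upstream, in \cref{thm:approx-seq} and ultimately \cref{mainthm}, so here there is no deep obstacle to overcome.
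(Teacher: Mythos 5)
Your proposal is correct and follows essentially the same route as the paper: sample random points on $S^n$, close under antipodes, verify via Chernoff plus union bound that the result is simultaneously a $\delta_k$-approximation and has minimum degree close to $\mu(n,\eps)|G_k|$, then invoke \cref{thm:approx-seq} and take a limit in the density parameter. One tiny imprecision: exhibiting a family with density ratio $\geq \alpha$ and no finite $C_{2r+1}$-free homomorphic image only yields $\dhom(C_{2r+1}) \geq \alpha$ rather than the strict inequality you state, but since you then let $\alpha \nearrow \mu(n,\eps)$ the final conclusion is unaffected.
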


\begin{proof}
Fix a sequence $\eps/4>\del_1>\del_2>\cdots$ monotonically decreasing to zero. We construct $\delta_k$-approximations $G_k$ to $\BG(n,\eps)$ such that $\del(G_k)/|G_k|$ approaches $\mu(n,\eps)$.

Let $N$ be a sufficiently large integer. Choose $N$ points $x_1,\ldots,x_N\in S^n$ uniformly at random, and let $G_k$ be the subgraph of $\BG(n,\eps)$ induced by the vertex set $V(G_k)=\{x_1,-x_1,\ldots,x_N,-x_N\}$. With high probability as $N\to\infty$, the graph $G_k$ will be a $\del_k$-approximation to $\BG(n,\eps)$. Furthermore, any vertex $v\in V(G)$ has expected degree $1+\mu(n,\eps)(2N-2)$. A Chernoff bound implies that, with high probability as $N\to\infty$, every vertex $v\in V(G_k)$ satisfies
\[\deg(v)=(2+o(1))\mu(n,\eps)\cdot N=(1+o(1))\mu(n,\eps)\cdot|G_k|.\]
Thus, taking $N$ large enough, this construction yields a $\del_k$-approximation $G_k$ to $\BG(n,\eps)$ with minimum degree
\[
\del(G_k)\geq(1-\del_k)\mu(n,\eps)\cdot|G_k|.
\]

By \cref{thm:approx-seq}, the family $G_1,G_2,\ldots$ has no finite $C_{2r+1}$-free homomorphic image. Thus,
\[
\dhom(C_{2r+1})\geq\limsup_{k\to\infty}\frac{\del(G_k)}{|G_k|}\geq\mu(n,\eps).\qedhere
\]
\end{proof}


\begin{rmk}
The lower bound $\mu\left(8r^2-2,\frac{\pi}{2r+1}\right)$ obtained in \cref{cor:bound-dhom} is $r^{-(8+o(1))r^2}$ as $r\to\infty$.
\end{rmk}

\section{The Discrete Fundamental Group}\label{s:topology}

Returning to our homotopy theory, we introduce a graph-theoretic analogue of the fundamental group. Using the discrete fundamental group, we develop an unexpected relationship between our homotopy theory and Lov\'asz's neighborhood complex.

We briefly recall the definition of the fundamental group in algebraic topology. Let $X$ be a path-connected topological space. A \emph{loop} based at a point $x_0\in X$ is a continuous map $L:[0,1]\to X$ with $L(0)=L(1)=x_0$. Two such loops are \emph{homotopic} if one loop may be continuously deformed into the other while keeping the basepoint $x_0$ fixed. Let $\pi_1(X,x_0)$ be the set of homotopy classes of loops in $X$ based at $x_0$. It is well known (see \cite[\S 1.1]{Ha02}) that $\pi_1(X,x_0)$ forms a group under concatenation, and that $\pi_1(X,x_0)\simeq\pi_1(X,x_1)$ for any two points $x_0,x_1\in X$. Due to this isomorphism, the fundamental group is often written simply as $\pi_1(X)$.

Let $G$ be a connected graph. Given a vertex $v_0\in V(G)$, the \emph{discrete fundamental group} $\pi_1(G,v_0)$ consists of those closed walks in $G$ with both endpoints at $v_0$, up to homotopy equivalence. This forms a group under concatenation, where the inverse of a walk $v_0v_1\cdots v_{k-1}v_0$ is given by $v_0v_{k-1}\cdots v_1v_0$. Let $\pieven(G,v_0)$ be the subgroup of $\pi_1(G,v_0)$ comprising closed walks of even length.

We state without proof two results which are proved analogously to elementary statements in algebraic algebraic topology.

\begin{prop}[See Proposition 1.5 in \cite{Ha02}]\label{prop:fg-basepoint-independent}
	Let $G$ be a connected graph and let $v_0$ and $v_1$ be vertices of $G$. There is an isomorphism from $\pi_1(G,v_0)$ to $\pi_1(G,v_1)$. Moreover, this isomorphism maps $\pieven(G,v_0)$ to $\pieven(G,v_1)$.
\end{prop}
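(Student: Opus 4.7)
The plan is to mirror the classical topological proof by conjugation with a fixed walk between the basepoints. Since $G$ is connected, I can fix any walk $W = v_0 w_1 \cdots w_{k-1} v_1$ from $v_0$ to $v_1$, and let $W^{-1} = v_1 w_{k-1} \cdots w_1 v_0$ denote its reversal. Define a map $\Phi_W \colon \pi_1(G, v_1) \to \pi_1(G, v_0)$ by $[C] \mapsto [W \cdot C \cdot W^{-1}]$, where $\cdot$ denotes concatenation of walks. This is well-defined on homotopy classes because any substitution, insertion, or deletion step converting $C$ to $C'$ can be performed verbatim inside the middle segment of $W \cdot C \cdot W^{-1}$, yielding $W \cdot C' \cdot W^{-1}$.

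The central ingredient is a short lemma asserting that the closed walk $W \cdot W^{-1}$ is homotopic to the trivial length-$0$ walk at $v_0$. I would prove this by iterated deletion: in the walk
\[
v_0 w_1 \cdots w_{k-1} v_1 w_{k-1} \cdots w_1 v_0,
\]
the centered subword $w_{k-1} v_1 w_{k-1}$ fits the deletion pattern $xyx$ and collapses to $w_{k-1}$; the new center $w_{k-2} w_{k-1} w_{k-2}$ collapses the same way, and by induction the entire walk reduces to $v_0$. I expect this brief induction to be the only non-formal step.

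Given the lemma, showing that $\Phi_W$ is a group homomorphism reduces to observing that the walks $W \cdot C_1 \cdot C_2 \cdot W^{-1}$ and $W \cdot C_1 \cdot W^{-1} \cdot W \cdot C_2 \cdot W^{-1}$ differ by an insertion of the null-homotopic walk $W^{-1} \cdot W$ (apply the lemma to $W^{-1}$ and invert). The analogous map $\Phi_{W^{-1}} \colon [D] \mapsto [W^{-1} \cdot D \cdot W]$ then serves as the inverse of $\Phi_W$: both compositions introduce a factor of $W \cdot W^{-1}$ or $W^{-1} \cdot W$ that is null-homotopic, leaving the original class unchanged.

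For the final claim about $\pieven$, I would simply note that $|W \cdot C \cdot W^{-1}| = 2|W| + |C|$ has the same parity as $|C|$, so both $\Phi_W$ and its inverse preserve the parity of a closed walk. Consequently they restrict to mutually inverse isomorphisms between $\pieven(G, v_0)$ and $\pieven(G, v_1)$, as desired.
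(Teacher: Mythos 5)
Your proof is correct and takes exactly the approach the paper intends: the paper states this result without proof, citing Hatcher's Proposition 1.5 and noting the argument is ``very similar'' to that of the paper's Proposition 2.7(i), which is precisely the conjugation-by-a-fixed-walk strategy you carry out (with the null-homotopy of $W\cdot W^{-1}$ via iterated deletion as the key lemma, and the parity observation handling the $\pieven$ claim).
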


We remark that the proof of \cref{prop:fg-basepoint-independent} is also very similar to that of \cref{cor:homt-loops}.
This proposition allows us to simply notate the discrete fundamental group as $\pi_1(G)$, and its subgroup of even walks as $\pieven(G)$. 

\begin{prop}[See Proposition 1.6 in \cite{Ha02}]\label{prop:disc-simp-conn}
	Let $G$ be a connected graph. $G$ is simply connected if and only if $\pieven(G)$ is trivial.
\end{prop}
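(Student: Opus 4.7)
The plan is to prove the two implications separately, both following closely the standard arguments from algebraic topology. The forward direction is essentially immediate from the definitions, while the reverse direction requires a short argument using concatenation.

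For the forward direction, suppose $G$ is simply connected, and let $[P] \in \pieven(G, v_0)$ for some fixed basepoint $v_0$. Then $P$ is a closed walk at $v_0$ of even length, as is the trivial length-0 walk $v_0$ itself. Since both walks have the same endpoints (namely $v_0$) and the same parity (even), \cref{dfn:simp-conn} gives that $P$ is homotopic to the trivial walk, so $[P]$ is the identity. Thus $\pieven(G, v_0)$ is trivial, and by \cref{prop:fg-basepoint-independent} this is independent of basepoint.

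For the reverse direction, suppose $\pieven(G)$ is trivial. Let $P = v_0 \cdots v_k$ and $Q = w_0 \cdots w_\ell$ be two walks in $G$ with $v_0 = w_0$, $v_k = w_\ell$, and $k \equiv \ell \pmod 2$. The plan is to form the closed walk $PQ^{-1} = v_0 \cdots v_k w_{\ell-1} \cdots w_0$ based at $v_0$, which has even length $k + \ell$. By hypothesis (and \cref{prop:fg-basepoint-independent} to transfer triviality to the basepoint $v_0$), this closed walk is homotopic to the trivial walk $v_0$ of length 0.

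To conclude that $P$ is homotopic to $Q$, I would invoke two routine lemmas about the behavior of homotopy under concatenation, both of which follow directly from \cref{dfn:homotopy}. First, homotopy respects concatenation: if $R_1$ and $R_2$ are homotopic walks from $a$ to $b$ and $S$ is any walk from $b$ to $c$, then $R_1 S$ and $R_2 S$ are homotopic, since the (Sub), (Ins), and (Del) operations performed on the prefix extend to operations on the concatenation. Second, for any walk $Q$ from $a$ to $b$, the concatenation $Q^{-1}Q$ is homotopic to the trivial walk at $b$, since one may repeatedly apply (Del) to the innermost triple $w_{i-1} w_i w_{i-1}$ (recalling the orientation of $Q^{-1}$) to peel away edges pair by pair until only a single vertex remains. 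Combining these, $P$ is homotopic to $P Q^{-1} Q$, which is homotopic to the trivial walk concatenated with $Q$, namely $Q$ itself. Hence $G$ is simply connected.

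I do not anticipate a genuine obstacle here; the only minor subtlety is keeping the basepoint bookkeeping consistent and verifying the two auxiliary lemmas on concatenation, which are analogous to the standard arguments in algebraic topology (cf.\ Proposition 1.3 of \cite{Ha02}) and follow mechanically from \cref{dfn:homotopy} without new ideas.
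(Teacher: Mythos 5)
Your proof is correct and follows exactly the standard algebraic-topology argument that the paper alludes to (referencing Hatcher's Proposition 1.6) but does not spell out. The forward direction is immediate, and the reverse direction via $PQ^{-1}Q$ together with the two auxiliary lemmas on concatenation (homotopy is a congruence for concatenation, and $Q^{-1}Q$ is null-homotopic by repeated deletion) is the intended argument; both lemmas do indeed transfer mechanically since each (Sub), (Ins), (Del) step on a prefix or suffix remains a legal step on the concatenation.
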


We conclude this section by demonstrating a connection between the discrete fundamental group and Lov\'asz's neighborhood complex. 
Given a graph $G$, its \emph{neighborhood complex} $\NN(G)$ is a simplicial complex on $V(G)$, with faces corresponding to those subsets $W\subseteq V(G)$ which have a common neighbor. For example, the neighborhood complex of the complete graph $K_n$ has a face corresponding to each proper subset of $[n]$, so $\NN(K_n)$ is the boundary of an $(n-1)$-dimensional simplex.

Notice that $\NN(G)$ is a connected topological space if and only if $G$ is connected and non-bipartite. In this regime, the fundamental group of $\NN(G)$ is in fact isomorphic to $\pieven(G)$. As a corollary, $G$ is a simply connected graph if and only if $\NN(G)$ is a simply connected topological space, i.e.\ if and only if $\pi_1(\NN(G))$ is trivial.

\begin{prop}\label{prop:pi1-NG}
Let $G$ be a connected non-bipartite graph. Then $\pi_1(\NN(G))\simeq \pieven(G)$.
\end{prop}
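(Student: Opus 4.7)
The plan is to construct a natural isomorphism $\Psi: \pieven(G, v_0) \to \pi_1(\NN(G), v_0)$ by sending the discrete-homotopy class of an even closed walk $W = v_0 v_1 v_2 \cdots v_{2k} v_0$ to the class of the edge-path loop $E(W) := (v_0, v_2, v_4, \ldots, v_{2k})$ in the $1$-skeleton of $\NN(G)$; this is a legal edge-path because $v_{2i}$ and $v_{2i+2}$ share the neighbor $v_{2i+1}$ in $G$. I would identify $\pi_1(\NN(G), v_0)$ with the classical edge-path group of the simplicial complex $\NN(G)$, whose generators are ordered pairs $(u, w)$ spanning a $1$-simplex and whose defining relation is the triangle relation $(u, v)(v, w) = (u, w)$ whenever $\{u, v, w\}$ has a common neighbor in $G$ (from which $(u, u) = 1$ and $(u, w)(w, u) = 1$ follow). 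Since $G$ is connected and non-bipartite, $\NN(G)$ is path-connected, so the basepoint choice is harmless.

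First I would verify that $\Psi$ is well-defined by examining how each discrete-homotopy move from \cref{dfn:homotopy} affects the even-indexed subsequence. A substitution at an odd index does not change the subsequence. A substitution at an even index $i$ swaps $v_i$ for $v_i' \in N(v_{i-1}) \cap N(v_{i+1})$; since the triples $\{v_{i-2}, v_i, v_i'\}$ and $\{v_i, v_i', v_{i+2}\}$ have common neighbors $v_{i-1}$ and $v_{i+1}$ respectively, both span $2$-simplices of $\NN(G)$, so two triangle relations give the needed edge-path equivalence. An insertion at an odd index inserts a new vertex $w$ in the even subsequence between $v_{i-1}$ and $v_{i+1}$, and $v_i$ is a common neighbor of $\{v_{i-1}, w, v_{i+1}\}$, giving a single triangle relation. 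An insertion at an even index merely duplicates a vertex in the edge-path, a trivial move; deletions are inverses of insertions. Because the definition is concatenation-preserving, $\Psi$ is automatically a group homomorphism. Surjectivity is immediate: for any edge-path loop $(u_0, u_1, \ldots, u_k)$ at $v_0$, selecting common neighbors $w_i \in N(u_i) \cap N(u_{i+1})$ yields the even closed walk $u_0 w_0 u_1 w_1 \cdots u_{k-1} w_{k-1} u_k$ mapping to the given loop under $\Psi$.

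The main obstacle is injectivity, which I would prove by lifting each defining edge-path relation to a discrete homotopy of even closed walks in $G$. The degeneracy relation $(u, u) = 1$ lifts to the homotopy $uwu \sim u$ for any $w \in N(u)$, via a single deletion. The triangle relation $(u, v)(v, w) = (u, w)$ is the crucial case: for any common neighbor $z$ of $\{u, v, w\}$ and any choices of intermediate common neighbors $z_1 \in N(u) \cap N(v)$, $z_2 \in N(v) \cap N(w)$, and $z_3 \in N(u) \cap N(w)$, I would verify the chain of moves $u z_1 v z_2 w \to u z v z_2 w \to u z v z w \to u z w z w \to u z w \to u z_3 w$, consisting of three substitutions, one deletion (valid because positions $1$ and $3$ both equal $z$), and a final substitution. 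Consequently, any null-homotopy of $E(W)$ in $\NN(G)$ — a finite sequence of edge-path relations applied to sub-edge-paths — lifts step-by-step to a discrete null-homotopy of $W$ itself. Combining this with \cref{prop:fg-basepoint-independent} to handle the basepoint yields the desired isomorphism $\pi_1(\NN(G)) \simeq \pieven(G)$.
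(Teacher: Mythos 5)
Your proposal is correct and follows essentially the same route as the paper: both send an even closed walk to its even-indexed subsequence viewed as an edge path in $\NN(G)$, check well-definedness move-by-move against \cref{dfn:homotopy}, get surjectivity by filling in common neighbors at the odd positions, and prove injectivity by lifting each elementary edge-path relation back to a discrete homotopy (your explicit chain for the triangle relation is the same computation the paper performs for its (Ins2) move). The only differences are cosmetic --- you use the generators-and-relations presentation of the edge-path group where the paper uses insertion/deletion moves on edge paths, and your walk $v_0v_1\cdots v_{2k}v_0$ has an off-by-one in the indexing (it has odd length as written) --- neither of which affects the substance.
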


The proof of \cref{prop:pi1-NG} uses the \emph{edge-path group}, an alternate representation of the fundamental group of a simplicial complex (see e.g.\ \cite[\S6.6]{Ad16}). Let $X$ be a simplicial complex on vertex set $V$. An \emph{edge path} of $X$ is a sequence $v_0\cdots v_k$ of vertices such that $v_i$ and $v_{i+1}$ are contained in the same simplex of $X$ for all $i<k$. We say two edge paths are \emph{equivalent} if we may transform one into the other via a finite sequence of steps of the following form:
\begin{homtenum}
\item[(Ins1)] Given an edge path $v_0\cdots v_k$ and an index $0\leq i\leq k$, replace $v_i$ with $v_iv_i$.
\item[(Del1)] Given an edge path $v_0\cdots v_k$ and an index $0\leq i<k$ satisfying $v_i=v_{i+1}$, replace $v_iv_{i+1}$ with $v_i$.
\item[(Ins2)] Given an edge path $v_0\cdots v_k$, an index $0\leq i<k$, and a vertex $v'$ such that $v_i,v',v_{i+1}$ are contained in the same simplex of $X$, replace $v_iv_{i+1}$ with $v_iv'v_{i+1}$.
\item[(Del2)] Given an edge path $v_0\cdots v_k$ and an index $0<i<k$ such that $v_i, v_{i+1}, v_{i+2}$ are contained in the same simplex of $X$, replace $v_iv_{i+1}v_{i+2}$ with $v_iv_{i+2}$.
\end{homtenum}
Given a vertex $v_0$ of $X$, the \emph{edge-path group} $\EE(X,v_0)$ consists of equivalence classes of edge paths starting and ending at $v_0$, with concatenation as the group operation. It is well known that $\EE(X,v_0)$ is isomorphic to $\pi_1(X)$. We prove \cref{prop:pi1-NG} by constructing an isomorphism between $\EE(\NN(G),v_0)$ and $\pieven(G,v_0)$ for some vertex $v_0\in V(G)$.

\newcommand{\pphi}{\overline{\phi}}
\begin{proof}[Proof of \cref{prop:pi1-NG}]
We define a map $\phi$ from \{even closed walks of $G$ based at $v_0$\} to \{edge paths in $\NN(G)$ based at $v_0$\}. We show that $\phi$ projects to a well-defined group homomorphism $\pphi:\pieven(G,v_0)\to\EE(\NN(G),v_0)$, and that $\pphi$ is an isomorphism.

Given an even closed walk $P=v_0\cdots v_{2k-1}v_0$, let $\phi(P)$ be the edge path $v_0v_2v_4\cdots v_{2k-2}v_0$. We show that $\phi$ maps homotopic closed walks to equivalent edge paths, by checking that the equivalence class of $\phi(P)$ is preserved by the three homotopy operations described in \cref{dfn:homotopy}.
\begin{homtenum}
\item[(Ins)]Let $P'=v_0\cdots v_iwv_i\cdots v_{2k-1}v_0$ be formed from $P$ by a single insertion. If $i$ is even, then $\phi(P')$ is obtained by replacing $v_i$ by $v_iv_i$ in $\phi(P)$, as in (Ins1). If $i$ is odd then $\phi(P')$ is obtained by replacing $v_{i-1}v_{i+1}$ by $v_{i-1}wv_{i+1}$ in $\phi(P)$. The vertices $v_{i-1}$, $w$, and $v_{i+1}$ have a common neighbor $v_i$, so they form a simplex of $\NN(G)$. Thus, $\phi(P')$ is equivalent to $\phi(P)$ by (Ins2).
\item[(Del)] If $P'$ is formed from $P$ by a single deletion step, then $P$ is formed from $P'$ by a single insertion step. By the above argument, $\phi(P)$ is equivalent to $\phi(P')$.
\item[(Sub)]Let $P'=v_0\cdots v_{i-1}v'_iv_{i+1}\cdots v_{2k-1}v_0$ be formed from $P$ by a single substitution. If $i$ is odd then $\phi(P')=\phi(P)$. If $i$ is even then $\phi(P')$ is formed by replacing $v_{i-2}v_iv_{i+2}$ by $v_{i-2}v'_iv_{i+2}$ in $\phi(P)$. Then $\phi(P)$ and $\phi(P')$ are equivalent by the two steps
\begin{align*}
\phi(P)=v_0\cdots v_{i-2}v_iv_{i+2}\cdots v_0
&\homtarrow{Ins2}
v_0\cdots v_{i-2}v'_iv_iv_{i+2}\cdots v_0
\\&\homtarrow{Del2}
v_0\cdots v_{i-2}v'_iv_{i+2}\cdots v_0=\phi(P'),
\end{align*}
which are permissible because $\{v_{i-2},v'_i,v_i\}$ and $\{v'_i,v_i,v_{i+2}\}$ are simplices of $\NN(G)$ with common neighbors $v_{i-1}$ and $v_{i+1}$, respectively.
\end{homtenum}
It follows that $\phi$ maps homotopic closed walks to equivalent edge paths, yielding a well-defined group homomorphism $\pphi:\pieven(G,v_0)\to\EE(\NN(G),v_0)$.

We claim $\pphi$ is surjective. Let $Q=v_0v_2v_4\cdots v_{2k-2}v_0$ be an edge path of $G$. Because $v_{2i}$ and $v_{2i+2}$ are contained in the same simplex of $\NN(G)$, they share a common neighbor $v_{2i+1}$. Thus, there is a closed walk $P=v_0v_1\cdots v_{2k-1}v_0$ with $\phi(P)=Q$.

Lastly, we prove that $\pphi$ is injective. We show that if $P$ and $P'$ are closed walks with $\phi(P)=Q$ equivalent to $\phi(P')=Q'$, then $P$ and $P'$ are homotopic. It suffices to consider the cases that $Q$ and $Q'$ are related by (Ins1) or (Ins2), as (Del1) and (Del2) are the respective inverse transformations. Write $Q=v_0\cdots v_{k-1}v_0$ and $P=v_0w_0\cdots v_{k-1}w_{k-1}v_0$.
\begin{homtenum}
\item[(Ins1)] Suppose $Q'=v_0\cdots v_iv_i\cdots v_{k-1}v_0$ is formed from $Q$ by (Ins1). Then $P'$ takes the form
\[P'=v_0w'_0v_1w'_1\cdots v_iw'_iv_iw''_iv_{i+1}w'_{i+1}\cdots v_{k-1}w'_{k-1}v_0.\]
Observe that $P$ is homotopic to the closed walk
\[
P''=v_0w'_0v_1w'_1\cdots v_iw'_iv_{i+1}w'_{i+1}\cdots v_{k-1}w'_{k-1}v_0
\]
via a sequence of substitutions and that $P'$ may be obtained from $P''$ by a single insertion. Thus, $P$ and $P'$ are homotopic closed walks.

\item[(Ins2)] Suppose $Q'=v_0\cdots v_iv'v_{i+1}\cdots v_{k-1}v_0$ is formed from $Q$ by (Ins2). Then $P'$ takes the form
\[P'=v_0w'_0v_1w'_1\cdots v_iw'_iv'w''_iv_{i+1}w'_{i+1}\cdots v_{k-1}w'_{k-1}v_0.\]
Because $v_i,v',v_{i+1}$ are in the same simplex of $\NN(G)$, and thus share a common neighbor $x$. By sequences of substitutions, $P$ is homotopic to the closed walk
\[
P''=v_0w'_0v_1w'_1\cdots v_ixv_{i+1}w'_{i+1}\cdots v_{k-1}w'_{k-1}v_0.
\]
and $P'$ is homotopic to
\[
P'''=v_0w'_0v_1w'_1\cdots v_ixv'xv_{i+1}w'_{i+1}\cdots v_{k-1}w'_{k-1}v_0.
\]
Because $P'''$ is obtained from $P''$ via a single deletion, the closed walks $P$ and $P'$ are homotopic.
\end{homtenum}
It follows that any closed walks $P$ and $P'$ with $\pphi(P)=\pphi(P')$ are homotopic. Hence, $\phi$ is injective, completing the proof that $\pphi$ is a group isomorphism.
\end{proof}

Combining \cref{prop:disc-simp-conn,prop:pi1-NG} yields another characterization of simply connected graphs which is equivalent to \cref{dfn:simp-conn} for non-bipartite graphs.

\begin{corollary}
	Suppose $G$ is a connected non-bipartite graph. Then, $G$ is simply connected if and only if $\pi_1(\NN(G))$ is trivial.
\end{corollary}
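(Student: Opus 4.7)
The plan is to observe that this corollary follows immediately by chaining together the two propositions just established, namely \cref{prop:disc-simp-conn} and \cref{prop:pi1-NG}. There is no substantive new content: the work has already been done in proving those two statements, and the corollary merely records their combination.

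More concretely, I would proceed in two short steps. First, I invoke \cref{prop:disc-simp-conn}, which does not require any hypothesis beyond connectedness and states that a connected graph $G$ is simply connected if and only if $\pieven(G)$ is the trivial group. Second, I invoke \cref{prop:pi1-NG}, whose hypothesis (that $G$ is connected and non-bipartite) matches exactly the hypothesis of the corollary, and which provides a group isomorphism $\pi_1(\NN(G)) \simeq \pieven(G)$. Since triviality of a group is preserved under isomorphism, $\pieven(G)$ is trivial if and only if $\pi_1(\NN(G))$ is trivial. Composing the two equivalences yields the desired biconditional.

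There is essentially no obstacle to this argument, as both ingredients are already in hand. The only thing to be careful about is checking that the hypotheses line up correctly: \cref{prop:pi1-NG} genuinely needs $G$ to be non-bipartite (otherwise $\NN(G)$ would be disconnected and the statement $\pi_1(\NN(G)) \simeq \pieven(G)$ would not make sense without specifying a component), while \cref{prop:disc-simp-conn} needs only connectedness. Both hypotheses are assumed in the corollary, so the chaining is immediate. The proof can therefore be written in two or three lines.
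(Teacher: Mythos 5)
Your proposal is correct and is exactly the paper's approach: the paper introduces the corollary with the remark ``Combining \cref{prop:disc-simp-conn,prop:pi1-NG} yields another characterization of simply connected graphs,'' which is precisely the two-step chaining you describe. Your additional remark about why non-bipartiteness is needed for \cref{prop:pi1-NG} (so that $\NN(G)$ is connected) is a sensible sanity check and consistent with the paper's discussion.
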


\section{Concluding Remarks}\label{s:conclusion}

We discuss possible extensions of \cref{mainthm}, as well as its limitations when used to lower-bound $\dhom(C_{2r+1})$. Lastly, we conclude with some open questions of a more topological flavor.

\subsection{Bounding $\dhom(C_{2r+1})$}

As mentioned in \cref{rmk:mainthm-tight}, our proof of \cref{mainthm} did not attempt to optimize the constant coefficient in the bound on $\chi(G)$. We conjecture that under the hypotheses of \cref{mainthm}, $\chi(G)$ is in fact at most $2r$.

\begin{conjecture}\label{conj:mainthm-strong}
	Let $G$ be a simply connected graph and fix $r\geq 2$. Suppose $\phi:G\to H$ is a graph homomorphism such that $H$ is $C_{2r+1}$-free, and suppose further that some odd cycle $C$ of $G$ satisfies $|\phi(V(C))|\leq 2r+2$. Then $\chi(G)\leq 2r$.
\end{conjecture}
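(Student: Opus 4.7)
The plan is to retain the high-level reduction from the proof of \cref{mainthm}---applying \cref{keylem1} to reduce to bounding $\chi(\ccl[H]{f})$ for an edge $f$ on a shortest odd cycle $C$ of the $C_{2r+1}$-free graph $H$ with $|C| \le 2r-1$---and to replace the coarse analysis of $\ccl[H]{f}$ with a sharp coordinated coloring. Since $2r \ge 4$ for $r \ge 2$, the conjecture reduces to proving $\chi(\ccl[H]{f}) \le 2r$.

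In the proof of \cref{mainthm}, the bound on $\chi(\ccl[H]{f})$ loses a factor of $4r$ from \cref{keylem3} and an additional factor of $|C|$ from the subsequent union over vertices of $C$ using disjoint color palettes. To achieve the sharp bound, I would seek a single $2r$-coloring of $\ccl[H]{f}$ whose palette rotates coherently around $C$. Write $C = c_0 c_1 \cdots c_{2s}$ with $2s+1 = |C| \le 2r-1$. The plan is to define $\gamma: V(\ccl[H]{f}) \to \Z/(2r)\Z$ by selecting, for each vertex $u$, a short odd cycle $X_u$ through some $c_{i(u)}$ as provided by \cref{keylem2-weak}, and reading off an element of $\Z/(2r)\Z$ from $i(u)$ together with the length of the shorter arc of $X_u$ between $u$ and $c_{i(u)}$. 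This is analogous to coloring an approximation of the Borsuk graph $\BG(2r-2,\pi/(2r+1))$ by geodesic distance coordinates.

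The main obstacle is showing that $\gamma$ is well-defined and proper. A single vertex $u$ can lie on several distinct short odd cycles through different vertices of $C$, and nothing in \cref{keylem2-weak} canonically singles one out. I expect the proof requires a refinement of \cref{keylem2-weak} guaranteeing that the short odd cycles through $u$ assemble into a consistent ``distance-mod-$(2r)$'' coordinate, perhaps by exhibiting additional $C_4$-connections between them. Once well-definedness is established, properness of $\gamma$ along an edge $uu'$ amounts to showing that combining $X_u$, $X_{u'}$, and $uu'$ cannot form a $C_{2r+1}$ in $H$---a finer use of the $C_{2r+1}$-free hypothesis than is made in \cref{keylem3}.

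An alternative that bypasses explicit coordinates is to construct a graph homomorphism $\ccl[H]{f} \to \Gamma$ for a small $2r$-chromatic $C_{2r+1}$-free target $\Gamma$ playing the role of a finite combinatorial model of $\BG(2r-2, \pi/(2r+1))$---say a generalized Mycielskian or an appropriate Kneser-like graph. However, the existence of such a $\Gamma$ together with the needed homomorphism appears at least as hard as the direct coloring approach, since both ultimately demand a structural description of $\ccl[H]{f}$ going beyond what Lemmas \ref{keylem2-weak} and \ref{keylem3} currently supply.
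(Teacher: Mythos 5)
The statement you are addressing is \cref{conj:mainthm-strong}, which the paper explicitly leaves as an open conjecture; there is no proof of it in the text to compare against. Your submission is, by your own framing, a research plan rather than a proof: you correctly reduce, via \cref{keylem1}, to showing $\chi\left(\ccl[H]{f}\right)\leq 2r$ for an edge $f$ on a short odd cycle $C$ of $H$, and you propose a $\mathbb Z/(2r)\mathbb Z$-valued ``rotating'' coloring $\gamma$ built from \cref{keylem2-weak}, but you then flag that both well-definedness and properness of $\gamma$ are unresolved. Those are not incidental details; they are the entire content of the conjecture, and the proposal does not close them.

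Concretely, \cref{keylem2-weak} only guarantees that each edge of $\ccl[H]{f}$ lies on \emph{some} odd cycle of length less than $2r+1$ meeting $C$. It provides no canonical choice of that cycle, no control over which vertex $c_{i(u)}$ of $C$ it passes through, and no compatibility between the cycles assigned to the two endpoints of an edge. Without a strengthened structural lemma forcing these assignments to cohere globally (say, by showing that any two such short odd cycles through a common vertex $u$ are linked by $C_4$'s in a way that preserves a distance-mod-$(2r)$ coordinate), the map $\gamma$ need not even be a function, and the properness check along an edge $uu'$ has no foothold. This is exactly the obstruction the paper acknowledges by settling for the $8r^2$ bound in \cref{mainthm} and stating the $2r$ bound only as \cref{conj:mainthm-strong}; your proposal identifies the right target and the right source of loss, but supplies no new lemma to overcome it, so it does not establish the conjecture.
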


We cannot guarantee $\chi(G)<2r$, as shown by the following construction. Let $G$ be a large simply-connected $C_{2r+1}$-free graph with $\chi(G)=2r$. Such a graph may be obtained by taking a sufficiently good approximation to a Borsuk graph $\BG\left(2r-2,\eps\right)$, with $\eps\leq\frac{\pi}{2r+1}$ chosen small enough that the graph $G$ has chromatic number exactly $2r$. The graph $G$, together with any homomorphism $\phi:G\to K_{2r}$, satisfies the hypotheses of \cref{mainthm}.

It would be interesting to understand how $\dhom(C_{2r+1})$ grows as a function of $r$. Even if we could show \cref{conj:mainthm-strong}, this would imply a lower bound of $\dhom(C_{2r+1})\geq r^{-(2+o(1))r}$, which remains far from the upper bound $\dhom(C_{2r+1})\leq\frac 1{2r+1}$ shown by Ebsen and Schacht \cite{EbSc20}. One specific question in this direction is to determine whether the best possible lower bound on $\dhom(C_{2r+1})$ is given by a sequence of approximations to a Borsuk graph, i.e.\ whether the exact value of $\dhom(C_{2r+1})$ takes a form similar to the lower bound of \cref{cor:bound-dhom}.
\begin{question}
	Fix $r\geq 2$ and set $\eps=\frac{\pi}{2r+1}$. Is $\dhom(C_{2r+1})=\mu(n,\eps)$ for some $n$? If so, is $n=2r-1$?
\end{question}

\subsection{Topological Open Problems}

We remark that \cref{mainthm} may be generalized from simply connected graphs to any graph $G$ with cyclic discrete fundamental group.

\begin{theorem}\label{mainthm-gen}
	Let $G$ be a graph with $\pi_1(G)$ cyclic and fix $r\geq 2$. Suppose $\phi:G\to H$ is a graph homomorphism such that $H$ is $C_{2r+1}$-free, and suppose further that some odd cycle $C$ of $G$ satisfies $|\phi(V(C))|\leq 2r+2$. Then $\chi(G)<8r^2$. 
\end{theorem}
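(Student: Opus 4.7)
The plan is to show that every step of the proof of \cref{mainthm} survives the weakening of ``$G$ simply connected'' to ``$\pi_1(G)$ cyclic.'' By \cref{prop:disc-simp-conn}, $G$ being simply connected is equivalent to $\pieven(G)$ being trivial, which is a special case of $\pi_1(G)$ cyclic, so this is genuinely a generalization. Tracing through the paper, simple connectedness enters only through \cref{keylem1}, whose proof uses \cref{lem:extend-coloring} and \cref{cor:bipartiteB}; both of these rest on \cref{cor:homt-loops}, namely the statement that for any intersection invariant $\I$ associated to $\phi$, the value $\I(C)$ is the same for every odd closed walk $C$ of $G$. Hence the real work is to reprove \cref{cor:homt-loops} assuming only that $\pi_1(G)$ is cyclic.

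The first step I would take is to observe that for any connected $G$, an intersection invariant $\I$ defines a group homomorphism $\bar\I : \pi_1(G, v_0) \to \mathbb Z/2\mathbb Z$: by \cref{cor:inv} it is constant on homotopy classes, and by \cref{prop:sum-inv} it is additive under concatenation. Moreover $\I$ on any odd closed walk based at a vertex $v$ is unchanged under conjugation by a walk from $v_0$ to $v$ (the two copies of the connecting walk contribute trivially by \cref{prop:sum-inv}), so it suffices to compare walks based at $v_0$.

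Next I would assume $\pi_1(G)$ is cyclic; since the hypothesis of \cref{mainthm-gen} furnishes an odd cycle, I may further assume $G$ is non-bipartite. The parity map $\pi_1(G, v_0) \to \mathbb Z/2\mathbb Z$ is then a surjective homomorphism with kernel $\pieven(G, v_0)$, which is therefore an index-2 subgroup. A cyclic group has at most one subgroup of each finite index, so $\pieven(G, v_0)$ is the \emph{unique} index-2 subgroup of $\pi_1(G, v_0)$. For any $\bar\I$ as above, $\ker \bar\I$ has index 1 or 2 and hence contains $\pieven(G, v_0)$; that is, $\bar\I$ factors through parity. Any two odd closed walks based at $v_0$ differ by an element of $\pieven(G, v_0)$, so they agree under $\I$. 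This is the needed generalization of \cref{cor:homt-loops}(i); part (ii) follows from (i) by exactly the same argument as in the paper.

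With this ingredient the rest of the chain carries over verbatim: \cref{lem:edge-on-short-odd} makes no topological hypothesis on $G$ to begin with, and the proofs of \cref{lem:extend-coloring} and \cref{keylem1} invoke homotopy invariance only through \cref{cor:inv} and \cref{cor:homt-loops}. The proof of \cref{mainthm} then yields $\chi(G) < 8r^2$ without further change. I would expect the main obstacle to be a careful audit confirming that no other step implicitly relies on simple connectedness beyond this single place, but the argument is sufficiently modular that the cyclic hypothesis slots in cleanly at exactly one point.
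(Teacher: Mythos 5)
Your proposal is correct and takes essentially the same route as the paper: both isolate the single use of simple connectedness in the proof of \cref{cor:homt-loops}(i) and replace it with the observation that, when $\pi_1(G)$ is cyclic, every intersection invariant factors through the parity homomorphism on $\pi_1(G,v_0)$, so all odd closed walks receive the same value. The paper phrases this concretely (an odd closed walk is homotopic to an odd power $P^a$ of a generator, whence $\I(C)=a\,\I(P)=\I(P)$) rather than via uniqueness of the index-2 subgroup, but the content is identical.
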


\begin{proof}
We follow the proof of \cref{mainthm}, with one exception. The condition that $G$ is simply connected is used exactly once, to show that $\I(C)=\I(C'')$ within the proof of \cref{cor:homt-loops}. We claim that this step still holds under the alternate hypothesis that $\pi_1(G)$ is cyclic.

Assume the proof setup of \cref{cor:homt-loops}. Let $P$ be a closed walk generating $\pi_1(G,v_0)$. Because $C$ is an odd closed walk based at $v_0$, there is $a\in\mathbb Z$ such that $C$ is homotopic to $P^a$, the closed walk formed by concatenating $a$ copies of $P$. Furthermore, because $C$ has odd length and walk parities are preserved by homotopy equivalence, it follows that $a$ must be odd. Hence, $\I(C)=\I\left(P^a\right)=a\times\I(P)=\I(P)\pmod 2$. Similarly, $\I(C'')=\I(P)$, yielding that $\I(C)=\I(C'')$. It follows that \cref{cor:homt-loops} holds in the more general case that $\pi_1(G)$ is cyclic. Thus, the proof of \cref{mainthm} generalizes to all graphs $G$ with $\pi_1(G)$ cyclic.	
\end{proof}

Because of the interplay between the discrete fundamental group and chromatic number in \cref{mainthm-gen}, it is natural to ask about thresholds with a more topological nature. Let $G$ be an $H$-free graph on $n$ vertices, with minimum degree $\del(G)\geq\al n$ for some fixed $\al$. It would be informative to understand the relationship between $G$ and various topological properties of $G$, such as the structure of $\NN(G)$ or $\pi(G)$. Similar topological properties have been studied for random models of graphs and hypergraphs \cite{ArLiLuMe13,Ka07,LiPe16}, but it is not clear how these properties would interact with the $H$-free condition. We ask about one such ``topological threshold'' explicitly, but any results of this type would be quite interesting.

\begin{question}
	Fix a graph $H$. Let $\dcyc(H)$ be the infimum of those $\al$ such that if $G$ is an $H$-free graph with $\delta(G)>\al|G|$, then $\pi_1(G)$ is cyclic. What is the value of $\dcyc(H)$?
\end{question}

At first glance, it seems more intuitive to ask for the least minimum bound on $\delta(G)$ which implies that $G$ is simply connected. However, being simply connected can be too restrictive a property. For example, it is well-known that triangle-free graphs $G$ with $\delta(G)>\frac 25|G|$ are bipartite. Conversely, if $G$ is a balanced blowup of $C_5$, then $\delta(G)=\frac 25|G|$ and $\pi_1(G)\simeq\mathbb Z$. Thus, determining the threshold $\al$ above which triangle-free graphs $G$ with $\del(G)>\al|G|$ are simply connected is fundamentally a problem about bipartite --- rather than triangle-free --- graphs. Moreover, the generalization of \cref{mainthm} to \cref{mainthm-gen} suggests that having cyclic discrete fundamental group is the more germane property.

Looking beyond the fundamental group, it would be valuable to formulate discrete analogues of higher homotopy groups or homology groups. Such constructions would be interesting in their own right, especially if they coincided on a subgroup with the corresponding homology or homotopy group of $\NN(G)$. It would also be illuminating to understand which properties of a graph $G$ --- such as its chromatic number --- could be determined by analyzing these groups.

\paragraph{Acknowledgements.} The author thanks Jacob Fox and Yuval Wigderson for helpful comments.

\end{document}